\title{A Birman exact sequence for the Torelli subgroup of $\Aut(F_n)$}
\author{Matthew Day\thanks{Supported in part by NSF grant DMS-1206981}\ \ and Andrew Putman\thanks{Supported in part by NSF grant DMS-1255350 and the Alfred P.\ Sloan Foundation}}
\date{March 30, 2016}
\newcommand{\arxiv}[1]{\href{http://arxiv.org/abs/#1}{{\tt arXiv:#1}}}
\theoremstyle{plain}
\newtheorem{theorem}{Theorem}[section]
\newtheorem{maintheorem}{Theorem}
\newtheorem{lemma}[theorem]{Lemma}
\newtheorem{claimsa}{Claim}
\newtheorem{claimsb}{Claim}
\theoremstyle{definition}
\theoremstyle{remark}
\newtheorem{remark}[theorem]{Remark}
\newtheorem*{example}{Example}
\DeclareMathOperator{\Hom}{Hom}
\DeclareMathOperator{\IA}{IA}
\DeclareMathOperator{\GL}{GL}
\newcommand\Z{\ensuremath{\mathbb{Z}}}
\newcommand\Q{\ensuremath{\mathbb{Q}}}
\DeclareMathOperator{\HH}{H}
\DeclareMathOperator{\Aut}{Aut}
\DeclareMathOperator{\End}{End}
\DeclareMathOperator{\Out}{Out}
\newcommand\Set[2]{\ensuremath{\{\text{#1 $|$ #2}\}}}
\newcommand\Pres[2]{\ensuremath{\langle \text{$#1$ $|$ $#2$} \rangle}}
\newcommand\LPres[3]{\ensuremath{\langle \text{$#1$ $|$ $#2$ $|$ $#3$} \rangle}}
\newcommand\AutFB[2]{\ensuremath{\mathcal{A}_{#1,#2}}}
\newcommand\IAB[2]{\ensuremath{\mathcal{A}_{#1,#2}^{\text{IA}}}}
\newcommand\BKer[2]{\ensuremath{\mathcal{K}_{#1,#2}}}
\newcommand\BKerIA[2]{\ensuremath{\mathcal{K}^{\text{IA}}_{#1,#2}}}
\newcommand\One{\ensuremath{\mathbbm{1}}}
\newcommand\GroupPres[2]{\ensuremath{\langle \text{$#1$ $|$ $#2$} \rangle}}
\newcommand\Conj[1]{\ensuremath{\ldbrack #1 \rdbrack}}
\newcommand\Mul[2]{\ensuremath{M_{#1,#2}}}
\newcommand\Mulcomm[3]{\ensuremath{M_{#1,[#2,#3]}}}
\newcommand\Con[2]{\ensuremath{C_{#1,#2}}}
\newcommand\Swap[2]{\ensuremath{P_{#1,#2}}}
\newcommand\Inv[1]{\ensuremath{I_{#1}}}
\newcommand\kMul[2]{\ensuremath{\mathfrak{M}_{#1,#2}}}
\newcommand\kMulcomm[3]{\ensuremath{\mathfrak{M}_{#1,[#2,#3]}}}
\newcommand\kCon[2]{\ensuremath{\mathfrak{C}_{#1,#2}}}
\newcommand\kSwap[2]{\ensuremath{\mathfrak{P}_{#1,#2}}}
\newcommand\kInv[1]{\ensuremath{\mathfrak{I}_{#1}}}
\newcommand\hf{\ensuremath{\widehat{f}}}
\newcommand\hs{\ensuremath{\widehat{s}}}
\newcommand\hw{\ensuremath{\widehat{w}}}
\newcommand\hu{\ensuremath{\widehat{u}}}
\newcommand\hv{\ensuremath{\widehat{v}}}
\newcommand\co{\colon}
\newcommand{\fm}[1]{{#1}^*}
\newcommand{\zasd}{\Z^n\rtimes\Aut(F_n)}
\newcommand{\zasdr}{\Z^n\rtimes_r \Aut(F_n)}
\newcommand\AB[2]{\ensuremath{\prescript{#1}{}{#2}}}
\newcommand\AK[2]{\ensuremath{\alpha_{#1}\left(#2\right)}}
\newcommand\BK[2]{\ensuremath{\beta_{#1}\left(#2\right)}}
\newcommand\tlambda{\ensuremath{\widetilde{\lambda}}}
\begin{document}

\maketitle

\begin{abstract}
We develop an analogue of the Birman exact sequence for the Torelli subgroup of $\Aut(F_n)$.
This builds on earlier work of the authors who studied an analogue of the
Birman exact sequence for the entire group $\Aut(F_n)$.  These results play
an important role in the authors' recent work on the second homology group of the Torelli group.
\end{abstract}

\section{Introduction}

The Birman exact sequence \cite{BirmanSeq, FarbMargalitPrimer} is a fundamental result
that relates the mapping class groups of surfaces with differing numbers of boundary components.
It is frequently used to understand the stabilizers in the mapping class group of
simple closed curves on a surface.  In \cite{DayPutmanBirman}, the authors constructed
an analogous exact sequence for the automorphism group $\Aut(F_n)$ of the free group $F_n$
on $n$ letters $\{x_1,\ldots,x_n\}$.  The {\em Torelli subgroup} of $\Aut(F_n)$, denoted $\IA_n$, is the kernel
of the map $\Aut(F_n) \rightarrow \GL_n(\Z)$ obtained from the action of $\Aut(F_n)$ on
$F_n^{\text{ab}} \cong \Z^n$.  In this paper, we construct a version of the Birman exact
sequence for $\IA_n$.  This new exact sequence plays a key role in our recent paper \cite{DayPutmanH2}
on $\HH_2(\IA_n;\Z)$.

\paragraph{Birman exact sequence.}
Let $F_{n,k}$ be the free group on the set $\{x_1,\ldots,x_n,y_1,\ldots,y_k\}$.  For $z \in F_{n,k}$, let
$\Conj{z}$ denote the conjugacy class of $z$.  Define
\[\AutFB{n}{k} = \Set{$f \in \Aut(F_{n,k})$}{$\Conj{f(y_i)} = \Conj{y_i}$ for $1 \leq i \leq k$}.\]
The map $F_{n,k} \rightarrow F_n$ whose kernel is the normal closure of $\{y_1,\ldots,y_k\}$ induces
a map $\pi\co\AutFB{n}{k} \rightarrow \Aut(F_n)$.  The inclusion $\Aut(F_n) \hookrightarrow \AutFB{n}{k}$
whose image consists of automorphisms that fix the $y_i$ pointwise is a right inverse for $\pi$, so
$\pi$ is a split surjection.  Let $\BKer{n}{k} = \ker(\pi)$, so we have a split short exact sequence
\begin{equation}
\label{eqn:besaut}
1 \longrightarrow \BKer{n}{k} \longrightarrow \AutFB{n}{k} \stackrel{\pi}{\longrightarrow} \Aut(F_n) \longrightarrow 1.
\end{equation}
This is the Birman exact sequence for $\Aut(F_n)$ that was studied in \cite{DayPutmanBirman}.
In that paper, the authors
proved that $\BKer{n}{k}$ is finitely generated but not finitely presentable, constructed a simple
infinite presentation for it, and computed its abelianization.  
We
say more about these results below.

\begin{remark}
In \cite{DayPutmanBirman}, slightly more general groups $\mathcal{A}_{n,k,l}$ and $\mathcal{K}_{n,k,l}$ were studied.
To simplify our exposition, we decided to focus on the case $l=0$ in this paper.
\end{remark}

\paragraph{Analogue for Torelli.}
Define $\IA_{n,k}$ to be the Torelli subgroup of $\Aut(F_{n,k})$.  Set
$\IAB{n}{k} = \AutFB{n}{k} \cap \IA_{n,k}$ and $\BKerIA{n}{k} = \BKer{n}{k} \cap \IA_{n,k}$.  The
exact sequence \eqref{eqn:besaut} restricts to a split short exact sequence
\begin{equation}
\label{eqn:besia}
1 \longrightarrow \BKerIA{n}{k} \longrightarrow \IAB{n}{k} \longrightarrow \IA_n \longrightarrow 1.
\end{equation}
This is our Birman exact sequence for $\IA_n$.  The purpose of this paper is to prove
results for $\BKerIA{n}{k}$ that are analogous to the results for $\BKer{n}{k}$ that we listed above.

\paragraph{Comparing the kernels.}
The group $\BKerIA{n}{k}$ is the kernel of the restriction of the map $\Aut(F_{n,k}) \rightarrow \GL_n(\Z)$
to $\BKer{n}{k}$.  The image of this restriction is isomorphic to $\Z^{nk}$.  Indeed, using the
generating set for $\BKer{n}{k}$ constructed in \cite{DayPutmanBirman} (see below), one can show that with respect
to the basis $\{[x_1],\ldots,[x_n],[y_1],\ldots,[x_k]\}$
for $\Z^{n+k}$, it consists of matrices of the form
\[\left(\begin{matrix} \One_n & 0 \\ A & \One_k \end{matrix}\right),\]
where $\One_n$ and $\One_k$ are the $n \times n$ and $k \times k$ identity matrices and 
$A$ is an arbitrary $k \times n$ integer matrix.  We thus
have a short exact sequence
\begin{equation}
\label{eqn:bkeriabker}
1 \longrightarrow \BKerIA{n}{k} \longrightarrow \BKer{n}{k} \longrightarrow \Z^{nk} \longrightarrow 1.
\end{equation}
Unfortunately, 
it is difficult to use this exact sequence to deduce results about the combinatorial
group theory of $\BKerIA{n}{k}$ from analogous results for $\BKer{n}{k}$ (although in a sense we do this in the proof of Theorem~\ref{maintheorem:notfp} below).
For instance, 
the authors proved in \cite{DayPutmanBirman} that $\BKer{n}{k}$ is finitely generated,
but this does not directly imply anything about generating sets for $\BKerIA{n}{k}$.

\paragraph{Generators.}
We now turn to our theorems.  Set $X = \{x_1,\ldots,x_n\}$ and $Y = \{y_1,\ldots,y_k\}$.
For distinct $z,z' \in X \cup Y$, define $\Con{z}{z'} \in \Aut(F_{n,k})$
via the formula
\[\Con{z}{z'}(s) = \begin{cases}
z's(z')^{-1} & \text{if $s=z$},\\
s & \text{otherwise}.\end{cases}
\quad \quad (s \in X \cup Y).\]
Also, for $z \in X \cup Y$ and $\alpha = \pm 1$ and $v \in F_{n,k}$ in
the subgroup generated by $(X \cup Y) \setminus \{z\}$, define
$\Mul{z^{\alpha}}{v} \in \Aut(F_{n,k})$ via the formula
\[\Mul{z^{\alpha}}{v}(s) = \begin{cases}
v s & \text{if $s = z$ and $\alpha = 1$},\\
s v^{-1} & \text{if $s = z$ and $\alpha = -1$},\\
s & \text{otherwise.}\end{cases}
\quad \quad (s \in X \cup Y).\]
Observe that with this definition we have
$\Mul{z^{\alpha}}{v}(z^{\alpha}) = v z^{\alpha}$.
The authors proved in \cite{DayPutmanBirman} that $\BKer{n}{k}$ is generated by the finite set
\begin{equation}
\label{eqn:bkergen}
\Set{$\Mul{x}{y}$}{$x \in X$, $y \in Y$} \cup
\Set{$\Con{y}{z}$, $\Con{z}{y}$}{$y \in Y$, $z \in (X \cup Y) \setminus \{y\}$}.
\end{equation}

\begin{remark}
This is a little different from the generating set given in \cite{DayPutmanBirman}, which includes
generators of the form $\Mul{x^{-1}}{y}$ for $x \in X$ and $y \in Y$; however, these are
unnecessary here since $\Mul{x^{-1}}{y} = \Mul{x}{y} \Con{x}{y}^{-1}$.  The generators
$\Con{x}{y}$ were not included in the generating set in \cite{DayPutmanBirman}.  We give
the above form because it is a little more convenient for our purposes.
\end{remark}

The analogue of this for $\BKerIA{n}{k}$ is as follows.

\begin{maintheorem}
\label{maintheorem:generators}
The group $\BKerIA{n}{k}$ is generated by the finite set
\[\Set{$\Mul{x}{[y,z]}$}{$x \in X$, $y \in Y$, $z \in (X \cup Y) \setminus \{x,y\}$}
\cup \Set{$\Con{y}{z}$, $\Con{z}{y}$}{$y \in Y$, $z \in (X \cup Y) \setminus \{y\}$}.\]
\end{maintheorem}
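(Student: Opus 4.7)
Let $H$ denote the subgroup of $\BKer{n}{k}$ generated by the finite set displayed in the theorem; we must show $H = \BKerIA{n}{k}$. The containment $H \subseteq \BKerIA{n}{k}$ is immediate: each generator of the form $\Con{y}{z}$ or $\Con{z}{y}$ is an inner automorphism and thus acts trivially on $F_{n,k}^{\mathrm{ab}}$, while $\Mul{x}{[y,z]}$ sends $[x]$ to $[[y,z]] + [x] = [x]$ since $[y,z]$ vanishes in the abelianization.

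For the reverse containment, the plan is to analyze the quotient $\BKer{n}{k}/H$. Since $\BKer{n}{k}$ is generated by the set in \eqref{eqn:bkergen} and all the conjugation generators in that set lie in $H$, the images of the $\Mul{x}{y}$ (for $x \in X$, $y \in Y$) generate $\BKer{n}{k}/H$. Once we establish that (i) $H$ is normal in $\BKer{n}{k}$ and (ii) $\BKer{n}{k}/H$ is abelian, we will conclude as follows: $\BKer{n}{k}/H$ is an abelian group on at most $nk$ generators whose images in $\BKer{n}{k}/\BKerIA{n}{k} \cong \Z^{nk}$ form a basis; this forces the surjection $\BKer{n}{k}/H \twoheadrightarrow \Z^{nk}$ to be an isomorphism, and therefore $H = \BKerIA{n}{k}$.

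Both (i) and (ii) reduce to verifying that commutators of pairs of generators of $\BKer{n}{k}$ lie in $H$. The commutators of two conjugation generators lie in $H$ trivially. The commutators $[\Mul{x}{y}, \Mul{x'}{y'}]$ are computed directly: they equal $1$ when $x \neq x'$ and equal $\Mul{x}{[y, y']}$ (which is one of our generators) when $x = x'$. The remaining mixed commutators $[\Con{y}{z}, \Mul{x'}{y'}]$ and $[\Con{z}{y}, \Mul{x'}{y'}]$ require a case analysis on whether the subscripts of the conjugation factor coincide with $x'$ or $y'$: in the generic case the commutator is trivial, and in the coincidence cases the result is either another conjugation generator (or a product of such) or an element of the form $\Mul{x'}{[y_1^{\pm 1}, y_2^{\pm 1}]}$. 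Variants with inverse exponents are brought into the subgroup generated by our set via the identity $\Con{x}{y} \cdot \Mul{x}{v} \cdot \Con{x}{y}^{-1} = \Mul{x}{y^{-1} v y}$, which applied to $v = [y, z]$ yields $\Mul{x}{[y^{-1}, z]^{-1}}$; this allows one to toggle signs inside the commutator subscript by conjugating by $\Con{x}{y}$, itself one of our generators.

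The main obstacle is the bookkeeping in this commutator case analysis and the verification that $H$ is closed under conjugation by each $\Mul{x}{y}$. For the latter, we use the identity $a b a^{-1} = b \cdot [b^{-1}, a]$ to rewrite $\Mul{x}{y} \cdot g \cdot \Mul{x}{y}^{-1}$ as $g$ times an element that lies in $H$ by the commutator analysis, for each generator $g$ of $H$. Once this is in hand, normality of $H$ in $\BKer{n}{k}$ follows immediately, as does the abelianness of $\BKer{n}{k}/H$, and the proof concludes via the quotient-comparison argument in the second paragraph.
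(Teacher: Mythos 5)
Your overall strategy matches the paper's: both arguments reduce the theorem to the observation that the displayed set $T$ normally generates $\BKerIA{n}{k}$ (via the exact sequence \eqref{eqn:bkeriabker} and the known generating set \eqref{eqn:bkergen} for $\BKer{n}{k}$), plus the verification that the subgroup $H = \langle T\rangle$ is actually normal in $\BKer{n}{k}$. Your packaging is slightly different — you deduce $H = \BKerIA{n}{k}$ by showing $\BKer{n}{k}/H$ is abelian on $nk$ generators surjecting onto $\Z^{nk}$, whereas the paper directly observes that $T$ contains a set that normally generates $\BKerIA{n}{k}$ — but these are equivalent repackagings and both ultimately hinge on the same finite check.

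The gap is that this finite check is not carried out. The crux of the proof is verifying that $s t s^{-1}$ and $s^{-1} t s$ lie in $H$ for every $s = \Mul{x_a}{y_d}$ and every generator $t \in T$; the paper does this with explicit formulas (Table \ref{table:gisnormal}, eleven rows of identities). Your proposal flags this as ``the main obstacle,'' correctly identifies the helpful identity $\Con{x}{y}\,\Mul{x}{v}\,\Con{x}{y}^{-1} = \Mul{x}{y^{-1}vy}$ for toggling exponent signs, and describes the case structure — but it does not actually produce the rewriting formulas. Several of these are genuinely nontrivial (e.g.\ the paper's formula for conjugating $\Con{y_d}{x_b}$ or $\Mul{x_b}{[y_e,x_a]}$ by $\Mul{x_a}{y_d}$ involves products of four or more generators and is not immediate from the sign-toggling identity alone). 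Without these computations the proof is a plan, not a proof; the normality of $H$, which is the load-bearing step, remains unverified. Everything else in your argument is correct and would go through once those identities are supplied.
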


\paragraph{The Torelli kernel is not finitely presentable.}
Though $\BKer{n}{k}$ is finitely generated, the authors proved in \cite{DayPutmanBirman} that it is not
finitely presentable if $n \geq 2$ and $k \geq 1$; in fact, $\HH_2(\BKer{n}{k};\Q)$ is infinite dimensional.  
If $\BKerIA{n}{k}$ were finitely presentable, then one could use the exact sequence \eqref{eqn:bkeriabker}
to build a finite presentation for $\BKer{n}{k}$.  We deduce that $\BKerIA{n}{k}$ is not finitely presentable.
Our second main theorem strengthens this observation.

\begin{maintheorem}
\label{maintheorem:notfp}
If $n \geq 2$ and $k \geq 1$, then $\HH_2(\BKerIA{n}{k};\Q)$ is infinite dimensional.  Consequently,
$\BKerIA{n}{k}$ is not finitely presentable.
\end{maintheorem}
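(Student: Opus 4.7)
The plan is to exploit the short exact sequence \eqref{eqn:bkeriabker} via the Lyndon--Hochschild--Serre spectral sequence
\[E^2_{p,q} = \HH_p(\Z^{nk};\HH_q(\BKerIA{n}{k};\Q)) \Longrightarrow \HH_{p+q}(\BKer{n}{k};\Q),\]
pushing the infinite-dimensionality of $\HH_2(\BKer{n}{k};\Q)$ (known from \cite{DayPutmanBirman}) down onto $\HH_2(\BKerIA{n}{k};\Q)$.

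The first step is to invoke Theorem~\ref{maintheorem:generators}, which gives that $\BKerIA{n}{k}$ is finitely generated; hence $\HH_1(\BKerIA{n}{k};\Q)$ is a finite-dimensional $\Q$-vector space. Next, I would observe that since $\Z^{nk}$ admits the $nk$-torus as a finite classifying space, for any finite-dimensional $\Q[\Z^{nk}]$-module $V$ the homology $\HH_p(\Z^{nk};V)$ is finite dimensional (it is computed as the homology of a finite complex of f.g.\ free $\Q[\Z^{nk}]$-modules tensored with $V$). Applying this to $V = \Q$ and to $V = \HH_1(\BKerIA{n}{k};\Q)$, both of the entries
\[E^2_{2,0} = \HH_2(\Z^{nk};\Q) \quad \text{and} \quad E^2_{1,1} = \HH_1(\Z^{nk};\HH_1(\BKerIA{n}{k};\Q))\]
are finite dimensional over $\Q$.

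The third step is the deduction: since $\HH_2(\BKer{n}{k};\Q)$ is infinite dimensional and it has a finite filtration whose associated graded pieces are subquotients of $E^2_{0,2}$, $E^2_{1,1}$, and $E^2_{2,0}$, the only slot that can be infinite dimensional is $E^2_{0,2} = \HH_0(\Z^{nk};\HH_2(\BKerIA{n}{k};\Q))$, which is the coinvariant quotient $\HH_2(\BKerIA{n}{k};\Q)_{\Z^{nk}}$. As a quotient of $\HH_2(\BKerIA{n}{k};\Q)$, its infinite dimensionality forces infinite dimensionality of $\HH_2(\BKerIA{n}{k};\Q)$ itself.

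For the consequence that $\BKerIA{n}{k}$ is not finitely presentable, I would use the standard fact that an extension of a finitely presented group by a finitely presented group is finitely presented: if $\BKerIA{n}{k}$ were finitely presentable, then together with $\Z^{nk}$, the sequence \eqref{eqn:bkeriabker} would make $\BKer{n}{k}$ finitely presentable, contradicting \cite{DayPutmanBirman}. There is really no main obstacle here beyond the two inputs (Theorem~\ref{maintheorem:generators} and the $\HH_2$ computation of \cite{DayPutmanBirman}); the spectral sequence step is entirely formal once finite generation of $\BKerIA{n}{k}$ is in hand, which is precisely the order in which the theorems are stated.
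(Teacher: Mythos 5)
Your proof is correct and follows the same route as the paper: the Lyndon--Hochschild--Serre spectral sequence for the extension \eqref{eqn:bkeriabker}, using Theorem~\ref{maintheorem:generators} to bound $E^2_{1,1}$ and $E^2_{2,0}$ and thereby isolate the infinite-dimensionality in $E^2_{0,2}$. The only cosmetic difference is that you justify the final ``Consequently'' via the fact that an extension of finitely presented groups is finitely presented (an argument the paper mentions in its introduction), whereas one could also conclude directly from the infinite-dimensionality of $\HH_2(\BKerIA{n}{k};\Q)$.
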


\paragraph{Abelianization.}
The authors proved in \cite{DayPutmanBirman} that
\[\HH_1(\BKer{n}{k}) = \begin{cases}
\Z^{k(k-1)} & \text{if $n=0$},\\
\Z^{2kn} & \text{if $n>0$}.\end{cases}\]
These abelian quotients of $\BKer{n}{k}$ come from two sources.
\begin{compactitem}
\item The restriction of the map $\Aut(F_{n,k}) \rightarrow \GL_{n+k}(\Z)$ to $\BKer{n}{k}$, which
has image $\Z^{kn}$ (see the exact sequence \eqref{eqn:bkeriabker}).
\item The Johnson homomorphisms, which are homomorphisms 
\[\tau\co \IA_{n,k} \rightarrow \Hom(\Z^{n+k},\textstyle{\bigwedge^2} \Z^{n+k})\]
constructed from the action of $\IA_{n,k}$ on the second nilpotent truncation of $F_{n,k}$ (see
\S \ref{section:abelianization} below).  If $n=0$, then $\BKer{n}{k} \subset \IA_{n,k}$ and the restriction
of $\tau$ to $\BKer{n}{k}$ has image $\Z^{k(k-1)}$; this provides the entire abelianization.  If $n>0$, then
$\BKer{n}{k}$ does not lie in $\IA_{n,k}$ and we cannot use the Johnson homomorphism directly; however, in
\cite{DayPutmanBirman} we construct a modified version of it which is defined on $\BKer{n}{k}$ and has image
$\Z^{kn}$.
\end{compactitem}
The analogue of these calculations for $\BKerIA{n}{k}$ is as follows.

\begin{maintheorem}
\label{maintheorem:abelianization}
The group $\HH_1(\BKerIA{n}{k};\Z)$ is free abelian of rank
\[n(n-1)k +n \binom{k}{2} +2nk +k(k-1).\]
The abelianization map is given by the restriction
of the Johnson homomorphism to $\BKerIA{n}{k}$.
\end{maintheorem}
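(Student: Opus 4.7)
The plan is to show that the restriction of the Johnson homomorphism $\tau\co \IA_{n,k} \to \Hom(\Z^{n+k}, \bigwedge^2 \Z^{n+k})$ to $\BKerIA{n}{k}$ is already the abelianization map. Since the target of $\tau$ is torsion-free, $\tau|_{\BKerIA{n}{k}}$ factors through $\HH_1(\BKerIA{n}{k};\Z)$, and Theorem~\ref{maintheorem:generators} supplies a finite generating set of size exactly the claimed rank. It therefore suffices to show that the $\tau$-images of these generators are $\Z$-linearly independent; this will yield both rank bounds simultaneously, the torsion-freeness of $\HH_1$, and the identification of the abelianization map.

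First I would compute $\tau$ on each generator via the standard formula $\tau(\phi)(w) = \phi(w) w^{-1}$ modulo $\gamma_3(F_{n,k})$, under the identification of $[F_{n,k},F_{n,k}]/\gamma_3(F_{n,k})$ with $\bigwedge^2 \Z^{n+k}$ sending $[u,v]$ to $[u]\wedge[v]$. Direct calculation gives $\tau(\Mul{x}{[y,z]})\co x \mapsto [y]\wedge[z]$; $\tau(\Con{y}{z})\co y \mapsto [z]\wedge[y]$ for $y\in Y$; and $\tau(\Con{z}{y})\co z \mapsto [y]\wedge[z]$ for $z\in X$, $y\in Y$, with every other basis vector going to $0$ in each case. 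Since each generator's image is supported on a single basis vector $w \in X\cup Y$, linear independence can be checked one $w$ at a time under the decomposition $\Hom(\Z^{n+k},\bigwedge^2\Z^{n+k}) \cong \bigoplus_{w} \bigwedge^2 \Z^{n+k}$.

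For a fixed $x\in X$, the generators with nonzero image at $x$ are the $\Mul{x}{[y,z]}$ and the $\Con{x}{y}$; their $\tau$-components at $x$ range over the wedges $[y]\wedge[v]$ with $y\in Y$ and $v\in(X\cup Y)\setminus\{y\}$, each appearing (up to sign) exactly once once one accounts for the duplication $\Mul{x}{[y,z]} = \Mul{x}{[z,y]}^{-1}$ when $y,z\in Y$. There are $kn+\binom{k}{2}$ distinct such wedges, $\Z$-linearly independent in $\bigwedge^2\Z^{n+k}$. For a fixed $y\in Y$, the generators $\Con{y}{z}$ yield the $n+k-1$ distinct wedges $[z]\wedge[y]$ for $z\in(X\cup Y)\setminus\{y\}$, again linearly independent. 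Summing
\[n\bigl(kn+\tbinom{k}{2}\bigr) + k(n+k-1) = n(n-1)k + n\tbinom{k}{2} + 2nk + k(k-1)\]
recovers the claimed rank.

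The main technical obstacle is the bookkeeping above: keeping track of which wedges arise from which generators, handling the antisymmetry $[y,z]=-[z,y]$, and verifying that no wedge is double-counted across the two families $\Con{y}{z}$ and $\Con{z}{y}$. No additional relations need to be exhibited, because the upper bound on the rank is immediate from Theorem~\ref{maintheorem:generators} and the lower bound is purely the linear-independence computation; the two bounds coincide, forcing the generators of Theorem~\ref{maintheorem:generators} to be a free basis of $\HH_1(\BKerIA{n}{k};\Z)$ and $\tau$ to be the abelianization map onto its image.
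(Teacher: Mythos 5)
Your proposal is correct and follows essentially the same strategy as the paper: compute the Johnson homomorphism on the generators from Theorem~\ref{maintheorem:generators}, observe that the images land in a free abelian group and are linearly independent after the obvious reduction, and conclude that $\tau$ realizes the abelianization with the claimed rank. One small imprecision: your opening sentence asserts that the generating set of Theorem~\ref{maintheorem:generators} has ``size exactly the claimed rank,'' but that set, read literally, contains both $\Mul{x}{[y_a,y_b]}$ and $\Mul{x}{[y_b,y_a]}$ for distinct $y_a,y_b\in Y$, and so is larger by $n\binom{k}{2}$ than the rank; your later bookkeeping (``each appearing up to sign exactly once once one accounts for the duplication'') handles this, and the paper does the same thing by explicitly introducing a reduced generating set $T$ before applying $\tau$. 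You might state the reduced set up front for clarity, but the mathematics matches.
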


\begin{remark}
Theorem \ref{maintheorem:abelianization} is related to the fact that the Johnson homomorphism gives
the abelianization of $\IA_{n,k}$, a theorem which was proved independently by
Farb \cite{FarbAbel}, Cohen--Pakianathan \cite{CohenPakianathan}, and Kawazumi \cite{Kawazumi}.
\end{remark}

\paragraph{Finite L-presentation.}
Our final theorem gives an infinite presentation for the group $\BKerIA{n}{1}$ by generators and relations.  
Though this result may appear technical,
it is actually the most important theorem in this paper for our study in \cite{DayPutmanH2} of $\HH_2(\IA_n;\Z)$.
To simplify our notation, we will
write the generators of $F_{n,1}$ as $\{x_1,\ldots,x_n,y\}$.  The generators for our presentation
will be the finite set
\[S_K\coloneq\Set{$\Mul{x^{\alpha}}{[y^{\beta},z^{\gamma}]}$}{$x \in X$, $z \in X \setminus \{x\}, \alpha,\beta,\gamma = \pm 1$}
\cup \Set{$\Con{y}{x}$, $\Con{x}{y}$}{$x \in X$}.\]
This is larger than the generating set given by Theorem \ref{maintheorem:generators}; using
$S_K$ will simplify our relations.
By Theorem \ref{maintheorem:notfp}, the set of relations will have to be infinite.  They will be
generated from a finite list of relations by a simple recursive procedure which we will encode
using the notion of an L-presentation, which was introduced by Bartholdi \cite{BartholdiBranch}.

A {\em finite L-presentation} for a group $G$ is a triple $\LPres{S}{R^0}{E}$ as follows.
\begin{compactitem}
\item $S$ is a finite generating set for $G$.
\item $R^0$ is a finite subset of the free group $F(S)$ on $S$ 
consisting of relations for $G$.  It is not necessarily a complete set of relations.
\item $E$ is a finite subset of $\End(F(S))$.
\end{compactitem}
This triple must satisfy the following.  Let $M \subset \End(F(S))$ be the monoid
generated by $E$.  Define $R = \Set{$f(r)$}{$f \in M$, $r \in R^0$}$.  Then we require
that $G = \Pres{S}{R}$.
Each element of $E$ descends to an element of $\End(G)$; we call the resulting subset
$\widetilde{E} \subset \End(G)$ the {\em induced endomorphisms} of our L-presentation.

In this paper, the induced endomorphisms of our L-presentations will actually be automorphisms.
Thus in the context of this paper one should think of an L-presentation as a group presentation
incorporating certain symmetries of a group.  Here is an example.

\begin{example}
Fix $\ell \geq 1$.
Let $S = \Set{$z_i$}{$i \in \Z/\ell$}$ and $R^0 = \{z_0^2\}$.  Let $\psi \colon F(S) \rightarrow F(S)$ be the
homomorphism defined via the formula $\psi(z_i) = z_{i+1}$.  Then $\LPres{S}{R^0}{\{\psi\}}$ is a finite $L$-presentation
for the free product of $\ell$ copies of $\Z/2$.
\end{example}

We now return to the automorphism group of a free group.  The group $\BKer{n}{1}$ is a normal subgroup
of $\AutFB{n}{1}$, so $\AutFB{n}{1}$ acts on $\BKer{n}{1}$ by conjugation.  In \cite{DayPutmanBirman}, the
authors constructed a finite L-presentation for $\BKer{n}{1}$ whose set of induced endomorphisms 
generates 
\[\AutFB{n}{1} \subset \Aut(\BKer{n}{1}) \subset \End(\BKer{n}{1}).\]
The group $\BKerIA{n}{1}$ is also a normal subgroup of $\AutFB{n}{1}$, and hence
$\AutFB{n}{1}$ acts on $\BKerIA{n}{1}$ by conjugation.  Our final main theorem is as follows.

\begin{maintheorem}
\label{maintheorem:lpresentation}
For all $n \geq 2$, there exists a finite L-presentation
$\BKerIA{n}{1} = \LPres{S_{K}}{R_{K}^0}{E_{K}}$
whose set of induced endomorphisms generates $\AutFB{n}{1} \subset \Aut(\BKerIA{n}{1}) \subset \End(\BKerIA{n}{1})$.
\end{maintheorem}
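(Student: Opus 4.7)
The plan is to bootstrap from the finite L-presentation of $\BKer{n}{1}$ constructed in~\cite{DayPutmanBirman}, exploiting that $\BKerIA{n}{1}$ is normal in $\AutFB{n}{1}$ (so the $\AutFB{n}{1}$-conjugation action restricts to $\BKerIA{n}{1}$) and that the short exact sequence~\eqref{eqn:bkeriabker} has abelian quotient $\Z^n$. The ingredients at our disposal are Theorem~\ref{maintheorem:generators} (giving generators of $\BKerIA{n}{1}$), Theorem~\ref{maintheorem:abelianization} (pinning down its abelianization), and the existing L-presentation of the ambient group $\BKer{n}{1}$.

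First I would confirm that $S_K$ generates $\BKerIA{n}{1}$: this follows from Theorem~\ref{maintheorem:generators}, since the extra elements in $S_K$ (those with negative exponents) can be rewritten in terms of the generators of Theorem~\ref{maintheorem:generators} together with the conjugations $\Con{y}{x}$ and $\Con{x}{y}$, using identities analogous to the $\Mul{x^{-1}}{y} = \Mul{x}{y}\Con{x}{y}^{-1}$ rewrite noted earlier. Next I would construct $E_K\subset\End(F(S_K))$ whose induced automorphisms generate $\AutFB{n}{1}\subset\Aut(\BKerIA{n}{1})$; since $\AutFB{n}{1}$ is finitely generated (for instance by Nielsen generators of $\Aut(F_n)$ together with the $\BKer{n}{1}$-generators~\eqref{eqn:bkergen}), such a finite $E_K$ exists, and each of its endomorphisms can be written down explicitly by expressing each conjugate $\phi\cdot s\cdot\phi^{-1}$ (for $\phi$ a generator of $\AutFB{n}{1}$ and $s\in S_K$) as a word in $S_K$. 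Finally, for $R_K^0$ I would take a finite list of relations sufficient to capture (a) the commutation and exchange relations among the elements of $S_K$ that hold inside $\BKerIA{n}{1}$, and (b) the rewriting of each base relation of the $\BKer{n}{1}$ L-presentation after its ``transversal'' portion (which projects to $\Z^n$) has been absorbed by applying appropriate endomorphisms in $E_K$.

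The main obstacle is the completeness step: showing that the triple $\LPres{S_K}{R_K^0}{E_K}$ presents $\BKerIA{n}{1}$ rather than a proper quotient. My strategy is to build the L-presented group $G$, construct an obvious surjection $G\twoheadrightarrow\BKerIA{n}{1}$, then fit $G$ into a commutative diagram with~\eqref{eqn:bkeriabker} whose top row is built from the $\BKer{n}{1}$ L-presentation, and apply the five-lemma; Theorem~\ref{maintheorem:abelianization} serves as a consistency check by pinning down $\HH_1(G)$ and ruling out spurious abelian relations. The bulk of the work will be the Reidemeister--Schreier-type bookkeeping: for each base relation of the $\BKer{n}{1}$ L-presentation, one must verify that after rewriting in terms of $S_K$ and absorbing transversal conjugation into the $E_K$-action, the result lies in the $E_K$-closure of the finite set $R_K^0$. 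The richness of $E_K$ (it descends to a generating set of all of $\AutFB{n}{1}$) is exactly what makes the orbits of the base relations large enough for finiteness of $R_K^0$ to be plausible.
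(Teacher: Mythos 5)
The paper takes a genuinely different route than yours, and while your high-level strategy (surjection from the L-presented group, commutative diagram, five lemma) matches the shape of the paper's argument, your choice of ambient exact sequence and your method for certifying the middle isomorphism both differ from the paper's and both leave gaps.

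You propose to anchor the verification to the short exact sequence $1 \to \BKerIA{n}{1} \to \BKer{n}{1} \to \Z^n \to 1$ and to the L-presentation of $\BKer{n}{1}$ from \cite{DayPutmanBirman}. The paper instead anchors to the different exact sequence $1 \to \BKerIA{n}{1} \to \AutFB{n}{1} \to \zasdr \to 1$ of Lemma~\ref{lemma:relatingkernels}. This choice is not cosmetic: $\AutFB{n}{1}$ has a genuine \emph{finite} presentation (Jensen--Wahl, Theorem~\ref{th:JensenWahl}), whereas $\BKer{n}{1}$ has only a finite \emph{L}-presentation with infinitely many actual relations. In the paper's argument, the ultimate verification step consists of checking that finitely many Jensen--Wahl relations hold in the constructed group $\Delta_n$ (Lemma~\ref{lemma:computerfinalcalc}), which is a finite task. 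In your scheme, the analogous step would require confirming that the middle vertical map of your diagram is an isomorphism onto $\BKer{n}{1}$, and to do that via $\BKer{n}{1}$'s own L-presentation one would have to control infinitely many relations (or their $E_K$-orbits). You gesture at this with ``Reidemeister--Schreier-type bookkeeping,'' but that bookkeeping is precisely where the proposal would stall: the transversal for $\BKerIA{n}{1}$ in $\BKer{n}{1}$ is the whole group $\Z^n$, so the rewriting of relations produces an infinite family indexed by $\Z^n \times M(\widetilde{E}_K)$, and establishing that these collapse to finitely many $E_K$-orbits \emph{as relations in the $S_K$-presentation} is an argument that has to be supplied, not assumed. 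The paper avoids this entirely by never rewriting through the $\Z^n$ quotient.

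The deeper point you have not addressed is the one the paper highlights as the crux: constructing the extension in the top row of the diagram, i.e.\ producing a group fitting in $1 \to \Gamma_n \to (\text{?}) \to (\text{quotient}) \to 1$ in such a way that $\Gamma_n$ actually injects. It is true that your chosen exact sequence splits (the $\Mul{x_i}{y}$ give a section of $\BKer{n}{1} \to \Z^n$), so you could take the semidirect product $\Gamma_n \rtimes \Z^n$ and avoid the general nonabelian cocycle problem. But a split extension only gets you an extension over $\Z^n$; the paper's L-presentation must certify an action of all of $\AutFB{n}{1}$, and verifying that action, together with the isomorphism with $\BKer{n}{1}$ or $\AutFB{n}{1}$ on the middle row, is where the real content sits. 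The paper handles the non-split $\zasdr$ extension by developing the theory of twisted bilinear maps (\S\ref{section:twistedbilinear}, Theorem~\ref{theorem:twistedextend}), using the ``partial splittings'' $\iota_1,\iota_2$ to reduce the nonabelian 2-cocycle to a twisted commutator $\lambda$ determined by its values on generators (Lemmas~\ref{lemma:action1}--\ref{lemma:twistedbilinear}), which is exactly what makes the extension constructible from combinatorial data. Your proposal neither invokes this machinery nor replaces it with a workable alternative: ``fit $G$ into a commutative diagram'' is named as a step, but the construction of the object in the top middle position, and the isomorphism onto $\BKer{n}{1}$, are exactly the hard parts and are left unaddressed. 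As a consistency check your appeal to Theorem~\ref{maintheorem:abelianization} is reasonable, but it cannot substitute for the completeness argument since it only controls $\HH_1(G)$, not the full kernel of $G \twoheadrightarrow \BKerIA{n}{1}$.
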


See the tables in \S \ref{section:lpresentation} for explicit lists enumerating $R_{K}^0$ and $E_K$.

\paragraph{Verifying the L-presentation.}
We obtained the list of relations in $R_K^0$ by starting with a guess of a presentation
and then trying to run the following proof sketch.  Every time it failed, that failure revealed
a relation we had missed.  Let $\Gamma_n$ be the group given by the purported presentation
in Theorem \ref{maintheorem:lpresentation}.  There is a natural surjection $\Gamma_n \rightarrow \BKerIA{n}{1}$ that
we want to prove is an isomorphism.  As we will see in \S \ref{section:lpresentation} below, we have
a short exact sequence
\begin{equation}
\label{eqn:bkeriaseq}
1 \longrightarrow \BKerIA{n}{1} \longrightarrow \AutFB{n}{1} \stackrel{\rho}{\longrightarrow} \zasd \longrightarrow 1,
\end{equation}
where $\Aut(F_n)$ acts on $\Z^n$ via the natural surjection $\Aut(F_n) \rightarrow \GL_n(\Z)$.  The heart
of our proof is the construction of a similar extension $\Delta_n$ of $\zasd$ by $\Gamma_n$ which
fits into a commutative diagram
\begin{equation}
\label{eqn:maindiagram}
\begin{CD}
1  @>>> \Gamma_n        @>>> \Delta_n       @>>> \zasd @>>> 1  \\
@.      @VVV               @VVV              @VV{=}V                     @. \\
1  @>>> \BKerIA{n}{1} @>>> \AutFB{n}{1} @>{\rho}>> \zasd @>>> 1.
\end{CD}
\end{equation}
This construction is very involved; the exact sequences in \eqref{eqn:maindiagram} do not split, and constructing
group extensions with nonabelian kernels is delicate.  We will say more
about how we do this in the next paragraph.
In any case,
once we have constructed \eqref{eqn:maindiagram} we can use a known presentation of
$\AutFB{n}{1}$ due to Jensen--Wahl \cite{JensenWahl} to show that the map $\Delta_n \rightarrow \AutFB{n}{1}$
is an isomorphism.  The five-lemma then implies that the map $\Gamma_n \rightarrow \BKerIA{n}{1}$ is an isomorphism,
as desired.

\paragraph{The trouble with non-split extensions.}
If
\begin{equation}
\label{eqn:genericex}
1 \longrightarrow K \longrightarrow G \longrightarrow Q \longrightarrow 1
\end{equation}
is a group extension and presentations of $Q$ and $K$ are known, then it is
straightforward to construct a presentation of $G$.  However, when constructing
the group $\Delta_n$ in \eqref{eqn:maindiagram} we have to confront a serious problem, namely
we need to first verify that the desired extension exists.  To put it another way,
it is clear how to combine a known presentation of $\zasd$ with our purported
presentation for $\Gamma_n$ to form a group $\Delta_n$ which fits into a commutative
diagram
\[\begin{CD}
   @.   \Gamma_n        @>>> \Delta_n       @>>> \zasd @>>> 1  \\
@.      @VVV               @VVV              @VV{=}V                     @. \\
1  @>>> \BKerIA{n}{1} @>>> \AutFB{n}{1} @>{\rho}>> \zasd @>>> 1.
\end{CD}\]
However, it is difficult to show that the map $\Gamma_n \rightarrow \Delta_n$ is injective. 
Standard techniques show that proving the existence of the extension \eqref{eqn:genericex}
is equivalent to constructing a sort of ``nonabelian $K$-valued $2$-cocycle'' on $Q$; see
\cite[\S IV.6]{BrownCohomology}.  Such a $2$-cocyle is {\em not} determined by its values
on generators for $Q$.  This holds even in the simple case of a central extension; the general
case is even worse.  It is therefore very difficult to construct such a $2$-cocycle using
generators and relations.

But the extensions in \eqref{eqn:maindiagram} we are trying to understand are very special.  
While they do not split, there do exist ``partial splittings'', namely homomorphisms
$\iota_1 \co \Z^n \rightarrow \AutFB{n}{1}$ and $\iota_2\co \Aut(F_n) \rightarrow \AutFB{n}{1}$
such that $\rho \circ \iota_1 = \text{id}$ and $\rho \circ \iota_2 = \text{id}$.  Letting
$\Lambda_1 = \rho^{-1}(\Z^n)$ and $\Lambda_2 = \rho^{-1}(\Aut(F_n))$ we therefore have
$\Lambda_1 \cong \BKerIA{n}{1} \rtimes \Z^n$ and 
$\Lambda_2 \cong \BKerIA{n}{1} \rtimes \Aut(F_n)$.  
The data needed to combine $\Lambda_1$ and $\Lambda_2$ into a group $\AutFB{n}{1}$ that fits into \eqref{eqn:bkeriaseq}
is what we will call a ``twisted bilinear map'' from 
$\Aut(F_n) \times \Z^n$ to $\BKerIA{n}{1}$.
The definition is complicated, so to give the flavor of it in this introduction we will discuss a simpler
situation.

\paragraph{Splicing together direct products.}
Let $A$ and $B$ and $K$ be abelian groups.  We want to construct a not necessarily abelian group $G$
with the following property.
\begin{compactitem}
\item There is a short exact sequence
\[1 \longrightarrow K \longrightarrow G \stackrel{\rho}{\longrightarrow} A \times B \longrightarrow 1\]
together with homomorphisms $\iota_1\co A \rightarrow G$ and $\iota_2\co B \rightarrow G$ such that
$\rho^{-1}(A)$ and $\rho^{-1}(B)$ are the internal direct products $K \times \iota_1(A)$ and
$K \times \iota_2(B)$, respectively.
\end{compactitem}
Given this data, we can define a set map $\lambda\co A \times B \rightarrow K$ via the
formula
\[\lambda(a,b) = [\iota_1(a),\iota_2(b)] \quad \quad (a \in A, b \in B);\]
here the bracket is the commutator bracket in $G$.
It is easy to see that $\lambda$ is bilinear.  Conversely, given a bilinear map $\lambda\co A \times B \rightarrow K$
we can construct a group $G$ with the above properties by letting $G$ consist of
all triples $(k,b,a) \in K \times B \times A$ with the multiplication
\[(k,b,a)(k',b',a') = (k+k'+\phi(a,b'),b+b',a+a').\]
The bilinearity of $\phi$ is needed for this multiplication to be associative.

\paragraph{Adding the twisting.}
The groups we are interested in fit into semidirect products, so we will have
to incorporate the various group actions into our bilinear maps.  The key
property of the resulting theory of twisted bilinear maps is that (unlike
general $2$-cocycles but like ordinary bilinear maps) they are determined by their values on generators.  Letting
$\Gamma_n$ be the group in \eqref{eqn:maindiagram}, we will 
therefore be able to use combinatorial group theory to construct
an appropriate twisted bilinear map $Z^n \times \Aut(F_n) \rightarrow \Gamma_n$ 
that behaves like the twisted bilinear map $Z^n \times \Aut(F_n) \rightarrow \BKerIA{n}{1}$
that determines $\AutFB{n}{1}$.  This will allow us to construct the group 
$\Delta_n$ fitting into \eqref{eqn:maindiagram} and complete the proof
of Theorem \ref{maintheorem:lpresentation}.

\paragraph{Outline.}
We prove Theorem \ref{maintheorem:generators} in \S \ref{section:generators}, Theorem \ref{maintheorem:notfp} in
\S \ref{section:notfp}, and Theorem \ref{maintheorem:abelianization} in \S \ref{section:abelianization}.
Preliminaries for the proof Theorem \ref{maintheorem:lpresentation} are in 
\S \ref{section:lpresentationprelim},
and the proof itself appears in \S \ref{section:lpresentation}.  The proof
of Theorem \ref{maintheorem:lpresentation} depends on computer calculations
that are described in \S \ref{section:computer}.

\section{Generators}
\label{section:generators}

In this section, we prove Theorem \ref{maintheorem:generators}.  Letting
$X = \{x_1,\ldots,x_n\}$ and $Y = \{y_1,\ldots,y_k\}$, recall that this theorem asserts
that the set
\[T\coloneq\Set{$\Mul{x}{[y,z]}$}{$x \in X$, $y \in Y$, $z \in (X \cup Y) \setminus \{x,y\}$}
\cup \Set{$\Con{y}{z}$, $\Con{z}{y}$}{$y \in Y$, $z \in (X \cup Y) \setminus \{y\}$}\]
generates $\BKerIA{n}{k}$.  
\begin{proof}[Proof of Theorem~\ref{maintheorem:generators}]
The key is the exact sequence
\[1 \longrightarrow \BKerIA{n}{k} \longrightarrow \BKer{n}{k} \stackrel{\rho}{\longrightarrow} \Z^{nk} \longrightarrow 1\]
discussed in the introduction (see \eqref{eqn:bkeriabker}).  Define
\[S_1 = \Set{$\Mul{x}{y}$}{$x \in X$, $y \in Y$} \quad \text{and} \quad S_2 = \Set{$\Con{y}{z}$, $\Con{z}{y}$}{$y \in Y$, $z \in (X \cup Y) \setminus \{y\}$}.\]
As we discussed in the introduction (see the equation \eqref{eqn:bkergen} and the remark following it), 
the authors proved in \cite{DayPutmanBirman} that $S_1 \cup S_2$ generates $\BKer{n}{k}$.  
We have $S_2 \subset \BKerIA{n}{k} = \ker(\rho)$.  Also, $\rho$ maps the elements of $S_1$
to a basis of $\Z^{nk}$.  We therefore see that $\Z^{nk}$ is the quotient
of $\BKer{n}{k}$ by the normal closure of the set $S_1' \cup S_2$, where
\begin{align*}
S_1' &= \Set{$[s,s']$}{$s,s' \in S_1$} \\
&= \Set{$[\Mul{x}{y},\Mul{x}{y'}]$}{$x \in X$, $y,y' \in Y, y \neq y'$} \\
&\quad\quad\quad\cup \Set{$[\Mul{x}{y},\Mul{x'}{y'}]$}{$x,x' \in X$, $y,y' \in Y$, $x \neq x'$} \\
&= \Set{$\Mul{x}{[y,y']}$}{$x \in X$, $y,y' \in Y$, $y \neq y'$}.
\end{align*}
Here we are using the fact that $[\Mul{x}{y},\Mul{x'}{y}] = 1$ for $x,x' \in X$ and $y \in Y$ with $x \neq x'$.  Since
$S_1' \cup S_2 \subset T$, we conclude that $T$ normally generates $\BKerIA{n}{k}$.

Letting $G \subset \BKerIA{n}{k}$ be the subgroup generated by $T$, it is therefore enough to prove that
$G$ is a normal subgroup.  To do this, it is enough to prove that for $s \in S_1 \cup S_2$ and $t \in T$, we
have $s t s^{-1} \in G$ and $s^{-1} t s \in G$.  In fact, since $S_2 \subset T$ it is enough to do this for
$s \in S_1$.  
The identities that show this are in Table
\ref{table:gisnormal}.
\end{proof}

\begin{table}
\tiny
\centering
\begin{tabular}{c|c|c}
$t \in T$ & $sts^{-1}$ & $s^{-1}ts$ \\ 
\hline
$\Mul{x_a}{[y_d,x_b]}$ & $\Con{x_a}{y_d} \Mul{x_a}{[y_d,x_b]} \Con{x_a}{y_d}^{-1}$ & $\Con{x_a}{y_d}^{-1} \Mul{x_a}{[y_d,x_b]} \Con{x_a}{y_d}$\\
$\Mul{x_b}{[y_d,x_a]}$ & $\Con{x_a}{y_d} \Mul{x_b}{[y_d,x_a]} \Con{x_a}{y_d}^{-1}$ & $\Con{x_a}{y_d}^{-1} \Mul{x_b}{[y_d,x_a]} \Con{x_a}{y_d}$\\
$\Mul{x_a}{[y_e,x_b]}$ & $\Con{x_a}{y_d} \Mul{x_a}{[y_e,x_b]} \Con{x_a}{y_d}^{-1}$ & $\Con{x_a}{y_d}^{-1} \Mul{x_a}{[y_e,x_b]} \Con{x_a}{y_d}$\\
$\Mul{x_b}{[y_e,x_a]}$ & $\Con{x_b}{y_d}^{-1} \Mul{x_b}{[y_e,x_a]} \Con{x_b}{y_d} \Mul{x_b}{[y_e,y_d]}$ & $\Con{x_b}{y_d} \Mul{x_b}{[y_e,x_a]} \Mul{x_b}{[y_d,y_e]}\Con{x_b}{y_d}^{-1} $\\
$\Mul{x_a}{[y_d,y_e]}$ & $\Con{x_a}{y_d} \Mul{x_a}{[y_d,y_e]} \Con{x_a}{y_d}^{-1}$ & $\Con{x_a}{y_d}^{-1} \Mul{x_a}{[y_d,y_e]} \Con{x_a}{y_d}$ \\
$\Mul{x_a}{[y_e,y_f]}$ & $\Con{x_a}{y_d}\Mul{x_a}{[y_e,y_f]}\Con{x_a}{y_d}^{-1}$ & $\Con{x_a}{y_d}^{-1}\Mul{x_a}{[y_e,y_f]}\Con{x_a}{y_d}$ \\

$\Con{y_d}{y_e}$ & $\Con{x_a}{y_d}\Mul{x_a}{[y_d,y_e]}\Con{x_a}{y_d}^{-1}\Con{y_d}{y_e}$ & $\Mul{x_a}{[y_e,y_d]}\Con{y_d}{y_e}$ \\
$\Con{y_d}{x_a}$ & $\Con{x_a}{y_d}\Con{y_d}{x_a}$ & $\Con{x_a}{y_d}^{-1}\Con{y_d}{x_a}$\\
$\Con{y_d}{x_b}$ & $\Con{y_d}{x_b}\Con{x_a}{y_d}\Con{y_d}{x_b}^{-1}\Mul{x_a}{[y_d,x_b]}\Con{y_d}{x_b}\Con{x_a}{y_d}$ & $\Con{x_a}{y_d}\Con{y_d}{x_b}^{-1}\Mul{x_a}{[y_d,x_b]}^{-1}\Con{y_d}{x_b}$ \\
$\Con{y_e}{x_a}$ & $\Con{y_e}{x_a}\Con{y_e}{y_d}$ & $\Con{y_e}{x_a}\Con{y_e}{y_d}^{-1}$\\
$\Con{x_a}{y_e}$ & $\Con{x_a}{y_e}\Con{x_a}{y_d}\Mul{x_a}{[y_e,y_d]}\Con{x_a}{y_d}^{-1}$ & $\Con{x_a}{y_e}\Mul{x_a}{[y_d,y_e]}$ \\
\end{tabular}
\caption{Fix $s = \Mul{x_a}{y_d}$.  This table shows how to write $sts^{-1}$ and $s^{-1}ts$ as a word
in $T$ for all $t \in T$.  Basis elements with distinct subscripts are assumed to be distinct.  If a formula is not listed, then
$sts^{-1} = s^{-1}ts = t$.  All these formulas can be easily proved by checking the effect
of the indicated automorphisms on a basis for the free group.}
\label{table:gisnormal}
\end{table}

\section{The Torelli kernel is not finitely presentable}
\label{section:notfp}

In this section, we prove Theorem \ref{maintheorem:notfp}.  Recall that this theorem asserts that $\HH_2(\BKerIA{n}{k};\Q)$ is
infinite dimensional when $n \geq 2$ and $k \geq 1$.  
\begin{proof}[Proof of Theorem~\ref{maintheorem:notfp}]
Consider the Hochschild--Serre spectral sequence associated
to the short exact sequence
\[1 \longrightarrow \BKerIA{n}{k} \longrightarrow \BKer{n}{k} \stackrel{\rho}{\longrightarrow} \Z^{nk} \longrightarrow 1\]
discussed in the introduction (see \eqref{eqn:bkeriabker}).   It is of the form
\[E^2_{pq} = \HH_p(\Z^{nk};\HH_q(\BKerIA{n}{k};\Q)) \Rightarrow \HH_{p+q}(\BKer{n}{k};\Q).\]
The authors proved in \cite{DayPutmanBirman} that $\HH_2(\BKer{n}{k};\Q)$ is infinite dimensional, so at least one
of $E^2_{20}$ and $E^2_{11}$ and $E^2_{02}$ must be infinite dimensional.  Clearly
$E^2_{20} = \HH_2(\Z^{nk};\Q)$ is finite dimensional.  Also, Theorem \ref{maintheorem:generators} implies that
$\HH_1(\BKerIA{n}{k};\Q)$ is finite-dimensional, so $E^2_{11} = \HH_1(\Z^{nk};\HH_1(\BKerIA{n}{k};\Q))$ is finite dimensional
We conclude that $E^2_{02} = \HH_0(\Z^{nk};\HH_2(\BKerIA{n}{k};\Q))$ is infinite dimensional, so
$\HH_2(\BKerIA{n}{k};\Q)$ is infinite dimensional, as desired.
\end{proof}

\section{Abelianization}
\label{section:abelianization}

In this section, we prove Theorem \ref{maintheorem:abelianization}.  Recall that this theorem asserts that
$\HH_1(\BKerIA{n}{k};\Z)$ is free abelian and that the abelianization map is given by the restriction
of the Johnson homomorphism to $\BKerIA{n}{k}$.  
The theorem also gives the rank of the abelianization of $\BKerIA{n}{k}$ as a polynomial in $n$ and $k$.
\begin{proof}[Proof of Theorem~\ref{maintheorem:abelianization}]
We begin by recalling the definition of the Johnson
homomorphism; see \cite{SatohJohnson} for more details and references.  Let $\pi\co [F_{n,k},F_{n,k}] \rightarrow \bigwedge^2 \Z^{n+k}$
be the projection whose kernel is $[F_{n,k},[F_{n,k},F_{n,k}]]$.  This map satisfies $\pi([z,z']) = [z] \wedge [z']$ for
$z,z' \in F_{n,k}$; here $[z],[z'] \in \Z^{n+k}$ are the images of $z$ and $z'$ in the abelianization of $F_{n,k}$.  The
Johnson homomorphism is then a homomorphism
\[\tau\co \IA_{n,k} \rightarrow \Hom(\Z^{n+k},\textstyle{\bigwedge^2} \Z^{n+k})\]
that satisfies the formula
\[\tau(f)([z]) = \pi(f(z) z^{-1}) \quad \quad (f \in \IA_{n,k}, z \in F_{n,k}).\]
Letting $X = \{x_1,\ldots,x_n\}$ and $Y = \{y_1,\ldots,y_k\}$, the Johnson homomorphism has the following
effect on the basic elements of $\IA_{n,k}$ defined in the introduction.
\begin{compactitem}
\item For distinct $z,z' \in X \cup Y$, we have
\[\tau(\Con{z}{z'})([w]) = \begin{cases}
[z] \wedge [z'] & \text{if $w = z$},\\
0 & \text{otherwise} \end{cases}
\quad \quad (w \in X \cup Y).\]
\item For distinct $z,z',z'' \in X \cup Y$, we have
\[\tau(\Mul{z}{[z',z'']})([w]) = \begin{cases}
[z'] \wedge [z''] & \text{if $w = z$},\\
0 & \text{otherwise} \end{cases}
\quad \quad (w \in X \cup Y).\]
\end{compactitem}
Set
\begin{align*}
T = &\Set{$\Mul{x}{[y,x']}$}{$x \in X$, $y \in Y$, $x' \in X \setminus \{x\}$} \\
&\cup\Set{$\Mul{x}{[y_a,y_b]}$}{$x \in X$, $1 \leq a < b \leq k$} \\
&\cup\Set{$\Con{y}{z}$, $\Con{z}{y}$}{$y \in Y$, $z \in (X \cup Y) \setminus \{y\}$}.
\end{align*}
Since $\Mul{x}{[y_b,y_a]} = \Mul{x}{[y_a,y_b]}^{-1}$ for
$x \in X$ and $1 \leq a < b \leq k$, Theorem \ref{maintheorem:generators} implies that $T$ generates
$\BKerIA{n}{k}$.  Examining the above formulas, 
we see that $\tau$ takes $T$ injectively to a linearly independent subset of the free abelian group
$\Hom(\Z^{n+k},\bigwedge^2 \Z^{n+k})$.  This implies that if $u$ is an element of the free group $F(T)$ on $T$
which maps to a relation in $\BKerIA{n}{k}$, then $u \in [F(T),F(T)]$ (otherwise, $\tau$ would take the
image of $u$ in $\BKerIA{n}{k}$ to a nontrivial element of $\Hom(\Z^{n+k},\bigwedge^2 \Z^{n+k})$).
We conclude that $\tau$ induces the abelianization of $\BKerIA{n}{k}$ and that
$\HH_1(\BKerIA{n}{k};\Z) = \Z^{|T|}$.
This is free abelian of rank
\[|T|= n(n-1)k +n \binom{k}{2}+2nk +k(k-1).\qedhere\]
\end{proof}

\section{Preliminaries for the proof of Theorem \ref{maintheorem:lpresentation}}
\label{section:lpresentationprelim}

The rest of this paper is devoted to proving Theorem \ref{maintheorem:lpresentation}, 
which gives a finite L-presentation for $\BKerIA{n}{1}$.  This section contains three
subsections of preliminaries: \S \ref{section:relatingkernels} constructs a needed exact
sequence, \S \ref{section:twistedbilinear} discusses twisted bilinear maps, and
\S \ref{section:presentations} discusses presentations for some related groups.

To simplify our notation, 
we will set $y = y_1$, so $\{x_1,\ldots,x_n,y\}$ is the basis for $F_{n,1}$.  When
writing matrices in $\GL_{n+1}(\Z)$, we will always use the basis $\{[x_1],\ldots,[x_n],[y]\}$
for $\Z^{n+1}$.  

\subsection{Relating the two kernels}
\label{section:relatingkernels}

This is the first of three preliminary sections for the proof of Theorem \ref{maintheorem:lpresentation}.
In it, we construct the exact sequence 
\[1 \longrightarrow \BKerIA{n}{1} \longrightarrow \AutFB{n}{1} \stackrel{\rho}{\longrightarrow} \zasd \longrightarrow 1\]
discussed in the introduction (see Lemma \ref{lemma:relatingkernels} below).  
We first address an irritating technical point.  Throughout this
paper, all group actions are left actions.  In particular, elements of $\Z^n$ will be regarded
as column vectors and matrices in $\GL_n(\Z)$ act on these column vectors on the left (we have
already silently used this convention when we wrote matrices).  However, it turns out that the action
of $\Aut(F_n)$ on $\Z^n$ in the semidirect product appearing the above exact sequence is induced
by the natural {\em right} action of $\GL_n(\Z)$ on row vectors.  We do not wish to mix up right and
left actions, so we convert this into a left action and define $\Z^n \rtimes_r \GL_n(\Z)$ to be the semidirect
product associated to the action of $\GL_n(\Z)$ on $\Z^n$ defined by the formula
\[M \cdot z = (M^{-1})^{t} z \quad \quad (M \in \GL_n(\Z), z \in \Z^n).\]
To understand this formula, observe that $(M^{-1})^{t} z$ is the transpose of $(z^t) M^{-1}$; the inverse
appears because we are converting a right action into a left action.  We then have the following.

\begin{lemma}
\label{lemma:identifystabilizer}
The stabilizer subgroup $(\GL_{n+1}(\Z))_{[y]}$ is isomorphic to $\Z^n \rtimes_r \GL_n(\Z)$.
\end{lemma}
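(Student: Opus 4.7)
The plan is to realize $(\GL_{n+1}(\Z))_{[y]}$ concretely via block matrices, extract a split short exact sequence, and then verify that the conjugation action on the kernel matches the definition of the twisted semidirect product $\Z^n \rtimes_r \GL_n(\Z)$, namely $M \cdot z = (M^{-1})^t z$.

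First I would observe that, since $[y]$ is the last basis vector $e_{n+1}$ of $\Z^{n+1}$, an element $M \in \GL_{n+1}(\Z)$ stabilizes $[y]$ precisely when the last column of $M$ equals $e_{n+1}$. Writing such an $M$ in block form
\[
M = \begin{pmatrix} A & 0 \\ v & 1 \end{pmatrix}
\]
with $A$ an $n \times n$ block and $v$ a $1 \times n$ row vector, an expansion of the determinant along the last column gives $\det M = \det A$, so $M \in \GL_{n+1}(\Z)$ is equivalent to $A \in \GL_n(\Z)$ with $v \in \Z^n$ arbitrary.

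Next I would define $\phi \co (\GL_{n+1}(\Z))_{[y]} \to \GL_n(\Z)$ by $M \mapsto A$. A direct block multiplication shows that $\phi$ is a homomorphism and that it is split by the section $A \mapsto \mathrm{diag}(A,1)$. Its kernel consists of the block matrices with $A = I_n$, and the assignment $M \mapsto v^t$ identifies $\ker(\phi)$ with $\Z^n$. The remaining point is the conjugation action; computing
\[
\begin{pmatrix} A & 0 \\ 0 & 1 \end{pmatrix} \begin{pmatrix} I_n & 0 \\ v & 1 \end{pmatrix} \begin{pmatrix} A^{-1} & 0 \\ 0 & 1 \end{pmatrix} = \begin{pmatrix} I_n & 0 \\ vA^{-1} & 1 \end{pmatrix}
\]
shows that conjugation by $\mathrm{diag}(A,1)$ sends the row vector $v$ to $v A^{-1}$. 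Transposing to the column-vector convention, this becomes $z \mapsto (A^{-1})^t z$, which is exactly the action defining $\Z^n \rtimes_r \GL_n(\Z)$.

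I do not anticipate a genuine obstacle here, since this is essentially a linear-algebra calculation. The only place requiring care is precisely the point that motivates the definition of $\rtimes_r$ in the paper: converting between the natural right action of $\GL_n(\Z)$ on row vectors and the left action on column vectors forces the inverse-transpose, and this is exactly what emerges from the block conjugation above.
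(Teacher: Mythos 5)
Your proof is correct, and it takes a mildly different route from the paper's. The paper writes down the isomorphism $\psi\co(\GL_{n+1}(\Z))_{[y]} \to \Z^n \rtimes_r \GL_n(\Z)$ explicitly, sending $M$ (with blocks $\widehat{M}$ and row $\overline{M}^t$) to $\bigl((\widehat{M}^{-1})^t\overline{M},\widehat{M}\bigr)$, and then verifies directly by block multiplication that $\psi(M_1 M_2) = \psi(M_1)\psi(M_2)$; injectivity and surjectivity are immediate from the formula. You instead build a split short exact sequence $1 \to \Z^n \to (\GL_{n+1}(\Z))_{[y]} \to \GL_n(\Z) \to 1$ and compute the conjugation action of the splitting on the kernel, observing that it is exactly $A\cdot z = (A^{-1})^t z$; the isomorphism then follows from the standard fact that a split extension is determined by its action. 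Both arguments are elementary and essentially the same length. One small advantage of the paper's version is that it names a specific isomorphism, which is reused later (it appears as the second arrow in the definition of $\pi_2$ in the proof of Lemma~\ref{lemma:relatingkernels}); your version would require unwinding the split-extension argument to recover that map, or equivalently noting that with the kernel factor written on the left you should decompose $M$ as $N_v\cdot\mathrm{diag}(A,1)$, which reproduces the formula $(A^{-1})^t\overline{M}$ after the row/column conversion. This is a cosmetic point, not a gap.
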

\begin{proof}
We define a homomorphism $\psi\co (\GL_{n+1}(\Z))_{[y]} \rightarrow \Z^n \rtimes_r \GL_n(\Z)$ 
as follows.
Consider $M \in (\GL_{n+1}(\Z))_{[y]}$.  There exist $\widehat{M} \in \GL_n(\Z)$ and
$\overline{M} \in \Z^n$ such that
\[M = \left(\begin{array}{c|c} \widehat{M} & 0 \\ \hline \overline{M}^t & 1 \end{array}\right);\]
here we are using our convention that elements of $\Z^n$ are column vectors, so the transpose $\overline{M}^t$ of
$\overline{M} \in \Z^n$ is a row.
We then define $\psi(M) = \left(\left(\widehat{M}^{-1}\right)^t \overline{M}, \widehat{M}\right)$.  To see that this is a homomorphism, observe that
for $M_1,M_2 \in (\GL_{n+1}(\Z))_{[y]}$ we have
\[M_1 M_2 = \left(\begin{array}{c|c} \widehat{M}_1 & 0 \\ \hline \overline{M}_1^t & 1 \end{array}\right) 
\left(\begin{array}{c|c} \widehat{M}_2 & 0 \\ \hline \overline{M}_2^t & 1 \end{array}\right) = 
\left(\begin{array}{c|c} \widehat{M}_1 \widehat{M}_2 & 0 \\ \hline \overline{M}_1^t \widehat{M}_2 + \overline{M}_2^t & 1 \end{array}\right)
=\left(\begin{array}{c|c} \widehat{M}_1 \widehat{M}_2 & 0 \\ \hline (\widehat{M}_2^t \overline{M}_1 + \overline{M}_2)^t & 1 \end{array}\right),\]
and hence
\begin{align*}
\psi(M_1) \psi(M_2) &= \left(\left(\widehat{M}_1^{-1}\right)^t \overline{M}_1, \widehat{M}_1\right) \left(\left(\widehat{M}_2^{-1}\right)^t \overline{M}_2, \widehat{M}_2\right) \\
&= \left(\left(\widehat{M}_1^{-1}\right)^t \overline{M}_1 + \left(\widehat{M}_1^{-1}\right)^t \left(\widehat{M}_2^{-1}\right)^t \overline{M}_2, \widehat{M}_1 \widehat{M}_2\right) \\
&= \left(\left(\widehat{M}_1^{-1}\right)^t \left(\widehat{M}_2^{-1}\right)^t \left(\widehat{M}_2^t \overline{M}_1 + \overline{M}_2\right), \widehat{M}_1 \widehat{M}_2\right) \\
&= \psi(M_1 M_2).
\end{align*}
That $\psi$ is a bijection is obvious.
\end{proof}

\begin{remark}
There is an isomorphism between $\Z^n \rtimes_r \GL_n(\Z)$ 
and the semidirect product of $\Z^n$ and $\GL_n(\Z)$ with respect to the
standard left action of $\GL_n(\Z)$ on $\Z^n$.  However, this isomorphism acts
as the inverse transpose on the $\GL_n(\Z)$ factor, and to keep our formulas from
getting out of hand we want to not change this factor.  Throughout this paper,
we will use the explicit isomorphism described in the proof of
Lemma \ref{lemma:identifystabilizer}.
\end{remark}

Define $\zasdr$ to be the semidirect product induced by the action of
$\Aut(F_n)$ on $\Z^n$ obtained by composing the projection $\Aut(F_n) \rightarrow \GL_n(\Z)$
with the action of $\GL_n(\Z)$ on $\Z^n$ discussed above.  We then have the following
lemma, which is the main result of this section.

\begin{lemma}
\label{lemma:relatingkernels}
There is a short exact sequence
\[1 \longrightarrow \BKerIA{n}{1} \longrightarrow \AutFB{n}{1} \stackrel{\rho}{\longrightarrow} \zasdr \longrightarrow 1.\]
Also, there exist homomorphisms $\iota_1\co\Aut(F_n) \rightarrow \AutFB{n}{1}$ and 
$\iota_2\co \Z^n \rightarrow \AutFB{n}{1}$ such that $\rho \circ \iota_1$ 
and $\rho \circ \iota_2$ are the standard inclusions of $\Aut(F_n)$ and $\Z^n$ into $\zasdr$ respectively.
\end{lemma}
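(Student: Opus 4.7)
The plan is to define $\rho$ by packaging two pieces of information: the projection $\pi\co \AutFB{n}{1} \to \Aut(F_n)$ already discussed in the introduction, together with the action of $\AutFB{n}{1}$ on the abelianization $F_{n,1}^{\text{ab}} \cong \Z^{n+1}$. Because every $f \in \AutFB{n}{1}$ fixes $\Conj{y}$, the induced matrix $M_f$ lies in $(\GL_{n+1}(\Z))_{[y]}$, which Lemma \ref{lemma:identifystabilizer} identifies with $\Z^n \rtimes_r \GL_n(\Z)$ via the explicit homomorphism $\psi$. The $\GL_n(\Z)$-component of $\psi(M_f)$ is just the image of $\pi(f)$ under the abelianization $\Aut(F_n) \to \GL_n(\Z)$, so I will define $\rho(f) \in \zasdr$ to be the pair consisting of the $\Z^n$-component of $\psi(M_f)$ together with $\pi(f)$ itself. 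Since $\psi$ and $\pi$ are both homomorphisms, $\rho$ is too.

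Next I will identify the kernel. If $\rho(f) = 1$, then $\pi(f) = 1$, so $f \in \BKer{n}{1}$ and $\widehat{M}_f = I_n$; and the $\Z^n$-component of $\psi(M_f)$ vanishes, which under the condition $\widehat{M}_f = I_n$ forces $\overline{M}_f = 0$. Combined, $M_f = I_{n+1}$ and hence $f \in \IA_{n,1}$, giving $\ker(\rho) = \BKer{n}{1} \cap \IA_{n,1} = \BKerIA{n}{1}$. For the sections, $\iota_1$ will be the obvious inclusion $\Aut(F_n) \hookrightarrow \AutFB{n}{1}$ where $\phi$ acts on $F_{n,1}$ by fixing $y$ pointwise (this was already used in the introduction to split $\pi$), and one checks immediately that $\rho \circ \iota_1$ is the standard inclusion. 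For $\iota_2$, given $v = (v_1,\ldots,v_n)^t \in \Z^n$, I will let $\iota_2(v)$ be the automorphism fixing $y$ and sending $x_i \mapsto y^{v_i} x_i$; this lies in $\AutFB{n}{1}$, is a homomorphism in $v$ because $y$ is fixed so the $y$-exponents add under composition, and a direct calculation shows $\rho \circ \iota_2$ is the standard inclusion of $\Z^n$. Surjectivity of $\rho$ then follows because $\rho(\iota_2(v)\iota_1(\phi)) = (v,\phi)$ for every $(v,\phi) \in \zasdr$.

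The substantive content of the argument is already captured by Lemma \ref{lemma:identifystabilizer}; beyond that, the main hurdle will simply be keeping the various conventions straight, particularly the inverse transpose appearing in the twisted action defining $\zasdr$. The definition of $\iota_2$ is calibrated precisely so that its image under the $\psi$-identification comes out to $v$ itself rather than some twist of it, which is what makes $\rho \circ \iota_2$ come out cleanly as the standard inclusion.
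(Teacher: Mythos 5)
Your proposal is correct and follows essentially the same route as the paper: the same map $\rho$ built from $\pi$ and the $\psi$ of Lemma~\ref{lemma:identifystabilizer}, the same $\iota_1$ (fixing $y$ pointwise), and the same $\iota_2$ (the paper writes it as $\Mul{x_1}{y}^{z_1}\cdots\Mul{x_n}{y}^{z_n}$, which is exactly $x_i\mapsto y^{z_i}x_i$, $y\mapsto y$). The only cosmetic difference is that you unpack the kernel computation matrix-by-matrix, while the paper phrases it as $\ker(\pi_1\oplus\pi_2)=\BKer{n}{1}\cap\IAB{n}{1}=\BKerIA{n}{1}$.
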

\begin{proof}
Let $\pi_1\co\AutFB{n}{1} \rightarrow \Aut(F_n)$ be the map induced by the projection
$F_{n,1} \rightarrow F_n$ whose kernel is normally generated by $y$.  Also, let
$\pi_2\co\AutFB{n}{1} \rightarrow \Z^n \rtimes_r \GL_n(\Z)$ be the composition
\[\AutFB{n}{1} \longrightarrow (\GL_{n+1}(\Z))_{[y]} \stackrel{\cong}{\longrightarrow} \Z^n \rtimes_r \GL_n(\Z),\]
where the second map is the isomorphism given by Lemma \ref{lemma:identifystabilizer}.
By definition, $\BKer{n}{1} = \ker(\pi_1)$ and $\IAB{n}{1} = \ker(\pi_2)$.  Recalling
that $\BKerIA{n}{1} = \BKer{n}{1} \cap \IAB{n}{1}$, it follows that
$\BKerIA{n}{1} = \ker(\rho)$, where $\rho$ is the composition
\[\AutFB{n}{1} \stackrel{\pi_1 \oplus \pi_2}{\longrightarrow} \left(\Aut(F_n)\right) \oplus \left(\Z^n \rtimes_r \GL_n\left(\Z\right)\right).\]
Let $\eta\co\Aut(F_n) \rightarrow \GL_n(\Z)$ be the natural projection.  The image
of $\rho$ is contained in the subgroup
\[\Set{$\left(f,\left(z,\eta\left(f\right)\right)\right)$}{$f \in \Aut(F_n)$, $z \in \Z^n$}
\subset \left(\Aut(F_n)\right) \oplus \left(\Z^n \rtimes_r \GL_n\left(\Z\right)\right),\]
which is clearly isomorphic to $\zasdr$.  We can therefore regard $\rho$ as a homomorphism
$\rho\co\AutFB{n}{1} \rightarrow \zasdr$, and we have an exact sequence
\[1 \longrightarrow \BKerIA{n}{1} \longrightarrow \AutFB{n}{1} \stackrel{\rho}{\longrightarrow} \zasdr.\]
Let $\iota_1\co\Aut(F_n) \rightarrow \AutFB{n}{1}$ be the evident inclusion whose image
is the stabilizer subgroup $(\AutFB{n}{1})_y$ and let $\iota_2\co\Z^n \rightarrow \AutFB{n}{1}$
be the map defined via the formula
\[\iota_2(z_1,\ldots,z_n) = \Mul{x_1}{y}^{z_1} \Mul{x_2}{y}^{z_2} \cdots \Mul{x_n}{y}^{z_n} \quad \quad (z_1,\ldots,z_n \in \Z),\]
where the automorphisms $\Mul{x_i}{y}$ are as in the introduction.  The map
$\iota_2$ is a homomorphism because the $\Mul{x_i}{y}$ commute.  It is clear that
$\rho \circ \iota_1 = \text{id}$ and $\rho \circ \iota_2 = \text{id}$.  This implies
that $\rho$ is surjective, and the lemma follows.
\end{proof}

\subsection{Twisted bilinear maps and group extensions}
\label{section:twistedbilinear}

This is the second section containing preliminaries for the proof of Theorem \ref{maintheorem:lpresentation}.
In it, we discuss the theory of twisted bilinear maps alluded to in the introduction.
Throughout this section, let $A$ and $B$ and $K$ be groups equipped with the following left actions.
\begin{compactitem}
\item The group $A$ acts on $B$; for $a \in A$ and $b \in B$, we will write
$\AB{a}{b}$ for the image of $b$ under the action of $a$.
\item The groups $A$ and $B$ both act on $K$.  For $k \in K$ and $a \in A$ and $b \in B$, we
will write $\AK{a}{k}$ and $\BK{b}{k}$ for the images of $k$ under the actions of $a$ and $b$,
respectively.
\end{compactitem}
A {\em twisted bilinear map} from $A \times B$ to $K$ is a set map $\lambda\co A \times B \rightarrow K$
satisfying the following three properties.
\begin{compactitem}
\item[TB1.\ ] For all $a \in A$ and $b_1,b_2 \in B$, we have
$\lambda(a,b_1 b_2) = \lambda(a,b_1) \cdot \BK{\AB{a}{b_1}}{\lambda(a,b_2)}$.
\item[TB2.\ ] For all $a_1,a_2 \in A$ and $b \in B$, we have
$\lambda(a_1 a_2,b) = \AK{a_1}{\lambda(a_2,b)} \cdot \lambda(a_1,\AB{a_2}{b})$.
\item[TB3.\ ] For all $a \in A$ and $b \in B$ and $k \in K$, we have
$\lambda(a,b) \cdot \BK{\AB{a}{b}}{\AK{a}{k}} \cdot \lambda(a,b)^{-1} = \AK{a}{\BK{b}{k}}$.
\end{compactitem}
Observe that this reduces to the definition of a bilinear map if all the actions are trivial and all
the groups are abelian.  The key example is as follows.

\begin{example}
\label{example:groupextend}
Consider a short exact sequence of groups
\[1 \longrightarrow K \longrightarrow G \stackrel{\rho}{\longrightarrow} B \rtimes A \longrightarrow 1\]
together with homomorphisms $\iota_1\co A \rightarrow G$ and $\iota_2\co B \rightarrow G$ such that
$\rho\circ \iota_1$ and $\rho\circ \iota_2$ are the standard inclusions of $A$ and $B$ in $B\rtimes A$ respectively.
Observe that this implies that
$\rho^{-1}(A)$ and $\rho^{-1}(B)$ are the internal semidirect products $K \rtimes \iota_1(A)$ and
$K \rtimes \iota_2(B)$, respectively.
Let $\AB{a}{b}$ be the action $a\cdot b$ defining the semidirect product $B\rtimes A$, and define actions of $A$ and $B$ on $K$ by
\[\AK{a}{k}=\iota_1(a) \cdot k \cdot \iota_1(a)^{-1} \quad \text{and} \quad  \BK{b}{k}=\iota_2(b) \cdot k \cdot \iota_2(b)^{-1} \]
for all $a \in A$ and $b \in B$ and $k \in K$.  
Define a set map $\lambda\co A \times B \rightarrow K$ via the
formula
\[\lambda(a,b) = \iota_1(a) \cdot \iota_2(b) \cdot \iota_1(a)^{-1} \iota_2(\AB{a}{b})^{-1}.\]
Note that this is a kind of ``twisted commutator" map;
it reduces to the commutator bracket if the action of $A$ on $B$ is trivial.  
Given these definitions, an
easy algebraic juggle shows that $\lambda$ is a twisted bilinear map.  We will say that
$\lambda$ is the twisted bilinear map {\em associated} to $G$ and $\iota_1$ and $\iota_2$.  
\end{example}

\begin{remark}
We take a moment to explain the different aspects of the definition of a twisted bilinear map.
For a fixed $a\in A$, we can twist the action of $B$ on $K$ by the action of $A$ on $B$ by $a$, to get an action $b\cdot k = \BK{\AB{a}{b}}{k}$.
Property TB1 simply states that the function $\lambda(a,\cdot)\co B\to K$ is a crossed homomorphism with respect to this action twisted by $a$.

Property TB2 is similar, but involves two kinds of twisting.
The set of functions $B\to K$ is a group with the pointwise product.
The group $A$ acts on this group in two ways.
The first is by post-composition: for $a\in A$ and $f\co B\to K$, define $a\cdot f$ by $(a\cdot f)(b)=\AK{a}{f(b)}$.
The second is by pre-composition, and is a right action: for $a, f$ as above, define $f\cdot a$ by $(f\cdot a)(b)=f(\AB{a}{b})$.
So property TB2 says that $\lambda$ is like a crossed homomorphism from $A$ to the group of functions $B\to K$, but simultaneously twisted by both of these actions.

Property TB3 can be explained in the context of Example~\ref{example:groupextend}.
As usual in group extensions, there is a well defined \emph{outer} action of $B\rtimes A$ on $K$: to act on $k$ by $(b,a)$, lift $(b,a)$ to $G$ and conjugate $k$ by this lift.
The conjugate of $k$ depends on the choice of lift, but the resulting map $B\rtimes A\to \Out(K)$ is well defined.
Using our maps $\iota_1$ and $\iota_2$, we have two ways to build lifts.
By the definition of the product in $B\rtimes A$, we have
\[(1,a)(b,1)=(\AB{a}{b},a)=(\AB{a}{b},1)(1,a).\]
So we may view the outer action of $(\AB{a}{b},a)$ on $K$ as coming from conjugation either by $\iota_1(a)\iota_2(b)$ or by $\iota_2(\AB{a}{b})\iota_1(a)$.
These conjugations are given by $k\mapsto\AK{a}{\BK{b}{k}}$ and $k\mapsto \BK{\AB{a}{b}}{\AK{a}{k}}$ respectively.
Since they define the same outer automorphism, they differ by conjugation by some element; TB3 says that $\lambda(a,b)$ is such an element.
\end{remark}

The following theorem shows that every twisted bilinear map is associated to some group
extension.

\begin{theorem}
\label{theorem:twistedextend}
Let the groups and actions be as above and let $\lambda\co A \times B \rightarrow K$ be a twisted
bilinear map.  Then $\lambda$ is the twisted bilinear map associated to some group $G$, some
short exact sequence 
\[1 \longrightarrow K \longrightarrow G \stackrel{\rho}{\longrightarrow} B \rtimes A \longrightarrow 1,\]
and some homomorphisms $\iota_1\co A \rightarrow G$ and $\iota_2\co B \rightarrow G$.
\end{theorem}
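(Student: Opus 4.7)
The plan is to construct $G$ explicitly as the set $K \times B \times A$ equipped with a carefully chosen multiplication, and then read off all the required structure. Motivated by the introduction's discussion of splicing together direct products, and by the observation that if the desired extension existed then every element of $G$ would be expressible uniquely in the form $k \cdot \iota_2(b) \cdot \iota_1(a)$, I would define multiplication on $G = K \times B \times A$ by
\[(k, b, a) \cdot (k', b', a') = \bigl(k \cdot \BK{b}{\AK{a}{k'} \cdot \lambda(a, b')},\; b \cdot \AB{a}{b'},\; a a'\bigr).\]
This formula is obtained by formally rewriting $k \iota_2(b) \iota_1(a) \cdot k' \iota_2(b') \iota_1(a')$ in the hypothetical extension: push $k'$ leftward past $\iota_1(a)$ and $\iota_2(b)$ using the $A$- and $B$-actions on $K$, then push $\iota_2(b')$ leftward past $\iota_1(a)$ using the key identity $\iota_1(a) \iota_2(b') = \lambda(a, b') \iota_2(\AB{a}{b'}) \iota_1(a)$, which is a direct rearrangement of the definition of an associated twisted bilinear map in Example~\ref{example:groupextend}.

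The main obstacle, and the technical heart of the proof, is verifying associativity of this multiplication. Expanding both $\bigl((k_1, b_1, a_1)(k_2, b_2, a_2)\bigr)(k_3, b_3, a_3)$ and $(k_1, b_1, a_1)\bigl((k_2, b_2, a_2)(k_3, b_3, a_3)\bigr)$, the $A$-coordinates agree trivially and the $B$-coordinates agree by associativity in $B \rtimes A$, but the $K$-coordinates produce long expressions involving $\lambda$, both actions on $K$, and the action of $A$ on $B$. In this comparison, TB1 handles the factor $\lambda(a_1, b_2 \AB{a_2}{b_3})$ appearing on the right-hand side by splitting it into two pieces; TB2 handles the factor $\lambda(a_1 a_2, b_3)$ appearing on the left-hand side similarly; and TB3 reconciles the remaining mismatch by converting expressions of the form $\AK{a}{\BK{b}{k}}$ into $\BK{\AB{a}{b}}{\AK{a}{k}}$ at the cost of a conjugation by $\lambda(a, b)$, which is exactly the piece needed to make the two sides match. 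A brief side computation, using TB1 with $b_1 = b_2 = 1_B$ and TB2 with $a_1 = a_2 = 1_A$, shows $\lambda(a, 1_B) = \lambda(1_A, b) = 1_K$, from which it is immediate that $(1_K, 1_B, 1_A)$ is a two-sided identity and a two-sided inverse can be written down explicitly by solving the coordinate equations in turn.

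Once $G$ is a group, the remaining verifications are formal. The projection $\rho \colon G \to B \rtimes A$ given by $\rho(k, b, a) = (b, a)$ is a surjective homomorphism with kernel $\{(k, 1_B, 1_A) : k \in K\}$, and the inclusion $k \mapsto (k, 1_B, 1_A)$ identifies this kernel with $K$ as a group. Setting $\iota_1(a) = (1_K, 1_B, a)$ and $\iota_2(b) = (1_K, b, 1_A)$, the multiplication rule collapses to $\iota_1(a)\iota_1(a') = (1_K, 1_B, aa')$ and $\iota_2(b)\iota_2(b') = (1_K, bb', 1_A)$, so both are homomorphisms, and by construction $\rho \circ \iota_1$ and $\rho \circ \iota_2$ are the standard inclusions. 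Finally, to see that the twisted bilinear map associated to this data is the original $\lambda$, one computes $\iota_1(a)\iota_2(b)\iota_1(a)^{-1}\iota_2(\AB{a}{b})^{-1}$ directly from the multiplication formula; the $B$- and $A$-coordinates telescope to $1_B$ and $1_A$, and the $K$-coordinate unwinds to $\lambda(a, b)$, completing the proof.
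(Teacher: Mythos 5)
Your proposal is correct and follows essentially the same approach as the paper: the multiplication formula $(k,b,a)(k',b',a') = \bigl(k\cdot\BK{b}{\AK{a}{k'}\cdot\lambda(a,b')},\, b\cdot\AB{a}{b'},\, aa'\bigr)$ is literally the paper's nonabelian cocycle multiplication once one unpacks $\phi(q)(k)=\BK{b}{\AK{a}{k}}$ and $\gamma(q_1,q_2)=\BK{b_1}{\lambda(a_1,b_2)}$, and the direct verification of associativity you sketch via TB1--TB3 is precisely the content of the paper's Claims 1 and 2. The only cosmetic difference is that the paper outsources the final step (``cocycle conditions imply the multiplication is a group law'') to the Eilenberg--MacLane machinery, whereas you propose to check associativity by hand, which amounts to the same computation.
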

\begin{proof}
Set $Q = B \rtimes A$.
We will construct $G$ using the theory of nonabelian group extensions sketched in
\cite[\S IV.6]{BrownCohomology} and proven in detail in \cite{EilenbergMacLane}.  This
machine needs two inputs.
\begin{compactitem}
\item The first is a set map $\phi\co Q \rightarrow \Aut(K)$ satisfying 
$\phi(1) = \text{id}$.  
For $q \in Q$, we define $\phi(q) \in \Aut(K)$ as follows.
We can uniquely write $q = ba$ with $b \in B$ and $a \in A$.  We then define
\[\phi(q)(k) = \BK{b}{\AK{a}{k}} \quad \quad (k \in K).\]
\item The second is a set map $\gamma\co Q \times Q \rightarrow K$ satisfying
$\gamma(1,q) = \gamma(q,1) = 1$ for all $q \in Q$, which we define as follows.
Consider $q_1,q_2 \in Q$.  We can uniquely write $q_1 = b_1 a_1$ and
$q_2 = b_2 a_2$ with $b_1,b_2 \in B$ and $a_1, a_2 \in A$.  We then define
\[\gamma(q_1,q_2) = \BK{b_1}{\lambda(a_1,b_2)}.\]
\end{compactitem}
We remark that these pieces of data are not homomorphisms.  They must satisfy two key identities which
we will verify below in Claims \ref{claim:extend1} and \ref{claim:extend2}.  
%Postponing these verifications
%for the moment, the output of the machine is as follows.  
We postpone these verifications momentarily to explain the output of the machine.

Let $G$ be the set of pairs $(k,q)$ with $K \in K$
and $q \in Q$.  Define a multiplication in $G$ via the formula
\[(k_1,q_1)(k_2,q_2) = \big(k_1 \cdot \phi(q_1)(k_2) \cdot \gamma(q_1,q_2), q_1 q_2\big).\]
The machine says that this $G$ is a group (the purpose of the postponed identities is to show that the above
multiplication is associative).  It clearly lies in a short exact sequence
\[1 \longrightarrow K \longrightarrow G \stackrel{\rho}{\longrightarrow} B \rtimes A \longrightarrow 1,\]
and we can define the desired homomorphisms $\iota_1\co A \rightarrow G$ and $\iota_2\co B \rightarrow G$
via the formulas $\iota_1(a) = (1,(1,a))$ and $\iota_2(b) = (1,(b,1))$.  An easy calculation shows that
the conclusions of the theorem are satisfied.

It remains to verify the two needed identities, which are as follows.

\begin{claimsa}
\label{claim:extend1}
For $q_1, q_2 \in Q$ and $k \in K$, we have
\[\phi(q_1)(\phi(q_2)(k)) = \gamma(q_1, q_2) \cdot \phi(q_1 q_2)(k) \cdot \gamma(q_1,q_2)^{-1}.\]
\end{claimsa}

For $i=1,2$, write $q_i = b_i a_i$ with $b_i \in B$ and $a_i \in A$, so
$q_1 q_2 = (b_1 \AB{a_1}{b_2})(a_1 a_2)$.  We then have
\[\begin{split}
\gamma(q_1, q_2) \cdot \phi(q_1 q_2)(k) \cdot \gamma(q_1,q_2)^{-1}
&= \BK{b_1}{\lambda(a_1,b_2)} \cdot \BK{b_1 \AB{a_1}{b_2}}{\AK{a_1 a_2}{k}} \cdot \BK{b_1}{\lambda(a_1,b_2)}^{-1}\\
&= \BK{b_1}{\lambda(a_1,b_2) \cdot \BK{\AB{a_1}{b_2}}{\AK{a_1}{\AK{a_2}{k}}} \cdot \lambda(a_1,b_2)^{-1}}.
\end{split}\]
Property TB3 of a twisted bilinear map says that this equals
\[\BK{b_1}{\AK{a_1}{\BK{b_2}{\AK{a_2}{k}}}} = \phi(q_1)(\phi(q_2)(k)),\]
as claimed.

\begin{claimsa}
\label{claim:extend2}
For $q_1, q_2, q_3 \in Q$, we have
$\gamma(q_1,q_2) \cdot \gamma(q_1 q_2, q_3) = \phi(q_1)(\gamma(q_2,q_3)) \cdot \gamma(q_1,q_2 q_3)$.
\end{claimsa}

For $i=1,2,3$, write $q_i = b_i a_i$ with $b_i \in B$ and $a_i \in A$.
We begin by examining the left side of the purported equality.  Since
$q_1 q_2 = (b_1 \AB{a_1}{b_2})(a_1 a_2)$, it equals
\[\begin{split}
\gamma(q_1,q_2) \cdot \gamma(q_1 q_2, q_3) 
&= \BK{b_1}{\lambda(a_1,b_2)} \cdot \BK{b_1 \AB{a_1}{b_2}}{\lambda(a_1 a_2, b_3)} \\
&= \BK{b_1}{\lambda(a_1,b_2) \cdot \BK{\AB{a_1}{b_2}}{\lambda(a_1 a_2, b_3)}}.
\end{split}\]
Using property TB2 of a twisted bilinear map, this equals
\begin{equation}
\label{eqn:intermediate1}
\BK{b_1}{\lambda(a_1,b_2) \cdot \BK{\AB{a_1}{b_2}}{\AK{a_1}{\lambda(a_2,b_3)} \cdot \lambda(a_1,\AB{a_2}{b_3})}} 
\end{equation}
Property TB3 of a twisted bilinear map with $a = a_1$ and $b = b_2$ and 
$k = \lambda(a_2,b_3)$ says that
\[\lambda(a_1,b_2) \cdot \BK{\AB{a_1}{b_2}}{\AK{a_1}{\lambda(a_2,b_3)}} = \AK{a_1}{\BK{b_2}{\lambda(a_2,b_3)}} \cdot \lambda(a_1,b_2).\]
Plugging this into \eqref{eqn:intermediate1} (and remembering to distribute the 
$\beta_{\AB{a_1}{b_2}}$ over the last term), we get
\begin{equation}
\label{eqn:intermediate2}
\BK{b_1}{\AK{a_1}{\BK{b_2}{\lambda(a_2,b_3)}} \cdot \lambda(a_1,b_2) \cdot \BK{\AB{a_1}{b_2}}{\lambda(a_1,\AB{a_2}{b_3})}}.
\end{equation}
We now turn to the right hand side of the purported equality.  Since
$q_2 q_3 = (b_2 \AB{a_2}{b_3})(a_2 a_3)$, it equals
\[\BK{b_1}{\AK{a_1}{\BK{b_2}{\lambda(a_2,b_3)}}} \cdot \BK{b_1}{\lambda(a_1,b_2 \AB{a_2}{b_3})} = \BK{b_1}{\AK{a_1}{\BK{b_2}{\lambda(a_2,b_3)}} \cdot \lambda(a_1,b_2 \AB{a_2}{b_3})}.\]
Property TB1 of a twisted bilinear map says that this equals
\begin{equation}
\label{eqn:intermediate3}
\BK{b_1}{\AK{a_1}{\BK{b_2}{\lambda(a_2,b_3)}} \cdot \lambda(a_1,b_2) \cdot \BK{\AB{a_1}{b_2}}{\lambda(a_1,\AB{a_2}{b_3})}}.
\end{equation}
Since \eqref{eqn:intermediate2} and \eqref{eqn:intermediate3} are equal, the claim follows.
\end{proof}

\subsection{Presentations of \texorpdfstring{$\Aut(F_n)$}{Aut(Fn)} and \texorpdfstring{$\AutFB{n}{1}$}{An1}}
\label{section:presentations}

This is the third and final section of preliminaries for the proof of Theorem \ref{maintheorem:lpresentation}.  In it, we
give presentations for the groups $\Aut(F_n)$ and $\AutFB{n}{1}$.  

We begin with $\Aut(F_n)$.  Recall that $X=\{x_1,\ldots,x_n\}$ is the standard basis for $F_n$.  The presentation
for $\Aut(F_n)$ we will use has three classes of generators.
\begin{compactitem}
\item For $\alpha \in \{1,-1\}$ and distinct $x_a, x_b \in X$, we need the automorphisms $\Mul{x_a^{\alpha}}{x_b}$ defined
in the introduction.  Recall that their characteristic properties are that
\[\Mul{x_a^{\alpha}}{x_b}(x_a^{\alpha}) = x_b x_a^{\alpha} \quad \text{and} \quad \Mul{x_a^{\alpha}}{x_b}(x_c) = x_c\]
for $x_c \in X$ with $x_c \neq x_a$.  The elements $\Mul{x_a^{\alpha}}{x_b}$ will be called
{\em transvections}.
\item For distinct $x_a,x_b \in X$, we will need the automorphisms $\Swap{a}{b}$ defined via
the formula 
\[\Swap{a}{b}(x_c) = \begin{cases}
x_b & \text{if $c = a$},\\
x_a & \text{if $c = b$},\\
x_c & \text{otherwise} \end{cases} \quad \quad \quad (1 \leq c \leq n).\]
The elements $\Swap{a}{b}$ will be called {\em swaps}.
\item For $x_a \in X$, we will need the automorphisms $\Inv{a}$ defined via the formula
\[\Inv{a}(x_b) = \begin{cases}
x_b^{-1} & \text{if $b=a$},\\
x_b & \text{otherwise} \end{cases} \quad \quad \quad (1 \leq b \leq n).\]
The elements $\Inv{a}$ will be called {\em inversions}.
\end{compactitem}
Let $S_A$ be the set consisting of the above generators.  The set $S_A$ does not contain elements
of the form $\Mul{x_a^{\alpha}}{x_b^{-1}}$, but we will frequently use $\Mul{x_a^{\alpha}}{x_b^{-1}}$
as an alternate notation for $\Mul{x_a^{\alpha}}{x_b}^{-1}$.
We then have the following theorem
of Nielsen.

\begin{table}
\begin{tabular}{p{0.95\textwidth}}
\hline
\begin{compactitem}
\item[N1.\ ] relations for the subgroup generated by inversions and swaps, a signed permutation group:
\begin{compactitem}
\item $\Inv{a}^2=1$ and $[\Inv{a},\Inv{b}]=1$,
\item
$\Swap{a}{b}^2=1$, $[\Swap{a}{b},\Swap{c}{d}]=1$, and
$\Swap{a}{b}\Swap{b}{c}\Swap{a}{b}^{-1}=\Swap{a}{c}$,
\item $\Swap{a}{b}\Inv{a}\Swap{a}{b}^{-1}=\Inv{b}$ and $[\Swap{a}{b},\Inv{c}]=1$;
\end{compactitem}
\item[N2.\ ] relations for conjugating transvections by inversions and swaps, coming from the natural action of inversions and swaps on $\{x_1,\dotsc,x_n\}$:
\begin{compactitem}
\item $\Swap{a}{b}\Mul{x_c^\gamma}{x_d}\Swap{a}{b}^{-1}=\Mul{\Swap{a}{b}(x_c^\gamma)}{\Swap{a}{b}(x_d)}$ even if $\{a,b\}\cap\{c,d\}\neq\varnothing$,
\item $\Inv{a}\Mul{x_c^\gamma}{x_d}\Inv{a}^{-1}=\Mul{\Inv{a}(x_c^\gamma)}{\Inv{a}(x_d)}$ even if $a\in\{c,d\}$;
\end{compactitem}
\item[N3.\ ]
$\Mul{x_a^{-\alpha}}{x_b}^\beta\Mul{x_b^\beta}{x_a}^\alpha\Mul{x_a^\alpha}{x_b}^{-\beta}=\Inv{b}\Swap{a}{b}$;
\item[N4.\ ] $[\Mul{x_a^\alpha}{x_b},\Mul{x_c^\gamma}{x_d}]=1$ with $a,b,c,d$ not necessarily all distinct, such that $a\neq b$, $c\neq d$, $x_a^\alpha\notin\{x_c^\gamma, x_d, x_d^{-1}\}$ and $x_c^\gamma\notin\{x_b, x_b^{-1}\}$;
\item[N5.\ ] $\Mul{x_b^\beta}{x_a}^\alpha\Mul{x_c^\gamma}{x_b}^{\beta}=\Mul{x_c^\gamma}{x_b}^{\beta}\Mul{x_b^\beta}{x_a}^\alpha\Mul{x_c^\gamma}{x_a}^\alpha$.
\end{compactitem}\\
\hline
\end{tabular}
\caption{Nielsen's relations for $\Aut(F_n)$ consist of the set $R_A$ of relations listed above.
The letters $a,b,c,d$ are elements of $\{1,\ldots,n\}$ (assumed distinct unless otherwise stated) and 
$\alpha,\beta,\gamma, \in \{1,-1\}$.}
\label{table:autfnpresentation}
\end{table}

\begin{theorem}[Nielsen~\cite{NielsenPresentation}]
\label{theorem:autfnpresentation}
The group $\Aut(F_n)$ has the presentation $\Pres{S_A}{R_A}$, where $R_A$ is given in Table~\ref{table:autfnpresentation}.
\end{theorem}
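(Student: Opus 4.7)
The plan is to follow Nielsen's classical approach, which I would organize into three steps: checking the relations hold, proving the generators generate, and proving the relations are complete. For a modern treatment I might alternatively invoke the action of $\Aut(F_n)$ on the spine of Auter space and extract a presentation via Brown's method, but Nielsen's direct argument gives exactly the generators and relations stated.

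First I would verify that each relation in $R_A$ holds in $\Aut(F_n)$. Each one is checked by evaluating both sides on the basis $X=\{x_1,\ldots,x_n\}$. The signed-permutation relations N1 and the conjugation relations N2 express how swaps and inversions act by permuting and inverting basis letters, so they reduce to statements about the symmetric group $\mathbb{Z}/2 \wr S_n$ together with its action on transvections. Relation N3 is the one genuinely nontrivial identity: it encodes the fact that three transvections of a particular shape compose to a signed swap, and it is the free-group analogue of the standard $\SL_2(\Z)$ relation $\left(\begin{smallmatrix}1&1\\0&1\end{smallmatrix}\right)\left(\begin{smallmatrix}1&0\\-1&1\end{smallmatrix}\right)\left(\begin{smallmatrix}1&1\\0&1\end{smallmatrix}\right)$ that yields a signed permutation. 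Relations N4 and N5 are the elementary commutator/Steinberg-type identities.

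Next I would prove that $S_A$ generates $\Aut(F_n)$ via Nielsen reduction. Given any $\phi \in \Aut(F_n)$, set the complexity $c(\phi) = \sum_{i=1}^n |\phi(x_i)|$, where $|{\cdot}|$ denotes reduced word length. If $c(\phi) > n$ then a Nielsen-type argument shows that some transvection $\Mul{x_a^\alpha}{x_b}$ (or its inverse) can be post-composed with $\phi$ to strictly decrease $c$. When $c(\phi) = n$, the tuple $(\phi(x_1),\ldots,\phi(x_n))$ is Nielsen-reduced of total length $n$, which forces it to be a signed permutation of $X$, and hence $\phi$ is a product of inversions and swaps. Unwinding the reduction expresses $\phi$ as a product of elements of $S_A$.

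The heart of the proof, and the main obstacle, is completeness of the relations. Let $\Gamma$ be the group defined by $\Pres{S_A}{R_A}$; we have a natural surjection $\Gamma \twoheadrightarrow \Aut(F_n)$, and we must show it is injective. I would use the peak-reduction method: any word $w$ in $F(S_A)$ evaluating to the identity in $\Aut(F_n)$ traces out a sequence of automorphisms with some complexity profile, and one uses the relations of $R_A$ to successively eliminate ``peaks" in this profile (places where applying one generator increases complexity and the next decreases it). The case analysis amounts to showing that every pair of generators that produces a peak can be rewritten, modulo the relations, into a peak-free form; the relations N3, N4, and N5 are precisely tailored to cover the possible interactions between transvections, and N1, N2 handle the signed-permutation pieces. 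After full peak reduction, $w$ represents a word in the signed-permutation subgroup evaluating to the identity, and N1 suffices to trivialize this in $\Gamma$. Verifying that the listed relations really do cover every case of peak reduction is the delicate combinatorial step; Nielsen's original paper carries this out explicitly.
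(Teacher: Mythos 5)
The paper does not prove this theorem: it is attributed to Nielsen~\cite{NielsenPresentation} and used as a black box, so there is no argument in the paper to compare yours against. Your sketch is a reasonable outline of the standard proof, and the three-part structure (relations hold, generation, completeness) is the right one. A few points worth flagging. For the generation step, the standard Nielsen-reduction argument operates on the tuple $(\phi(x_1),\ldots,\phi(x_n))$ by Nielsen transformations, which corresponds to \emph{pre}-composing $\phi$ with transvections ($\phi\mapsto\phi\circ T$), not post-composing; post-composition changes letters uniformly across the tuple and does not directly implement the length-reduction step you describe. For completeness, you invoke peak reduction; note that the Whitehead--McCool peak-reduction machinery is set up with respect to the full set of Whitehead automorphisms, which is strictly larger than $S_A$, so if you take that route you get a presentation on a bigger generating set and then have to Tietze-reduce back to Nielsen's generators and relations. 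Nielsen's own 1924 argument is a direct combinatorial reduction tailored to these specific generators, which is why the paper cites it rather than a peak-reduction source. Your outline is honest that you are not carrying out the case analysis; that case analysis is the entire content of the theorem, so as written this is a correct road map rather than a proof.
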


We now turn to $\AutFB{n}{1}$.  Recall that this is a subgroup of $\Aut(F_{n,1})$, where $F_{n,1}$ is the free
group on $\{x_1,\ldots,x_n,y\}$.  We will use a presentation that is due to Jensen--Wahl~\cite{JensenWahl}.  See
\cite[Theorem 5.2]{DayPutmanBirman} for a small correction to Jensen--Wahl's original statement.

\begin{theorem}[Jensen--Wahl~\cite{JensenWahl}]
\label{th:JensenWahl}
The group $\AutFB{n}{1}$ has the presentation whose generators are the union of $S_A$ with the set
\[\Set{$\Mul{x_a^\alpha}{y}$,$\Con{y}{x_a}$}{$x_a \in X$, $\alpha\in\{1,-1\}$}\]
and whose relations are those appearing in Table~\ref{table:an1presentation}.
\end{theorem}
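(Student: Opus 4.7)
The plan is to derive the presentation from the split Birman exact sequence \eqref{eqn:besaut},
\[1 \longrightarrow \BKer{n}{1} \longrightarrow \AutFB{n}{1} \stackrel{\pi}{\longrightarrow} \Aut(F_n) \longrightarrow 1,\]
together with Nielsen's presentation of $\Aut(F_n)$ from Theorem~\ref{theorem:autfnpresentation}. The canonical splitting $\sigma\co \Aut(F_n) \to \AutFB{n}{1}$ that fixes $y$ pointwise identifies each Nielsen generator with an element of $\AutFB{n}{1}$, and every relator in $R_A$ remains a relator after lifting because $\sigma$ is a homomorphism. The generating set for $\AutFB{n}{1}$ is then $\sigma(S_A)$ together with $\{\Mul{x_a^\alpha}{y}, \Con{y}{x_a}\}$; any other natural kernel generator (such as $\Con{x_a}{y}$) can be expressed as a word in this set, for example $\Con{x_a}{y} = \Mul{x_a^{-1}}{y}\Mul{x_a}{y}$, as one checks by direct computation on the basis $\{x_1,\dots,x_n,y\}$.

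I would organize the relations into three families. First, the Nielsen relations $R_A$. Second, ``action'' relations of the form $s\cdot t\cdot s^{-1} = w(T)$ for each $s \in S_A$ and each kernel generator $t$, where $w(T)$ is a word in the kernel generators obtained by explicit composition of the corresponding automorphisms on the basis. Third, a finite list of relations purely among the kernel generators, chosen so that together with the action relations they suffice to present $\BKer{n}{1}$ as a normal subgroup of $\AutFB{n}{1}$. Once these three families are in place, the classical Reidemeister--Schreier machinery for the middle term of a split group extension produces a presentation of $\AutFB{n}{1}$ with the advertised set $S_A \cup \{\Mul{x_a^\alpha}{y}, \Con{y}{x_a}\}$ of generators.

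The main obstacle is the third family. Since $\BKer{n}{1}$ is not finitely presentable for $n \geq 2$ (proved in \cite{DayPutmanBirman}), one cannot extract the third family from an ambient finite presentation of $\BKer{n}{1}$. Instead one must exploit normality and identify a finite list of kernel relations whose $\AutFB{n}{1}$-conjugates (equivalently, $\sigma(S_A)$-conjugates, modulo the other relations) generate every relation among the kernel generators. Once a candidate finite list is in hand, the verification that the resulting presented group $\Gamma$ coincides with $\AutFB{n}{1}$ proceeds by building the obvious surjection $\Gamma \to \AutFB{n}{1}$ and bounding $\Gamma$ from the other direction: the action relations force the subgroup of $\Gamma$ generated by the kernel generators to be normal, so $\Gamma$ fits into a short exact sequence whose quotient is (via $R_A$) a quotient of $\Aut(F_n)$, and one uses the third family together with a normal-form or induction argument in the kernel to bound this subgroup by $\BKer{n}{1}$. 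Identifying the correct finite list of kernel-only relations and carrying out this verification is the substantive combinatorial content of Jensen--Wahl's argument, and in practice requires the kind of case analysis one sees in the action relations of Table~\ref{table:gisnormal}.
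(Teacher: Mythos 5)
This theorem is a cited result, not one the paper proves: it appears with the attribution ``[Jensen--Wahl~\cite{JensenWahl}]'' and a pointer to a small correction in \cite[Theorem~5.2]{DayPutmanBirman}, so there is no in-paper proof to compare your proposal against.

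Taken on its own terms, your sketch is a reasonable high-level outline but is not a proof, and it also diverges from Jensen--Wahl's actual method. Jensen--Wahl obtain their presentation topologically: they build a contractible complex (a variant of the spine of Outer space for graphs with leaves) on which $\AutFB{n}{1}$ acts, and read off the presentation from the quotient and the cell stabilizers. Your proposal is instead purely algebraic, via the split extension $1 \to \BKer{n}{1} \to \AutFB{n}{1} \to \Aut(F_n) \to 1$ together with Nielsen's presentation of the quotient. That route could in principle work, and your observation that one cannot simply quote ``presentation of a split extension'' because $\BKer{n}{1}$ is not finitely presented is the right thing to flag. But you then declare the key step --- producing the finite list of kernel-side relations and proving that, together with the $S_A$-action relations, they force the kernel subgroup of the presented group $\Gamma$ to map injectively onto $\BKer{n}{1}$ --- to be exactly ``the substantive combinatorial content of Jensen--Wahl's argument'' and do not carry it out. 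That step is the entire theorem: without a normal-form or bounding argument in the kernel, the surjection $\Gamma \to \AutFB{n}{1}$ is all you have, and the claim $\Gamma \cong \AutFB{n}{1}$ is unproven. Note also that the finite list actually used in Table~\ref{table:an1presentation} is not simply ``all conjugation formulas $s t s^{-1} = w$'': Q2 through Q5 are a carefully chosen subset, and verifying their sufficiency is nontrivial. (A small plus: your identity $\Con{x_a}{y} = \Mul{x_a^{-1}}{y}\Mul{x_a}{y}$ is correct and explains why $\Con{x_a}{y}$ need not appear among the generators.)

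In short: your route is genuinely different from Jensen--Wahl's (algebraic vs.\ topological), and it is not obviously wrong, but as written it does not establish the theorem --- it reduces it to the hard part and stops there. If you wanted to pursue this route seriously, a template to study is precisely the argument this paper gives for Theorem~\ref{maintheorem:lpresentation}: there the authors face the same issue (an infinitely presented kernel inside a finitely presented ambient group), resolve it by constructing a twisted-bilinear-map extension $\Delta_n$ of the purported presentation and then showing $\Delta_n \to \AutFB{n}{1}$ is an isomorphism by comparing with the Jensen--Wahl presentation. An analogous extension-building argument would be what is needed to complete your sketch.
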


\begin{table}
\begin{tabular}{p{0.95\textwidth}}
\hline
\begin{compactitem}
\item[Q1.\ ] Nielsen's relations among $S_A$ from Table \ref{table:autfnpresentation},
\item[Q2.\ ] Commuting relations:
\begin{itemize}
\item $[\Mul{x_a^\alpha}{y},\Mul{x_b^\beta}{y}]=1$ if $x_a^\alpha\neq x_b^\beta$,
\item $[\Mul{x_a^\alpha}{x_b},\Mul{x_c^\gamma}{y}]=1$ if $x_a^\alpha\neq x_c^\gamma$,
\item $[\Mul{x_a^\alpha}{x_b},\Con{y}{x_c}]=1$ if $c\neq a$.
\end{itemize}
\item[Q3.\ ] The obvious analogues of the N2 relations from Table \ref{table:autfnpresentation} giving
the effect of conjugating $\Con{y}{x_a}$ and $\Mul{x_a^\alpha}{y}$ by swaps and inversions,
\item[Q4.\ ] $\Mul{x_a^\alpha}{x_b}^{-\beta}\Mul{x_b^\beta}{y}\Mul{x_a^\alpha}{x_b}^\beta = \Mul{x_a^\alpha}{y}\Mul{x_b^\beta}{y}$, and
\item[Q5.\ ] $\Con{y}{x_a}^{-\alpha}\Mul{x_a^{-\alpha}}{y}\Con{y}{x_a}^\alpha=\Mul{x_a^\alpha}{y}^{-1}$.
\end{compactitem}\\
\hline
\end{tabular}
\caption{Jensen--Wahl's relations for $\AutFB{n}{1}$ consist of the relations above.  
The letters $a,b,c$ are elements of $\{1,\ldots,n\}$
(assumed distinct unless otherwise stated) and $\alpha,\beta,\gamma, \in \{1,-1\}$.}
\label{table:an1presentation}
\end{table}

\section{A finite L-presentation for \texorpdfstring{$\BKerIA{n}{1}$}{KIAn1}}
\label{section:lpresentation}

This section contains the proof of Theorem \ref{maintheorem:lpresentation}, which
asserts that $\BKerIA{n}{1}$ has a finite L-presentation.  We begin in
\S \ref{section:lpresentationstatement} by describing the generators, relations,
and endomorphisms which make up our L-presentation.  Next, in 
\S \ref{section:lpresentationextension} we construct the data needed to use
the theory of twisted bilinear maps to construct an appropriate extension
of our purported presentation for $\BKerIA{n}{1}$.  Finally, in
\S \ref{section:lpresentationproof} we prove that our presentation
is complete.

The proofs of several of our lemmas will depend on computer calculations.  These
computer calculations will be discussed in \S \ref{section:computer}.

Just like in \S \ref{section:lpresentationprelim}, we will denote
the free basis for $F_{n,1}$ by $\{x_1,\ldots,x_n,y\}$.

\subsection{Statement of L-presentation}
\label{section:lpresentationstatement}

In this section, we will describe the generators, relations, and endomorphisms that make up the finite L-presentation for 
$\BKerIA{n}{1}$ whose existence is asserted by Theorem \ref{maintheorem:lpresentation}.  To help us keep
track of the role that our symbols are playing, we will change the font for the generators of
$\BKerIA{n}{1}$ and use
\[S_K=\Set{$\kCon{y}{x_a}$, $\kCon{x_a}{y}$}{$x_a\in X$} \cup \Set{$\kMul{x_a^\alpha}{[y^\epsilon,x_b^\beta]}$}{$x_a,x_b\in X$, $x_a\neq x_b$, $\alpha,\beta,\epsilon\in\{1,-1,\}$}\]
as our generating set.  We remark that elements of the form $\kMul{x_a^\alpha}{[x_b^\beta,y^\epsilon]}$ 
are not included in $S_K$; however, we will
frequently use the symbol $\kMul{x_a^\alpha}{[x_b^\beta,y^\epsilon]}$ as a synonym for $\kMul{x_a^\alpha}{[y^\epsilon,x_b^\beta]}^{-1}$, which is
the inverse of an element of $S_K$. 

Next, we explain our endomorphisms $E_K$ for our L-presentation.  These endomorphisms are indexed by the generators for
$\AutFB{n}{1}$ given by Theorem \ref{th:JensenWahl} that do not lie in $\BKerIA{n}{1}$.  More precisely, define
\[S_Q=\Set{$\Swap{a}{b}$, $\Inv{a}$, $\Mul{x_a}{y}$, $\Mul{x_a^\alpha}{x_b}$}{$x_a, x_n \in X$, $x_a \neq x_b$, $\alpha\in\{1,-1\}$}.\]
We use $S_Q^{\pm1}$ to denote $S_Q\cup\{s^{-1}|s\in S_Q\}$ (and similarly for other sets).  We remark that
in $S_Q$, we regard $\Swap{a}{b}$ and $\Swap{b}{a}$ as being the same element.  

We now define a function $\phi\co S_Q^{\pm1}\to \End(F(S_K))$ (see Lemma 
\ref{lemma:tphiproperty} below for an elucidation of the purpose of this
definition).  By the universal property of a free group, 
to do this it is enough to give $\phi(s)(t)$ for each
choice of $s\in S_Q^{\pm1}$ and $t\in S_K$.  
Once we have given these formulas, we define $E_K$ to be the image of the map $\phi$.  
In our defining formulas, we use $x_a,x_b,x_c,\dotsc$ for elements of $X$ and $\alpha,\beta,\gamma,\epsilon,\dotsc$ for elements of $\{1,-1\}$.
Elements with distinct subscripts are assumed to be distinct unless noted.

The action of swaps and inversions through $\phi$ is by acting on the elements of $X$ indexing our generators: if $s$ is a swap or inversion, then
\[\phi(s)(\kCon{x_a}{y})=\kCon{s(x_a)}{y},\quad \phi(s)(\kCon{y}{x_a})=\kCon{y}{s(x_a)},\quad\text{and}\quad \phi(s)(\kMul{x_a^\alpha}{[y^\epsilon,x_b^\beta]})=\kMul{s(x_a^\alpha)}{[y^\epsilon,s(x_b^\beta)]}.\]
Here we interpret
\[\kCon{x_a^{-1}}{y}=\kCon{x_a}{y}\quad\text{and}\quad\kCon{y}{x_a^{-1}}=\kCon{y}{x_a}^{-1}.\]
Furthermore, for $s$ a swap or inversion we define $\phi(s^{-1})(t)=\phi(s)(t)$ for any $t\in S_K$.

If $s\in S_Q$ is of the form $\Mul{x_a^\alpha}{y}$, we define $\phi(s)(t)=\phi(s^{-1})(t)=t$ if $t\in S_K$ is anything of the form
\[\kCon{x_a}{y},\quad \kCon{x_b}{y},\quad \kMul{x_a^{-\alpha}}{[y^\epsilon,x_b^\beta]},\quad \text{or}\quad \kMul{x_b^{\beta}}{[y^\epsilon,x_c^\gamma]}.\]
If $s\in S_Q$ is of the form $\Mul{x_a^\alpha}{x_b}$, we define $\phi(s)(t)=\phi(s^{-1})(t)=t$ if $t\in S_K$ is anything of the form
\[\kCon{x_c}{y},\quad \kCon{y}{x_b},\quad \kCon{y}{x_c},\quad \kMul{x_a^{-\alpha}}{[y^\epsilon,x_b^\beta]},\quad \kMul{x_a^{-\alpha}}{[y^\epsilon,x_c^\gamma]},\quad \kMul{x_c^\gamma}{[y^\epsilon,x_b^\beta]},\quad \text{or}\quad \kMul{x_c^\gamma}{[y^\epsilon,x_d^\delta]}.\]
The other cases for $\phi(s)(t)\in F(S_K)$, for $s\in S_Q^{\pm1}$ and $t\in S_K$, are given in Table~\ref{table:definephi}.

\begin{table}
\tiny
\centering
\begin{tabular}{cc|c}
$s\in S_Q^{\pm1}$ & $t\in S_K$ & $\phi(s)(t)$ \\
\hline
$\Mul{x_a}{y}^\epsilon$ & $\kCon{y}{x_b}$ & $\kCon{y}{x_b}\kMulcomm{x_a}{y^{-\epsilon}}{x_b^{-1}}$ \\
 & $\kCon{y}{x_a}$ & $\kCon{x_a}{y}^\epsilon\kCon{y}{x_a}$ \\
 & $\kMulcomm{x_a}{y^\zeta}{x_b^\beta}$ & $\kCon{x_a}{y}^\epsilon\kMulcomm{x_a}{y^\zeta}{x_b^\beta}\kCon{x_a}{y}^{-\epsilon}$ \\
 & $\kMulcomm{x_b^\beta}{y^\zeta}{x_a}$ & $\kCon{x_a}{y}^\epsilon\kMulcomm{x_b^\beta}{y^\zeta}{x_a}\kCon{x_a}{y}^{-\epsilon}$\\
\hline
$\Mul{x_a^\alpha}{x_b}^\beta$ & $\kCon{y}{x_a}$ & $(\kCon{y}{x_a}^\alpha\kCon{y}{x_b}^\beta)^\alpha$ \\
 & $\kCon{x_a}{y}$ & $\kCon{x_a}{y}\kMulcomm{x_a^\alpha}{x_b^{-\beta}}{y}$ \\
 & $\kCon{x_b}{y}$ & $\kCon{x_b}{y}\kMulcomm{x_a^\alpha}{x_b^{-\beta}}{y^{-1}}$\\
 & $\kMulcomm{x_a^\alpha}{y^\epsilon}{x_c^\gamma}$ & $\kCon{y}{x_c}^\gamma\kMulcomm{x_a^\alpha}{y^\epsilon}{x_b^{-\beta}}\kCon{y}{x_c}^{-\gamma}\kMulcomm{x_a^\alpha}{y^\epsilon}{x_c^\gamma}\kMulcomm{x_a^\alpha}{x_b^{-\beta}}{y^\epsilon}$\\
 & $\kMulcomm{x_b^\beta}{y^\epsilon}{x_c^\gamma}$ & $\kMulcomm{x_b^\beta}{y^\epsilon}{x_c^\gamma}\kMulcomm{x_a^\alpha}{y^\epsilon}{x_b^{-\beta}}\kMulcomm{x_a^\alpha}{x_c^\gamma}{y^\epsilon}\kCon{y^\epsilon}{x_c^\gamma}\kMulcomm{x_a^\alpha}{x_b^{-\beta}}{y^\epsilon}\kCon{y^\epsilon}{x_c}^{-\gamma}$\\
 & $\kMulcomm{x_b^{-\beta}}{y^\epsilon}{x_c^\gamma}$ & $\kMulcomm{x_b^{-\beta}}{y^\epsilon}{x_c^\gamma}\kMulcomm{x_a^\alpha}{y^\epsilon}{x_c^\gamma}$ \\
 & $\kMulcomm{x_c^\gamma}{y^\epsilon}{x_a^\alpha}$ & $\kCon{y}{x_a}^\alpha\kMulcomm{x_c^\gamma}{y^\epsilon}{x_b^\beta}\kCon{y}{x_a}^{-\alpha}\kMulcomm{x_c^\gamma}{y^\epsilon}{x_a^\alpha}$ \\
 & $\kMulcomm{x_c^\gamma}{y^\epsilon}{x_a^{-\alpha}}$ & $\kCon{y}{x_b}^{-\beta}\kMulcomm{x_c^\gamma}{y^\epsilon}{x_a^{-\alpha}}\kMulcomm{x_c^\gamma}{x_b^\beta}{y^\epsilon}\kCon{y}{x_b^\beta}$ \\
 & $\kMulcomm{x_a^\alpha}{y^\epsilon}{x_b^\beta}$ & $\kMulcomm{x_a^\alpha}{x_b^{-\beta}}{y^\epsilon}$\\
 & $\kMulcomm{x_a^\alpha}{y^\epsilon}{x_b^{-\beta}}$ & $\kCon{y}{x_b}^{-\beta}\kMulcomm{x_a^\alpha}{y^\epsilon}{x_b^{-\beta}}\kCon{y}{x_b}^\beta$ \\
 & $\kMulcomm{x_b^\beta}{y^\epsilon}{x_a^\alpha}$  & $\kMulcomm{x_b^{-\beta}}{y^{-\epsilon}}{x_a^\alpha}\kCon{x_b}{y}^\epsilon\kMulcomm{x_a^\alpha}{y^\epsilon}{x_b^{-\beta}}\kCon{x_a}{y}^{-\epsilon}$ \\
 & $\kMulcomm{x_b^{-\beta}}{y^\epsilon}{x_a^\alpha}$ & $\kCon{y}{x_a}^\alpha\kCon{y}{x_b}^\beta\kCon{x_b}{y}^{-\epsilon}\kMulcomm{x_a^\alpha}{x_b^{-\beta}}{y^\epsilon}\kCon{y}{x_b}^{-\beta}\kMulcomm{x_b^{-\beta}}{x_a^{-\alpha}}{y^\epsilon}\kCon{y}{x_a}^{-\alpha}\kCon{x_a}{y}^\epsilon$ \\
 & $\kMulcomm{x_b^\beta}{y^\epsilon}{x_a^{-\alpha}}$ & $\kCon{y}{x_b}^{-\beta}\kCon{y}{x_a}^{-\alpha}\kCon{x_a}{y}^\epsilon\kMulcomm{x_a^\alpha}{x_b^{-\beta}}{y^\epsilon}\kCon{x_b}{y}^{-\epsilon}\kMulcomm{x_b^{-\beta}}{x_a^\alpha}{y^{-\epsilon}}\kCon{y}{x_a}^\alpha\kCon{y}{x_b}^\beta$ \\
 & $\kMulcomm{x_b^{-\beta}}{y^\epsilon}{x_a^{-\alpha}}$ & $\kCon{y}{x_b}^{-\beta}\kCon{y}{x_a}^{-\alpha}\kCon{x_a}{y}^{-\epsilon}\kCon{y}{x_a}^\alpha\kMulcomm{x_b^{-\beta}}{y^\epsilon}{x_a^{-\alpha}}\kCon{y}{x_b}^\beta\kMulcomm{x_a^\alpha}{y^\epsilon}{x_b^{-\beta}}\kCon{x_b}{y}$\\

\end{tabular}
\caption{The defining formulas for $\phi$.  The letters $a,b,c$ are distinct elements of $\{1,\ldots,n\}$
$\alpha,\beta,\gamma,\zeta \in \{1,-1\}$.}
\label{table:definephi}
\end{table}

The key property of the map $\phi$ is as follows.

\begin{lemma}
\label{lemma:tphiproperty}
Let $\Psi\co F(S_K) \rightarrow \BKerIA{n}{1}$ be the natural surjection.  Then
regarding $S_Q$ as a subset of $\AutFB{n}{1}$, we have
\[\Psi(\phi(s)(t)) = s \Psi(t) s^{-1} \quad \quad (s \in S_Q, t \in S_K).\]
\end{lemma}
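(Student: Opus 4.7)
The plan is to verify the identity $\Psi(\phi(s)(t)) = s \Psi(t) s^{-1}$ case by case, using the fact that $\BKerIA{n}{1}$ is a normal subgroup of $\AutFB{n}{1}$ (this follows since it is the kernel of the homomorphism $\rho$ of Lemma~\ref{lemma:relatingkernels}). Thus $s \Psi(t) s^{-1}$ lies in $\BKerIA{n}{1}$, which in turn is a subgroup of the automorphism group $\Aut(F_{n,1})$. Hence both $\Psi(\phi(s)(t))$ and $s \Psi(t) s^{-1}$ are elements of $\Aut(F_{n,1})$, and to prove their equality it suffices to check that they agree as set-maps on the free basis $\{x_1,\dotsc,x_n,y\}$ of $F_{n,1}$. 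This reduces the lemma to a finite collection of word-identities in $F_{n,1}$.

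I would organize the case analysis according to the type of $s \in S_Q$. For $s$ a swap or inversion, the defining formula is that $\phi(s)$ simply relabels subscripts of generators in $S_K$ according to the permutation/inversion action of $s$ on $X$. Since each generator $\kCon{z}{z'}$ and $\kMul{z}{[y^\epsilon,z'^\beta]}$ in $S_K$ is defined in terms of the basis of $F_{n,1}$, conjugation by $s$ also acts by this same relabeling of indices, so these cases follow immediately from the defining formulas of $\Con{\cdot}{\cdot}$ and $\Mul{\cdot}{\cdot}$. For $s = \Mul{x_a^\alpha}{y}^{\pm1}$ or $s = \Mul{x_a^\alpha}{x_b}^{\pm1}$, the verification splits into two subcases: the ``trivial'' cases where $\phi(s)(t)=t$ is asserted implicitly in the prose preceding Table~\ref{table:definephi}, and the nontrivial cases explicitly enumerated in the table.

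In the trivial subcases, one must verify that $s$ commutes with $\Psi(t)$ in $\AutFB{n}{1}$. These all reduce to the observation that two Nielsen-type transformations of $F_{n,1}$ commute whenever the free generators that one acts on nontrivially are disjoint from those appearing in the output of the other; the specified lists (e.g.\ those of the form $\kMul{x_a^{-\alpha}}{[y^\epsilon,x_b^\beta]}$ when $s = \Mul{x_a^\alpha}{y}$) are precisely those satisfying this disjointness. In the nontrivial subcases listed in Table~\ref{table:definephi}, for each row one computes the value of $s \Psi(t) s^{-1}$ on each of $x_1,\dotsc,x_n,y$ directly from the Nielsen formulas, then computes the value of the word $\Psi(\phi(s)(t))$ on the same basis elements by composing the given sequence of $\Con{\cdot}{\cdot}$ and $\Mul{\cdot}{\cdot}$ factors, and checks that the resulting words in $F_{n,1}$ coincide.

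The main obstacle is not conceptual but computational: Table~\ref{table:definephi} contains a substantial number of rows, and each of the more intricate entries (for instance the last few rows involving $\kMulcomm{x_b^{-\beta}}{y^\epsilon}{x_a^{-\alpha}}$) expands to a product of six to eight generators whose combined action on the basis must be expanded and cancelled against the conjugation on the left-hand side. These verifications are mechanical but tedious, and constitute exactly the sort of word-calculation in $F_{n,1}$ that is the natural target of the computer-assisted arguments described in \S\ref{section:computer}; I would carry them out there rather than by hand.
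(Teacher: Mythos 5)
Your overall strategy matches the paper's: observe that both sides of the claimed identity are automorphisms of $F_{n,1}$, so the statement reduces to a finite collection of word-identities on the free basis $\{x_1,\ldots,x_n,y\}$; dispose of swaps and inversions by their defining relabeling action; and relegate the remaining $\Mul{\cdot}{\cdot}$-type cases to the computer verification in \S\ref{section:computer}. (The paper's version of Lemma~\ref{lemma:computerphiproperty} makes one further economy you don't mention — deducing the $s = \Mul{x_a^{-1}}{x_b}$ cases from the $s = \Mul{x_a}{x_b}$ cases by conjugating the entire identity by $\Inv{a}$ — but this is a convenience, not a necessity.)

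One claim in your write-up is not correct as stated and would not survive scrutiny if you tried to use it to avoid the computer check: you assert that the ``trivial'' cases (where $\phi(s)(t)=t$ in the prose preceding Table~\ref{table:definephi}) all follow from a disjoint-support criterion, namely that the letters moved by one transformation are disjoint from the letters occurring in the outputs of the other. This is a sufficient condition for commuting, but several of the listed trivial cases violate it. For example, take $s = \Mul{x_a^{\alpha}}{y}$ and $t = \kCon{x_a}{y}$: both $s$ and $\Psi(t) = \Con{x_a}{y}$ act nontrivially on $x_a$, and $x_a$ appears in both outputs, so your disjointness hypothesis fails; nevertheless they commute (left multiplication by a power of $y$ commutes with conjugation by $y$). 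The same phenomenon occurs with $t = \kMul{x_a^{-\alpha}}{[y^\epsilon,x_b^\beta]}$, where $s$ left-multiplies $x_a^{\alpha}$ and $\Psi(t)$ left-multiplies $x_a^{-\alpha}$ — equivalently, right-multiplies $x_a^{\alpha}$ — so the two commute for the ``left versus right'' reason, again despite overlapping support. Since you are deferring the tabulated cases to the computer anyway, the cleanest fix is simply to include all configurations of $(s,t)$ with $s$ a transvection in the computer check, as the paper does, rather than carving out a hand-argued subclass with a criterion that does not actually apply.
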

\begin{proof}
This is a computer calculation which is described in Lemma \ref{lemma:computerphiproperty}
below.
\end{proof}

We can now give a statement of our L-presentation.  The following theorem (which will be proven in \S \ref{section:lpresentationproof}) is a
more precise version of Theorem \ref{maintheorem:lpresentation}.

\begin{theorem}\label{theorem:preciselpres}
Let $S_K$ and $S_Q$ and $\phi$ be as above and let $R_K^0$ be the set of relations in Table \ref{table:basicrelsbkerian}.
Then the group $\BKerIA{n}{1}$ has the finite L-presentation $\BKerIA{n}{1}=\LPres{S_K}{R_K^0}{\phi(S_Q^{\pm1})}$.
\end{theorem}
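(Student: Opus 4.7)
The plan is as follows. Let $\Gamma_n$ denote the group defined by the putative L-presentation $\LPres{S_K}{R_K^0}{\phi(S_Q^{\pm1})}$. By Lemma \ref{lemma:tphiproperty}, the natural map $F(S_K) \to \BKerIA{n}{1}$ descends to a homomorphism $\pi\co \Gamma_n \to \BKerIA{n}{1}$; this is surjective because $S_K$ contains the generating set of Theorem \ref{maintheorem:generators}. Following the roadmap sketched in the introduction, I would complete the proof by constructing a group $\Delta_n$ fitting into the commutative diagram \eqref{eqn:maindiagram}, then invoking the five-lemma to conclude that $\pi$ is an isomorphism. The construction of $\Delta_n$ is the crux, and it proceeds by splicing the two semidirect products $\Gamma_n \rtimes \Aut(F_n)$ and $\Gamma_n \rtimes \Z^n$ using the twisted bilinear map machinery of \S\ref{section:twistedbilinear}.

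To build the semidirect products, I would first verify that each $f \in \phi(S_Q^{\pm1})$ descends to an automorphism of $\Gamma_n$, and that the resulting automorphisms indexed by the $\Aut(F_n)$-type generators in $S_Q$ satisfy Nielsen's relations from Table \ref{table:autfnpresentation}. This yields a well defined action $\Aut(F_n) \to \Aut(\Gamma_n)$ and hence $\Gamma_n \rtimes \Aut(F_n)$. A parallel check that the automorphisms induced by $\{\Mul{x_a}{y}\}$ commute produces an action of $\Z^n$ and a corresponding semidirect product $\Gamma_n \rtimes \Z^n$. By construction these two actions are compatible with the actions of $\Aut(F_n)$ and $\Z^n$ on $\BKerIA{n}{1}$ inside $\AutFB{n}{1}$ coming from the splittings $\iota_1, \iota_2$ of Lemma \ref{lemma:relatingkernels}.

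The heart of the argument is the construction of a twisted bilinear map $\lambda\co \Aut(F_n) \times \Z^n \to \Gamma_n$ lifting the twisted bilinear map $\lambda_*\co \Aut(F_n) \times \Z^n \to \BKerIA{n}{1}$ associated to the triple $(\AutFB{n}{1}, \iota_1, \iota_2)$ as in Example \ref{example:groupextend}. Since twisted bilinear maps are determined by their values on generators, I would specify $\lambda(s, e_i) \in \Gamma_n$ for each Nielsen generator $s \in S_A$ of $\Aut(F_n)$ and each standard basis vector $e_i$ of $\Z^n$ by choosing an explicit word in $S_K$ representing $[\iota_1(s), \iota_2(e_i)]$ in $\AutFB{n}{1}$, and then extend to arbitrary pairs using properties TB1 and TB2 inductively on word length. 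Once this $\lambda$ is in hand, Theorem \ref{theorem:twistedextend} produces the desired $\Delta_n$ and short exact sequence $1 \to \Gamma_n \to \Delta_n \to \zasdr \to 1$.

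The main obstacle is showing that this recursive definition of $\lambda$ is consistent, i.e.\ that the candidate values extend across the defining relations $R_A$ of $\Aut(F_n)$ and across the abelianness of $\Z^n$, and that the result satisfies TB1--TB3. This is precisely where the finite list $R_K^0$ pays its rent: for every relation in Jensen--Wahl's presentation that involves a nontrivial interaction between the $\Aut(F_n)$ and $\Z^n$ generators, the corresponding identity among words in $S_K$ must be deducible from $R_K^0$ together with the orbit of $R_K^0$ under the monoid generated by $\phi(S_Q^{\pm1})$. I expect this verification to reduce to a finite symbolic enumeration that is handled by the computer calculations described in \S\ref{section:computer}. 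Once $\Delta_n$ is built, comparing its defining presentation with Theorem \ref{th:JensenWahl} yields a homomorphism $\AutFB{n}{1} \to \Delta_n$ which, by construction, is inverse to the natural map $\Delta_n \to \AutFB{n}{1}$; the five-lemma applied to \eqref{eqn:maindiagram} then forces $\pi\co\Gamma_n \to \BKerIA{n}{1}$ to be an isomorphism, proving the theorem.
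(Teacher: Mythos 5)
Your proposal follows essentially the same route as the paper's proof: define $\Gamma_n$ by the L-presentation, obtain a surjection $\Gamma_n \to \BKerIA{n}{1}$, lift the $\Aut(F_n)$- and $\Z^n$-actions and the twisted bilinear map to $\Gamma_n$ (offloading the consistency checks to computer verification), apply Theorem~\ref{theorem:twistedextend} to build $\Delta_n$, construct an explicit inverse using Jensen--Wahl's presentation, and finish with the five lemma. One small inaccuracy: the value $\lambda(s,e_i)$ should be the \emph{twisted} commutator $\iota_1(s)\iota_2(e_i)\iota_1(s)^{-1}\iota_2(\AB{s}{e_i})^{-1}$ rather than the ordinary commutator $[\iota_1(s),\iota_2(e_i)]$, and one also needs to independently check (as the paper does in Lemma~\ref{lemma:computerpenultimatecalc}) that the basic relations $R_K^0$ actually hold in $\BKerIA{n}{1}$ before Lemma~\ref{lemma:tphiproperty} can be leveraged to get the extended relations; but these are minor omissions in an otherwise faithful reconstruction of the argument.
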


\begin{table}
\begin{tabular}{p{0.95\textwidth}}
\hline
\begin{compactitem}
\item[R1.\ ] $[\kCon{x_a}{y},\kCon{x_b}{y}]=1$;
\item[R2.\ ] $[\kMulcomm{x_a^\alpha}{y^\epsilon}{x_c^\gamma},\kMulcomm{x_b^\beta}{y}{x_d^\delta}]=1$,
possibly with $x_a^\alpha=x_b^{-\beta}$ or $x_c=x_d$ (or both), as long as $x_a^\alpha\neq x_b^\beta$, $x_a\neq x_d$ and $x_b\neq x_c$;
\item[R3.\ ] $[\kCon{x_a}{y},\kMulcomm{x_b^\beta}{y^\epsilon}{x_c^\gamma}]=1$;
\item[R4.\ ] $\kCon{y}{x_b}^{-\beta}\kMulcomm{x_a^\alpha}{y^\epsilon}{x_b^\beta}\kCon{y}{x_b}^{\beta}=\kMulcomm{x_a^\alpha}{x_b^{-\beta}}{y^\epsilon}$;
\item[R5.\ ] $\kCon{x_b}{y}^{-\epsilon}\kMulcomm{x_a^\alpha}{y^\epsilon}{x_b^\beta}\kCon{x_b}{y}^{\epsilon}=\kMulcomm{x_a^\alpha}{x_b^\beta}{y^{-\epsilon}}$;
\item[R6.\ ] $\kCon{x_a}{y}^\epsilon\kMulcomm{x_a^\alpha}{y^\epsilon}{x_b^\beta}\kCon{x_a}{y}^{-\epsilon}=\kMulcomm{x_a^\alpha}{x_b^\beta}{y^{-\epsilon}}$;
\item[R7.\ ] $\kMulcomm{x_a^\alpha}{y^\epsilon}{x_b^\beta}\kMulcomm{x_a^{-\alpha}}{y^\epsilon}{x_b^\beta}=\kCon{y}{x_b}^{\beta}\kCon{x_a}{y}^{-\epsilon}\kCon{y}{x_b}^{-\beta}\kCon{x_a}{y}^{\epsilon}$;
\item[R8.\ ] $\kMulcomm{x_b^\beta}{y^{-\epsilon}}{x_c^\gamma}\kMulcomm{x_a^\alpha}{y^\epsilon}{x_b^\beta}\kMulcomm{x_b^\beta}{x_c^\gamma}{y^{-\epsilon}}
=\kMulcomm{x_a^\alpha}{x_c^\gamma}{y^{-\epsilon}}\kMulcomm{x_a^\alpha}{y^\epsilon}{x_b^\beta}\kMulcomm{x_a^\alpha}{x_c^\gamma}{y^\epsilon}$;
\item[R9.\ ] 
$\kCon{x_b}{y}^{-\epsilon}\kCon{y}{x_c}^\gamma\kMulcomm{x_a^\alpha}{y^\epsilon}{x_b^\beta}\kCon{y}{x_c}^{-\gamma}\kCon{x_b}{y}^\epsilon$

$\quad\quad=\kMulcomm{x_a^\alpha}{x_b^\beta}{y^{-\epsilon}}\kCon{y}{x_c}^\gamma\kMulcomm{x_a^\alpha}{y^\epsilon}{x_b^\beta}\kCon{y}{x_c}^{-\gamma}\kMulcomm{x_a^\alpha}{y^\epsilon}{x_c^\gamma}\kMulcomm{x_a^\alpha}{x_b^\beta}{y^\epsilon}\kMulcomm{x_a^\alpha}{x_c^\gamma}{y^\epsilon}$;
\item[R10.\ ]
$\kCon{x_c}{y}^{-\epsilon}\kCon{y}{x_c}^\gamma\kMulcomm{x_a^{\alpha}}{y^{\epsilon}}{x_b^\beta}\kCon{y}{x_c}^{-\gamma}\kCon{x_c}{y}^{\epsilon}$

$\quad\quad=\kMulcomm{x_a^{\alpha}}{y^{-\epsilon}}{x_b^\beta}\kMulcomm{x_a^{\alpha}}{x_c^\gamma}{y^{-\epsilon}}\kCon{y}{x_c}^\gamma\kMulcomm{x_a^{\alpha}}{x_b^\beta}{y^{-\epsilon}}\kCon{y}{x_c}^{-\gamma}\kMulcomm{x_a^{\alpha}}{y^{\epsilon}}{x_b^\beta}\kMulcomm{x_a^{\alpha}}{y^{-\epsilon}}{x_c^\gamma}$.
\end{compactitem}\\
\hline
\end{tabular}
\caption{The relations $R_K^0$ for the $L$-presentation of $\BKerIA{n}{1}$.  The letters $a,b,c,d$ are elements of $\{1,\ldots,n\}$
(assumed distinct unless otherwise stated) and $\alpha,\beta,\gamma,\delta,\epsilon \in \{1,-1\}$.}
\label{table:basicrelsbkerian}
\end{table}

\subsection{Constructing the extension}
\label{section:lpresentationextension}

Let $\Gamma_n = \LPres{S_K}{R_K^0}{\phi(S_Q^{\pm1})}$ 
be the group with the presentation described in Theorem \ref{theorem:preciselpres}.  There
is thus a surjection $\Psi\co \Gamma_n \rightarrow \BKerIA{n}{1}$ which Theorem \ref{theorem:preciselpres} claims
is an isomorphism.  Lemma \ref{lemma:relatingkernels} says that there exists a short exact sequence
\begin{equation}
\label{eqn:knownextension}
1 \longrightarrow \BKerIA{n}{1} \longrightarrow \AutFB{n}{1} \stackrel{\rho}{\longrightarrow} \zasdr \longrightarrow 1
\end{equation}
together with homomorphisms $\iota_1\co \Aut(F_n) \rightarrow \AutFB{n}{1}$ and
$\iota_2\co \Z^n \rightarrow \AutFB{n}{1}$ such that $\rho \circ \iota_1 = \text{id}$
and $\rho \circ \iota_2 = \text{id}$.  The purpose of this section is to construct the data needed to
apply Theorem \ref{theorem:twistedextend} and deduce that there exists a similar extension involving
$\Gamma_n$ instead of $\BKerIA{n}{1}$.

For $f \in \Aut(F_n)$ and $z \in \Z^n$, define homomorphisms $\overline{\alpha}_f\co \BKerIA{n}{1} \rightarrow \BKerIA{n}{1}$
and $\overline{\beta}_z\co \BKerIA{n}{1} \rightarrow \BKerIA{n}{1}$ via the formulas
\[\overline{\alpha}_f(x) = \iota_1(f) x \iota_1(f)^{-1} \quad \text{and} \quad \overline{\beta}_z(x) = \iota_2(z) x \iota_2(z)^{-1} \quad \quad (x \in \BKerIA{n}{1}).\]
These define actions of $\Aut(F_n)$ and $\Z^n$ on $\BKerIA{n}{1}$.  Using the construction described in
Example \ref{example:groupextend}, we obtain from \eqref{eqn:knownextension} a twisted bilinear map 
$\overline{\lambda}\co \Aut(F_n) \times \Z^n \rightarrow \BKerIA{n}{1}$.  We must lift all of this data
to $\Gamma_n$.  This is accomplished in the following three lemmas.  For a set
$S$, let $S^{\ast}$ denote the free monoid on $S$, so $S^{\ast}$ consists of
words in $S$.

\begin{lemma}
\label{lemma:action1}
There exists an action of $\Aut(F_n)$ on $\Gamma_n$ with the following property.  For $f \in \Aut(F_n)$, denote
by $\alpha_f\co\Gamma_n \rightarrow \Gamma_n$ the associated automorphism.  Then
\[\Psi(\alpha_f(x)) = \overline{\alpha}_f(\Psi(x)) \quad \quad (f \in \Aut(F_n), x \in \Gamma_n).\]
\end{lemma}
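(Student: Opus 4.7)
The plan is to define the action on Nielsen's generating set $S_A$ for $\Aut(F_n)$ (transvections, swaps, and inversions) and then check that it respects Nielsen's relations from Table \ref{table:autfnpresentation}. Crucially, $S_A \subset S_Q$, so for each $s \in S_A^{\pm 1}$ the endomorphism $\phi(s) \in \End(F(S_K))$ is already at our disposal from Table \ref{table:definephi}. The proof will proceed in three stages: first, descend $\phi(s)$ to an endomorphism $\alpha_s$ of $\Gamma_n$; second, check that the $\alpha_s$ give automorphisms satisfying Nielsen's relations; and third, verify the compatibility with $\Psi$.

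The first stage is essentially automatic from the structure of our L-presentation. The full set $R_K$ of relations for $\Gamma_n$ equals $\{f(r) \mid f \in M,\ r \in R_K^0\}$, where $M$ is the submonoid of $\End(F(S_K))$ generated by $\phi(S_Q^{\pm 1})$. Since $\phi(s) \in \phi(S_Q^{\pm 1})$ for $s \in S_A^{\pm 1}$, the composition $\phi(s) \circ f$ lies in $M$ for any $f \in M$, whence $\phi(s)(R_K) \subseteq R_K$. Thus $\phi(s)$ descends to an endomorphism $\alpha_s$ of $\Gamma_n$.

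For the second stage, I need to verify two families of identities in $\Gamma_n$: (a) for each $s \in S_A$, the compositions $\alpha_s \alpha_{s^{-1}}$ and $\alpha_{s^{-1}} \alpha_s$ equal the identity, so that $\alpha_s$ is an automorphism; and (b) for each relation $w(s_1, \ldots, s_m) = 1$ of Table \ref{table:autfnpresentation}, the corresponding composition $\alpha_{s_1} \cdots \alpha_{s_m}$ is the identity automorphism of $\Gamma_n$. Since every endomorphism of $\Gamma_n$ is determined by its values on the finite generating set $S_K$, both (a) and (b) reduce to finite collections of word identities of the form $\phi(s_{i_1}) \cdots \phi(s_{i_m})(t) = t$ in $\Gamma_n$ (equivalently, the displayed equality holds modulo $R_K$) for $t \in S_K$. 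These identities are numerous and cannot reasonably be checked by hand; this is the main obstacle, and it will be handled by the computer calculations described in \S \ref{section:computer}. Once (a) and (b) are in place, the universal property of Nielsen's presentation in Theorem \ref{theorem:autfnpresentation} promotes $s \mapsto \alpha_s$ to a homomorphism $\Aut(F_n) \to \Aut(\Gamma_n)$, which is the required action.

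Finally, the compatibility $\Psi(\alpha_f(x)) = \overline{\alpha}_f(\Psi(x))$ follows by reduction to generators. Both sides are multiplicative in $x$ for fixed $f$ and homomorphic in $f$ for fixed $x$, so it suffices to check the case $f = s \in S_A$ and $x = t \in S_K$. In that case the identity becomes exactly Lemma \ref{lemma:tphiproperty}, since $\iota_1$ embeds each $s \in S_A$ into $\AutFB{n}{1}$ as the same automorphism (it fixes $y$), so that $\overline{\alpha}_s$ is conjugation by $s \in S_Q \subset \AutFB{n}{1}$.
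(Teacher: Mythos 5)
Your proposal is correct and follows essentially the same route as the paper: you descend $\phi(s)$ to $\End(\Gamma_n)$ using the defining property of the L-presentation, verify invertibility and Nielsen's relations via the computer calculations of \S\ref{section:computer} (corresponding to Lemmas \ref{lemma:computeraction11} and \ref{lemma:computeraction12}), invoke the universal property of the presentation $\Pres{S_A}{R_A}$, and derive the intertwining with $\Psi$ from Lemma \ref{lemma:tphiproperty}. The only cosmetic difference is that the paper first builds a monoid homomorphism $\zeta\co(S_A^{\pm1})^{\ast}\to\End(\Gamma_n)$ before descending to $F(S_A)$ and then to $\Aut(F_n)$, while you describe the same descent more directly; and your phrase ``homomorphic in $f$ for fixed $x$'' is slightly loose (the correct reduction is: once the identity holds for all $x$ and a fixed $f$, it propagates under composition in $f$ because $\alpha_{f_1f_2}=\alpha_{f_1}\circ\alpha_{f_2}$ and $\overline{\alpha}_{f_1f_2}=\overline{\alpha}_{f_1}\circ\overline{\alpha}_{f_2}$), but the intended argument is clear and valid.
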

\begin{proof}
Let $\Aut(F_n) = \Pres{S_A}{R_A}$ be the presentation given by Theorem 
\ref{theorem:autfnpresentation}.  We have $S_A \subset S_Q$, so the
map $\phi\co S_Q^{\pm 1} \rightarrow \End(F(S_K))$ used in the construction
of the L-presentation for $\Gamma_n$ restricts to a set map
$S_A^{\pm 1} \rightarrow \End(F(S_K))$.  By the definition of an L-presentation,
the image of this set map preserves the relations between elements of $S_K$
that make up $\Gamma_n$, so we get a set map $S_A^{\pm 1} \rightarrow \End(\Gamma_n)$.
By the universal property of the free monoid, this induces a monoid homomorphism
$\zeta\co(S_A^{\pm 1})^{\ast} \rightarrow \End(\Gamma_n)$.  Computer calculations
described in Lemma \ref{lemma:computeraction11} below show that
$\zeta(s)(\zeta(s^{-1})(t)) = t$ for all $s \in S_A^{\pm 1}$ and $t \in S_K$, which
implies that the image of $\zeta$ is contained in $\Aut(\Gamma_n)$ and that 
$\zeta$ descends to a group homomorphism $\eta\co F(S_A) \rightarrow \Aut(\Gamma_n)$.
Further computer calculations described in Lemma \ref{lemma:computeraction12} below
show that $\eta(r)(t) = t$ for all $r \in R_A$ and all $t \in S_K$.  This
implies that $\eta$ descends to a group homomorphism
$\Aut(F_n) \rightarrow \Aut(\Gamma_n)$.  This is the desired action; the claimed
naturality property follows from Lemma \ref{lemma:tphiproperty}.
\end{proof}

\begin{lemma}
\label{lemma:action2}
There exists an action of $\Z^n$ on $\Gamma_n$ with the following property.  For $z \in \Z^n$, denote
by $\beta_z\co \Gamma_n \rightarrow \Gamma_n$ the associated automorphism.  Then
\[\Psi(\beta_z(x)) = \overline{\beta}_z(\Psi(x)) \quad \quad (z \in \Z^n, x \in \Gamma_n).\]
\end{lemma}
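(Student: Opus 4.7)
The plan is to mirror the proof of Lemma \ref{lemma:action1}, replacing Nielsen's presentation of $\Aut(F_n)$ with the standard commutator presentation of $\Z^n$. Using the splitting $\iota_2$, identify $\Z^n$ with the group generated by $t_a \coloneq \Mul{x_a}{y}$ for $1\leq a\leq n$, subject to the relations $R_Z = \{[t_a,t_b] \mid a\neq b\}$. Since $\{t_1,\ldots,t_n\}\subset S_Q$, the map $\phi$ of \S \ref{section:lpresentationstatement} restricts to a set map $\{t_1^{\pm 1},\ldots,t_n^{\pm 1}\}\to\End(F(S_K))$. By construction of the $L$-presentation, this image preserves the normal closure of $\phi(S_Q^{\pm 1})^\ast(R_K^0)$, so it descends to a set map into $\End(\Gamma_n)$ and extends along the universal property of the free monoid to a monoid homomorphism $\zeta\co(\{t_1^{\pm 1},\ldots,t_n^{\pm 1}\})^\ast \to \End(\Gamma_n)$.

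Next I would check that for each $a$, the endomorphisms $\zeta(t_a)$ and $\zeta(t_a^{-1})$ are two-sided inverses in $\End(\Gamma_n)$. As in Lemma \ref{lemma:action1}, it suffices to verify that their compositions fix every element of the generating set $S_K$, which is a finite computer check of exactly the kind performed in Lemma \ref{lemma:computeraction11}. This promotes $\zeta$ to a group homomorphism $\eta\co F(t_1,\ldots,t_n)\to\Aut(\Gamma_n)$.

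Then I would verify that for each pair $a\neq b$, the automorphism $\eta([t_a,t_b])$ acts as the identity on every generator in $S_K$ --- again a finite computer check of the type used in Lemma \ref{lemma:computeraction12}. This shows $\eta$ factors through the abelian quotient $\Z^n = F(t_1,\ldots,t_n)/\langle\!\langle R_Z\rangle\!\rangle$, yielding the desired action $\beta\co\Z^n\to\Aut(\Gamma_n)$. The naturality property $\Psi\circ\beta_z = \overline{\beta}_z\circ\Psi$ holds on each $t\in S_K$ by Lemma \ref{lemma:tphiproperty} applied to the generators $t_a\in S_Q$, and then extends multiplicatively to all of $\Gamma_n$ and to all $z\in\Z^n$ since both sides are group homomorphisms in $z$ (once the action exists).

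The main obstacle is the commutator verification. Although $[\Mul{x_a}{y},\Mul{x_b}{y}]=1$ already holds in $\AutFB{n}{1}$ (so the corresponding identity is automatic for $\overline{\beta}$ on $\BKerIA{n}{1}$), the formulas defining $\phi(\Mul{x_a}{y})$ in Table \ref{table:definephi} perform genuinely nontrivial rewriting on generators such as $\kCon{y}{x_b}$ and $\kMulcomm{x_a}{y^\zeta}{x_b^\beta}$, so their commutation must be established at the level of $\Aut(\Gamma_n)$ rather than inferred from $\Aut(\BKerIA{n}{1})$. Since this amounts only to applying two explicit word-rewriting procedures to each element of $S_K$ and comparing the results inside $\Gamma_n$ (using $R_K^0$ to reduce), the verification is exactly the sort of finite but lengthy check that is delegated to the computer calculations of \S \ref{section:computer}.
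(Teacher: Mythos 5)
Your proposal matches the paper's proof essentially verbatim: both restrict $\phi$ to $S_Z^{\pm1}$, promote the induced monoid map to a group homomorphism by checking on $S_K$ that $\phi(s)\circ\phi(s^{-1})=\mathrm{id}$, then check the commutator relations on $S_K$ to descend to $\Z^n$, with naturality coming from Lemma~\ref{lemma:tphiproperty}. The paper delegates the two finite checks to Lemmas~\ref{lemma:computeraction21} and~\ref{lemma:computeraction22}, exactly as you anticipate, and your closing remark correctly identifies why the commutator check must be done in $\Gamma_n$ rather than inferred from $\AutFB{n}{1}$.
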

\begin{proof}
Set $S_Z = \{\Mul{x_1}{y},\ldots,\Mul{x_n}{y}\}$ and 
$R_Z = \Set{$[\Mul{x_i}{y}, \Mul{x_j}{y}]$}{$1 \leq i < j \leq n$}$, so
$S_Z \subset S_Q$ and $\Z^n = \Pres{S_Z}{R_Z}$.  Just like in the proof
of Lemma \ref{lemma:action1}, the 
map $\phi\co S_Q^{\pm 1} \rightarrow \End(F(S_K))$ restricts to a set map
$S_Z^{\pm 1} \rightarrow \End(F(S_K))$ which induces a monoid homomorphism
$\zeta\co(S_Z^{\pm 1})^{\ast} \rightarrow \End(\Gamma_n)$.  Computer calculations
described in Lemma \ref{lemma:computeraction21} below show that
$\zeta(s)(\zeta(s^{-1})(t)) = t$ for all $s \in S_Z^{\pm 1}$ and $t \in S_K$, 
so $\zeta$ induces a group homomorphism
$\eta\co F(S_Z) \rightarrow \Aut(\Gamma_n)$.
Further computer calculations described in Lemma \ref{lemma:computeraction22} below
show that $\eta(r)(t) = t$ for all $r \in R_Z$ and all $t \in S_K$.  This
implies that $\eta$ descends to a group homomorphism
$\Z^n \rightarrow \Aut(\Gamma_n)$.  This is the desired action; the claimed
naturality property follows from Lemma \ref{lemma:tphiproperty}.
\end{proof}

\begin{lemma}
\label{lemma:twistedbilinear}
With respect to the action of $\Aut(F_n)$ on $\Z^n$ introduced in \S \ref{section:relatingkernels} and
the actions of $\Z^n$ and $\Aut(F_n)$ on $\Gamma_n$ given by Lemmas \ref{lemma:action1} and \ref{lemma:action2}, there
exists a twisted bilinear map $\lambda\co \Aut(F_n) \times \Z^n \rightarrow \Gamma_n$ such that
\[\Psi(\lambda(f,z)) = \overline{\lambda}(f,z) \quad \quad (f \in \Aut(F_n), z \in \Z^n).\]
\end{lemma}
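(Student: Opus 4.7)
The plan is to exploit the fact that $\Gamma_n$ was designed so that the natural map $\Psi\co \Gamma_n \to \BKerIA{n}{1}$ reflects the structure encoded by $\phi$. I would construct $\lambda$ in stages, first on a finite set of pairs, then extending using TB1 and TB2, and then verifying the defining identities by reducing each to a finite computer check analogous to those used in Lemmas \ref{lemma:action1} and \ref{lemma:action2}.

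First, I would fix an explicit lift of $\overline{\lambda}$ on a finite set of pairs. The group $\Aut(F_n)$ has the finite generating set $S_A$ given by Theorem \ref{theorem:autfnpresentation}, and $\Z^n$ has the standard basis $e_1,\ldots,e_n$. For each $s \in S_A^{\pm 1}$ and each $e_i$, write $\overline{\lambda}(s,e_i) \in \BKerIA{n}{1}$ as an explicit word in $S_K$ using the definition of $\overline{\lambda}$ from Example~\ref{example:groupextend} together with the formulas for $\iota_1,\iota_2$ from Lemma \ref{lemma:relatingkernels}, and declare $\lambda(s,e_i) \in \Gamma_n$ to be the element represented by that word. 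Because $\Psi$ is a homomorphism sending each generator of $\Gamma_n$ to the matching element of $\BKerIA{n}{1}$, the compatibility $\Psi(\lambda(s,e_i)) = \overline{\lambda}(s,e_i)$ holds by construction.

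Next I would extend $\lambda$ to $S_A^{\pm 1} \times \Z^n$ using TB1. For a fixed $s \in S_A^{\pm 1}$, TB1 states that $\lambda(s,-)$ is a crossed homomorphism from $\Z^n$ into $\Gamma_n$ with respect to the action $z \mapsto \beta_{\AB{s}{z}}$ of Lemma \ref{lemma:action2}. Since $\Z^n$ is presented by $S_Z = \{e_1,\ldots,e_n\}$ with commutator relators, to make this well-defined I must verify that the values $\lambda(s,e_i)$ and $\lambda(s,e_j)$ satisfy the crossed-commutation identity in $\Gamma_n$. This reduces to a finite list of identities in $\Gamma_n$, each provable by a computer calculation using the relations $R_K^0$. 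Then I extend to all of $\Aut(F_n) \times \Z^n$ using TB2: for fixed $z \in \Z^n$, declare $\lambda(-,z)$ to be generated from the values $\lambda(s,z)$ with $s \in S_A^{\pm 1}$ by iterating TB2. Well-definedness now requires checking that the TB2-formula applied to each defining relation $r \in R_A$ of $\Aut(F_n)$ yields the trivial element of $\Gamma_n$, which is again a finite list of identities and another computer check.

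Finally I would verify TB3 on a generating set. Since both sides of TB3 are, for fixed $(a,b)$, homomorphisms in the variable $k$ (after rearrangement), it suffices to check the identity for $a \in S_A^{\pm 1}$, $b \in \{e_1,\ldots,e_n\}$, and $k \in S_K$; this is once more a finite check in $\Gamma_n$ reducible to the relations in Table \ref{table:basicrelsbkerian} via the defining formulas for $\phi$ in Table \ref{table:definephi}. The compatibility $\Psi \circ \lambda = \overline{\lambda}$ propagates through all of these extensions automatically: it is a consequence of Lemma \ref{lemma:tphiproperty} together with the fact that $\overline{\lambda}$ itself satisfies TB1, TB2, TB3 in $\BKerIA{n}{1}$, so any identity verified by the computer in $\Gamma_n$ projects under $\Psi$ to the corresponding known identity in $\BKerIA{n}{1}$. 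The main obstacle is the well-definedness step: a priori the crossed-homomorphism extension by TB1 and TB2 could fail because $\Gamma_n$ is defined by the incomplete relation set $R_K^0$ (the full set of relations is the orbit under $\phi(S_Q^{\pm 1})$), so I must verify that the specific cocycle-type identities arising from the $R_A$-relators and the commutators $[e_i,e_j]$ already follow from $R_K^0$ itself. This is exactly the kind of constrained identity that motivates the very choice of $R_K^0$ in Table \ref{table:basicrelsbkerian}, and is handled by the computer calculations deferred to \S \ref{section:computer}.
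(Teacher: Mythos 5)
Your overall strategy is essentially the one the paper uses: fix explicit values of $\lambda$ on $S_A^{\pm 1}\times S_Z^{\pm 1}$ so that the compatibility with $\overline{\lambda}$ holds by construction, then iterate the TB1 and TB2 expansion rules to extend, reducing well-definedness and the axioms to finite computer checks against $R_K^0$. But there is a real gap in how you order the verifications, and the gap is precisely where the paper's proof does its most careful work.

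You propose to extend $\lambda(-,z)$ over $\Aut(F_n)$ for \emph{each fixed} $z\in\Z^n$ by iterating TB2, and you claim that well-definedness reduces to ``checking that the TB2-formula applied to each defining relation $r\in R_A$ yields the trivial element of $\Gamma_n$, which is again a finite list of identities.'' This is not finite: $z$ ranges over all of $\Z^n$, so a priori you must verify $\tilde\lambda_2(r,z)=1$ for infinitely many pairs $(r,z)$. The computer can only check the cases $z\in S_Z^{\pm1}$; to pass from those to general $z$ you need a TB1-type expansion identity for $\tilde\lambda_2$ (the paper's Claim~\ref{claim:six}), and proving that identity in turn requires having already established a TB3-type identity for $\tilde\lambda_2$ (the paper's Claim~\ref{claim:five}), which itself rests on the TB3 property for $\lambda_1$ (the paper's Claim~\ref{claim:three}). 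Your proposal defers all of TB3 to a final step ``after'' the construction is complete, so the logical order does not close: you cannot finish the well-definedness argument for the TB2-extension without the TB3 machinery already in hand. The same issue, in milder form, affects your final step: you assert that checking TB3 for $(a,b,k)\in S_A^{\pm1}\times\{e_1,\ldots,e_n\}\times S_K$ suffices, but passing from generators to arbitrary $a\in\Aut(F_n)$ and $b\in\Z^n$ requires the inductive arguments of Claims~\ref{claim:three} and~\ref{claim:five}, not merely the observation that both sides are homomorphisms in $k$. The correct order, which the paper follows, is: build $\tilde\lambda_1$, show it descends, prove TB3 for $\lambda_1$; then build $\tilde\lambda_2$, prove TB2, then TB3, then TB1 for $\tilde\lambda_2$, and only then show $\tilde\lambda_2$ descends.
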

\begin{proof}
Let $\alpha_f$ and $\beta_z$ be as in Lemmas \ref{lemma:action1} and \ref{lemma:action2}, respectively.
Let $\Aut(F_n) = \Pres{S_A}{R_A}$ be the presentation given by Theorem
\ref{theorem:autfnpresentation}.  Also, let $S_Z = \{\Mul{x_1}{y},\ldots,\Mul{x_n}{y}\}$ and
$R_Z = \Set{$[\Mul{x_i}{y}, \Mul{x_j}{y}]$}{$1 \leq i < j \leq n$}$, so
$\Z^n = \Pres{S_Z}{R_Z}$.  We claim that it is enough to construct a twisted bilinear map
$\lambda\co \Aut(F_n) \times \Z^n \rightarrow \Gamma_n$ such that
\begin{equation}
\label{eqn:desired}
\Psi(\lambda(f,z)) = \overline{\lambda}(f,z) \quad \quad (f \in S_A^{\pm 1}, z \in S_Z^{\pm 1}).
\end{equation}
Indeed, the axioms of a twisted bilinear map show that $\lambda$ is determined by its values on
generators: property TB2 says that
$\lambda(a_1 a_2,b) = \AK{a_1}{\lambda(a_2,b)} \cdot \lambda(a_1,\AB{a_2}{b})$ for all
$a_1,a_2 \in \Aut(F_n)$ and $b \in \Z^n$, so the values of $\lambda$ are determined
by the values of $\lambda(f,z)$ for $f \in S_A^{\pm 1}$ and $z \in \Z^n$, and then
property TB1 says that $\lambda(a,b_1 b_2) = \lambda(a,b_1) \cdot \BK{\AB{a}{b_1}}{\lambda(a,b_2)}$
for all $a \in \Aut(F_n)$ and $b_1,b_2 \in \Z^n$, so the values of $\lambda$ are
determined by the values of $\lambda(f,z)$ for $f \in S_A^{\pm 1}$ and $z \in S_Z^{\pm 1}$.  
An analogous fact holds for $\overline{\lambda}$, whence the claim.

\begin{table}
\begin{center}
\begin{tabular}{c|c|c}
$f\in S_A^{\pm1}$ & $z\in S_Z^{\pm1}$ & $\lambda(f,z)\in F(S_K)$ \\
\hline
$I_a^{\pm1}$ & $\Mul{x_a}{y}^\epsilon$ & $\kCon{x_a}{y}^\epsilon$ \\
$\Mul{x_a}{x_b}^\beta$ & $\Mul{x_a}{y}^\epsilon$ & $\kMulcomm{x_a}{y^{-\epsilon}}{x_b^{-\beta}}$ \\
$\Mul{x_a}{x_b}$ & $\Mul{x_b}{y}^\epsilon$ & $\kMulcomm{x_a}{y^{\epsilon}}{x_b^{-1}}$ \\
$\Mul{x_a^{-1}}{x_b}$ & $\Mul{x_b}{y}^\epsilon$ & $(\kMulcomm{x_a^{-1}}{y}{x_b^{-1}}\kCon{x_a}{y}^{-1})^\epsilon$ \\
$\Mul{x_a^{-1}}{x_b}^{-1}$ & $\Mul{x_b}{y}^\epsilon$ & $\kCon{x_a}{y}^\epsilon$ \\
\end{tabular}
\end{center}
\caption{The effect of $\lambda(\cdot,\cdot)$ on generators.  For $f \in S_A^{\pm 1}$ and $z \in S_Z^{\pm 1}$ such
that there is no entry in the above table, we have $\lambda(f,z)=1$.}
\label{table:lambdadef}
\end{table}

We will construct $\lambda$ such that $\lambda(f,z)$ is as in Table \ref{table:lambdadef} for $f \in S_A^{\pm 1}$
and $z \in S_Z^{\pm 1}$.  It is easy to check that these values satisfy \eqref{eqn:desired}.  We will do this
in four steps.  For a set $S$, let $S^{\ast}$ be the free monoid on $S$, so $S^{\ast}$ consists of
words in $S$.
\begin{compactitem}
\item First, for $f \in S_A^{\pm 1}$ we will use the ``expansion rule'' TB1 to
construct a map $\tlambda_1(f,\cdot)$ from $(S_Z^{\pm 1})^{\ast}$ to $\Gamma_n$ 
with $\tlambda_1(f,z)$ equal to the value of $\lambda(f,z)$ from Table \ref{table:lambdadef} 
for $z \in S_Z^{\pm 1}$.
\item Next, we will show that $\tlambda_1(f,\cdot)$ descends to a map $\lambda_1(f,\cdot)$ from $\Z^n$ to $\Gamma_n$.
\item Next, for $z \in \Z^n$ we will use the ``expansion rule'' TB2 to
construct a map $\tlambda_2(\cdot,z)$ from $(S_A^{\pm 1})^{\ast}$
to $\Gamma_n$ with $\tlambda_2(f,z) = \tlambda_1(f,z)$ for $f \in S_A^{\pm 1}$.
\item Finally, we will show that $\tlambda_2(f,\cdot)$ descends to a map $\lambda_2(f,\cdot)$ from $\Aut(F_n)$ to
$\Gamma_n$.
\end{compactitem}
The desired twisted bilinear map will then be defined by $\lambda(f,z) = \lambda_2(f,z)$.  It will follow
from the various intermediate steps in our construction that $\lambda(\cdot,\cdot)$ is a twisted bilinear map.

As notation, for $w \in (S_A^{\pm 1})^{\ast}$, let $\hw$ denote the image of $w$ in $\Aut(F_n)$.  Similarly,
for $w \in (S_Z^{\pm 1})^{\ast}$, let $\hw$ denote the image of $w$ in $\Z^n$.

We now construct $\tlambda_1$.  For $f \in S_A^{\pm 1}$ and $w \in (S_Z^{\pm 1})^{\ast}$, we define
$\tlambda_1(f,w) \in \Gamma_n$ by induction on the length of $w$.  If $w = 1$ (i.e.\ $w$ has length $0$), then
we define $\tlambda_1(f,w) = 1$.  If $w \in S_Z^{\pm 1}$ (i.e.\ $w$ has length $1$), then we
define $\tlambda_1(f,w)$ to be the value of $\lambda(f,w)$ from Table \ref{table:lambdadef}.  Finally, 
if $w$ has length at least $2$ and $\tlambda_1(f,\cdot)$ has been defined for all shorter words, then
write $w = s w'$ with $s \in S_Z^{\pm 1}$ and define
\[\tlambda_1(f,w) = \tlambda_1(f,s) \cdot \BK{\AB{\hf}{\hs}}{\tlambda_1(f,w')}.\]
This formula should remind the reader of property TB1 from the definition of a twisted bilinear map, as should
the following claim.

\begin{claimsb}
\label{claim:one}
$\tlambda_1(f,w w') = \tlambda_1(f,w) \cdot \BK{\AB{\hf}{\hw}}{\tlambda_1(f,w')}$ 
for $f \in S_A^{\pm 1}$ and $w,w' \in (S_Z^{\pm 1})^{\ast}$.
\end{claimsb}
\begin{proof}[Proof of claim]
The proof is by induction on the length of $w$.  For $w$ of length $0$, this is trivial, and
for $w$ of length $1$, it holds by definition.  Now assume that $w$ has length at least $2$
and that the desired formula holds whenever $w$ has smaller length.  Write $w = w_1 w_2$, where
$w_1$ and $w_2$ are shorter words than $w$.  Applying our inductive hypothesis twice, we see that
\begin{align*}
\tlambda_1(f,w w') &= \tlambda_1(f,w_1 w_2 w') = \tlambda_1(f,w_1) \cdot \BK{\AB{\hf}{\hw_1}}{\tlambda_1(f,w_2 w')} \\
&= \tlambda_1(f,w_1) \cdot \BK{\AB{\hf}{\hw_1}}{\tlambda_1(f,w_2) \cdot \BK{\AB{\hf}{\hw_2}}{\tlambda_1(f,w')}} \\
&= \tlambda_1(f,w_1) \cdot \BK{\AB{\hf}{\hw_1}}{\tlambda_1(f,w_2)} \cdot \BK{\AB{\hf}{\widehat{w_1 w_2}}}{\tlambda_1(f,w')}.
\end{align*}
Applying our inductive hypothesis to the first two terms, we see that this equals
\[\tlambda_1(f,w_1 w_2) \cdot \BK{\AB{\hf}{\widehat{w_1 w_2}}}{\tlambda_1(f,w')} = \tlambda_1(f,w) \cdot \BK{\AB{\hf}{\hw}}{\tlambda_1(f,w')}.\qedhere\]
\end{proof}

\begin{claimsb}
\label{claim:two}
For $w,w' \in (S_Z^{\pm 1})^{\ast}$ with $\hw = \hw' \in \Z^n$, we have $\tlambda_1(f,w) = \tlambda_1(f,w')$ for 
$f \in S_A^{\pm 1}$.
\end{claimsb}
\begin{proof}[Proof of claim]
Recall that $\Z^n = \GroupPres{S_Z}{R_Z}$.
Define $R_Z' = R_Z \cup \Set{$s s^{-1}$}{$s \in S_Z^{\pm 1}$} \subset (S_Z^{\pm 1})^{\ast}$.  Since 
any two elements of $(S_Z^{\pm 1})^{\ast}$ that map to the same element of $\Z^n$ must differ by
a sequence of insertions and deletions of elements of $R_Z'$, we can assume without loss of generality
that $w = u v$ and $w' = u r v$ for some $u,v \in (S_Z^{\pm 1})^{\ast}$ and $r \in R_Z'$.  A computer
calculation described in Lemma \ref{lemma:computerrelationsz} below shows that $\tlambda_1(f,r) = 1$.  
We now apply Claim \ref{claim:one} several times to deduce that
\begin{align*}
\tlambda_1(f,w') &= \tlambda_1(f,u r v) = \tlambda_1(f,u) \cdot \BK{\AB{\hf}{\hu}}{\tlambda_1(f,r)} \cdot \BK{\AB{\hf}{\widehat{ur}}}{\tlambda_1(f,v)}\\
&= \tlambda_1(f,u) \cdot \BK{\AB{\hf}{\widehat{u}}}{\tlambda_1(f,v)}
= \tlambda_1(f,uv) = \tlambda_1(f,w). \qedhere
\end{align*}
\end{proof}

For $f \in S_A^{\pm 1}$, Claim \ref{claim:two} implies that the map $\tlambda_1(f,\cdot)$ from $(S_Z^{\pm 1})^{\ast}$
to $\Gamma_n$ descends to a map $\lambda_1(f,\cdot)$ from $\Z^n$ to $\Gamma_n$.  Claim \ref{claim:one} implies that
$\lambda_1(f,\cdot)$ satisfies a version of condition TB1 from the definition of a twisted bilinear
map, namely that $\lambda_1(f,z_1 z_2) = \lambda_1(f,z_1) \cdot \BK{\AB{\hf}{z_1}}{\lambda_1(f,z_2)}$ for
all $z_1,z_2 \in \Z^n$.  Our next claim is a version of condition TB3.  We remark that the condition
$f \in S_A$ in it is not a typo; we will extend it to $f \in S_A^{\pm 1}$ later.

\begin{claimsb}
\label{claim:three}
$\lambda_1(f,z) \cdot \BK{\AB{\hf}{z}}{\AK{\hf}{k}} \cdot \lambda_1(f,z)^{-1} = \AK{\hf}{\BK{z}{k}}$
for $f \in S_A$, $z \in \Z^n$, and $k \in \Gamma_n$.
\end{claimsb}
\begin{proof}[Proof of claim]
Let $w \in (S_Z^{\pm 1})^{\ast}$ satisfy $\hw = z$.  The proof is by induction on the length of $w$.  
For $w$ of length $0$, the claim is trivial.  For $w$ of length $1$, there are two cases.  
For $w \in S_Z$, the claim follows from a computer calculation
described below in Lemma \ref{lemma:computertb3}.  For $w = v^{-1}$ with $v \in S_Z$, Claim \ref{claim:one}
implies that
\[1 = \tlambda_1(f,v^{-1}v) = \tlambda_1(f,v^{-1}) \cdot \BK{\AB{\hf}{\hv^{-1}}}{\tlambda_1(f,v)},\]
so $\tlambda_1(f,v^{-1}) = \BK{\AB{\hf}{\hv^{-1}}}{\tlambda_1(f,v)^{-1}}$.  Our goal is to show that
\[\tlambda_1(f,v^{-1}) \cdot \BK{\AB{\hf}{\hv^{-1}}}{\AK{\hf}{k}} \cdot \tlambda_1(f,v^{-1})^{-1} = \AK{\hf}{\BK{\hv^{-1}}{k}}.\]
Plugging in our formula for $\tlambda_1(f,v^{-1})$, we see that this is equivalent to showing that
\[\BK{\AB{\hf}{\hv^{-1}}}{\tlambda_1(f,v)^{-1} \cdot \AK{\hf}{k} \cdot \tlambda_1(f,v)} = \AK{\hf}{\BK{\hv^{-1}}{k}}.\]
Manipulating this a bit, we see that it is equivalent to showing that
\[\AK{\hf}{k} = \tlambda_1(f,v) \cdot \BK{\AB{\hf}{\hv}}{\AK{\hf}{\BK{\hv^{-1}}{k}}} \cdot \tlambda_1(f,v)^{-1}.\]
Using the already proven case $w=v$ of the claim, the right hand side equals
\[\AK{\hf}{\BK{\hv}{\BK{\hv^{-1}}{k}}} = \AK{\hf}{k},\]
as desired.

Now assume that $w$ has length at least $2$ and that the claim is true for all shorter words.  Write
$w = w_1 w_2$, where $w_1$ and $w_2$ are shorter words than $w$.  Applying Claim \ref{claim:one}, we see
that $\tlambda_1(f,w_1 w_2) \cdot \BK{\AB{\hf}{\widehat{w_1 w_2}}}{\AK{\hf}{k}} \cdot \tlambda_1(f,w_1 w_2)^{-1}$
equals
\begin{equation}
\label{eqn:claim3intermediate}
\tlambda_1(f,w_1) \cdot \BK{\AB{\hf}{\hw_1}}{\tlambda_1(f,w_2) \cdot \BK{\AB{\hf}{\hw_2}}{\AK{\hf}{k}} \cdot \tlambda_1(f,w_2)^{-1}} \cdot \tlambda_1(f,w_1)^{-1}.
\end{equation}
Our inductive hypothesis implies that
\[\tlambda_1(f,w_2) \cdot \BK{\AB{\hf}{\hw_2}}{\AK{\hf}{k}} \cdot \tlambda_1(f,w_2)^{-1} = \AK{\hf}{\BK{\hw_2}{k}}.\]
Thus \eqref{eqn:claim3intermediate} equals
\[\tlambda_1(f,w_1) \cdot \BK{\AB{\hf}{\hw_1}}{\AK{\hf}{\BK{\hw_2}{k}}} \cdot \tlambda_1(f,w_1)^{-1}.\]
Another application of our inductive hypothesis shows that this equals
\[\AK{\hf}{\BK{\hw_1}{\BK{\hw_2}{k}}} = \AK{\hf}{\BK{\widehat{w_1 w_2}}{k}}. \qedhere\]
\end{proof}

We now construct $\tlambda_2$.  For $w \in (S_A^{\pm 1})^{\ast}$ and $z \in \Z^n$, we define
$\tlambda_2(w,z) \in \Gamma_n$ by induction on the length of $w$.  If $w = 1$ (i.e.\ $w$ has length $0$), then
we define $\tlambda_2(w,z) = 1$.  If $w \in S_A^{\pm 1}$ (i.e.\ $w$ has length $1$), then we
define $\tlambda_2(w,z) = \lambda_1(w,z)$.  Finally,
if $w$ has length at least $2$ and $\tlambda_2(\cdot,z)$ has been defined for all shorter words, then
write $w = s w'$ with $s \in S_A^{\pm 1}$ and define
\[\tlambda_2(w,z) = \AK{\hs}{\tlambda_2(w',z)} \cdot \tlambda_2(a_1,\AB{\hw'}{z}).\]
This formula should remind the reader of property TB2 from the definition of a twisted bilinear map, as
should the following claim.

\begin{claimsb}
\label{claim:four}
$\tlambda_2(w w',z) = \AK{\hw}{\tlambda_2(w',z)} \cdot \tlambda_2(w,\AB{\hw'}{z})$ for
$w,w' \in (S_A^{\pm 1})^{\ast}$ and $z \in \Z^n$.
\end{claimsb}
\begin{proof}[Proof of claim]
This can be proved by induction on the length of $w$ just like Claim \ref{claim:one}.  The details are left
to the reader.
\end{proof}

The reader might expect at this point that we would prove an analogue of Claim \ref{claim:two} and thus
show that $\tlambda_2$ descends to a map $\lambda_2\co \Aut(F_n) \times \Z^n \rightarrow \Gamma_n$.  However,
before we can do this we must prove two preliminary results.  The first extends Claim \ref{claim:three}
to show that $\tlambda_2$ satisfies a version of condition TB3.

\begin{claimsb}
\label{claim:five}
$\tlambda_2(w,z) \cdot \BK{\AB{\hw}{z}}{\AK{\hw}{k}} \cdot \tlambda_2(w,z)^{-1} = \AK{\hw}{\BK{z}{k}}$
for $w \in (S_A^{\pm})^{\ast}$, $z \in \Z^n$ and $k \in \Gamma_n$.
\end{claimsb}
\begin{proof}[Proof of claim]
This can be proved by induction on the length of $w$ just like Claim \ref{claim:three}.  The details are left
to the reader.
\end{proof}

The next claim extends Claim \ref{claim:one} to show that $\tlambda_2$ satisfies a version of condition TB1.

\begin{claimsb}
\label{claim:six}
$\tlambda_2(w,z z') = \tlambda_2(w,z) \cdot \BK{\AB{\hw}{z}}{\tlambda_2(w,z')}$
for $w \in (S_A^{\pm 1})^{\ast}$ and $z,z' \in \Z^n$.
\end{claimsb}
\begin{proof}[Proof of claim]
The proof is by induction on the length of $w$.  For $w$ of length $0$, this is trivial, and
for $w$ of length $1$, it holds by Claim \ref{claim:one}.  Now assume that $w$ has length at least $2$
and that the desired formula holds whenever $w$ has smaller length.  Write $w = w_1 w_2$, where
$w_1$ and $w_2$ are shorter words than $w$.  Applying Claim \ref{claim:four} and our inductive hypothesis,
we see that
\begin{align}
\tlambda_2(w,z z') &= \AK{\hw_1}{\tlambda_2(w_2,z z')} \cdot \tlambda_2(w_1, \AB{\hw_2}{z} \AB{\hw_2}{z'}) \notag\\
&= \AK{\hw_1}{\tlambda_2(w_2,z) \cdot \BK{\AB{\hw_2}{z}}{\tlambda_2(w_2,z')}} \cdot
\tlambda_2(w_1,\AB{\hw_2}{z}) \cdot \BK{\AB{\hw_1}{z}}{\tlambda_2(w_1,\AB{\hw_2}{z'})}. \label{eqn:lefthandside}
\end{align}
Also, Claim \ref{claim:four} implies that $\tlambda_2(w,z) \cdot \BK{\AB{\hw}{z}}{\tlambda_2(w,z')}$ equals
\begin{equation}
\label{eqn:righthandside}
\AK{\hw_1}{\tlambda_2(w_2,z)} \cdot \tlambda_2(w_1,\AB{\hw_2}{z}) \cdot
\BK{\AB{\widehat{w_1 w_2}}{z}}{\AK{\hw_1}{\tlambda_2(w_2,z')} \cdot \tlambda_2(w_1,\AB{\hw_2}{z'})}.
\end{equation}
Our goal is to prove that \eqref{eqn:lefthandside} equals \eqref{eqn:righthandside}.  Manipulating this,
we see that our goal is equivalent to showing that
\[\tlambda_2(w_1,\AB{\hw_2}{z}) \cdot \BK{\AB{\widehat{w_1 w_2}}{z}}{\AK{\hw_1}{\tlambda_2(w_2,z')}} \cdot \tlambda_2(w_1,\AB{\hw_2}{z})^{-1}
=
\AK{\hw_1}{\BK{\AB{\hw_2}{z}}{\tlambda_2(w_2,z')}}.\]
This is an immediate consequence of Claim \ref{claim:five}.
\end{proof}

We finally prove the promised analogue of Claim \ref{claim:two}.

\begin{claimsb}
\label{claim:seven}
For $w,w' \in (S_A^{\pm 1})^{\ast}$ with $\hw = \hw' \in \Aut(F_n)$, we have $\tlambda_2(w,z) = \tlambda_2(w',z)$ for
$z \in \Z^n$.
\end{claimsb}
\begin{proof}[Proof of claim]
Recall that $\Aut(F_n) = \GroupPres{S_A}{R_A}$.
Define $R_A' = R_A \cup \Set{$s s^{-1}$}{$s \in S_A^{\pm 1}$} \subset (S_A^{\pm 1})^{\ast}$.
A computer calculation described below in Lemma \ref{lemma:computerrelationsaut} shows that
$\tlambda_2(r,\hs) = 1$ for $r \in R_A'$ and $s \in S_Z^{\pm 1}$.  Writing $z$ as a product of elements of
$S_Z^{\pm 1}$, we can use Claim \ref{claim:six} to show that $\tlambda_2(r,z) = 1$ for $r \in R_A'$.  The
proof now is identical to the proof of Claim \ref{claim:two}; the details are left to the reader.
\end{proof}

Claim \ref{claim:seven} implies that $\tlambda_2$ descends to a map $\lambda_2\co\Aut(F_n) \times \Z^n \rightarrow \Gamma_n$.
This map is a twisted bilinear map: Claim \ref{claim:six} implies that it satisfies condition TB1, 
Claim \ref{claim:four} implies that it satisfies condition TB2, and Claim \ref{claim:five} 
implies that it satisfies condition TB3.  As discussed at the beginning of the proof, $\lambda = \lambda_2$
is the twisted bilinear map whose existence we are trying to prove.
\end{proof}

\subsection{Proof of L-presentation}
\label{section:lpresentationproof}

We now prove Theorem \ref{theorem:preciselpres}.

\begin{proof}[{Proof of Theorem \ref{theorem:preciselpres}}]
Let $\Gamma_n = \LPres{S_K}{R_K^0}{\phi(S_Q^{\pm1})}$
be the group with the presentation described in Theorem \ref{theorem:preciselpres}.  
We map each generator of $\Gamma_n$ to the generator of $\BKerIA{n}{1}$ with the same name.
Lemma~\ref{lemma:computerpenultimatecalc} below checks that the basic relations $R_K^0$ are true in $\BKerIA{n}{1}$; it then follows from the naturality from Lemmas~\ref{lemma:action1} and~\ref{lemma:action2} that the extended relations of $\Gamma_n$ are also true in $\BKerIA{n}{1}$.
Therefore we have defined a homomorphism $\Psi\co\Gamma_n \rightarrow \BKerIA{n}{1}$.
Since our generating set from Theorem~\ref{maintheorem:generators} is in the image of $\Psi$, we know $\Psi$ is a surjection;
our goal is to show that $\Psi$ is an isomorphism.  

For $f \in \Aut(F_n)$, let $\alpha_f\co\Gamma_n \rightarrow \Gamma_n$ be the homomorphism
given by Lemma \ref{lemma:action1}.  Also, for $z \in \Z^n$, let $\beta_z\co\Gamma_n \rightarrow \Gamma_n$
be the homomorphism given by Lemma \ref{lemma:action2}.  Finally, let
$\lambda\co \Aut(F_n) \times \Z^n \rightarrow \Gamma_n$ be the twisted bilinear map given
by Lemma \ref{lemma:twistedbilinear}.  Plugging this data
into Theorem \ref{theorem:twistedextend}, we obtain a short exact sequence
\[1 \longrightarrow \Gamma_n \longrightarrow \Delta_n \stackrel{\rho}{\longrightarrow} \zasdr \longrightarrow 1\]
together with homomorphisms $\iota_1\co\Aut(F_n) \rightarrow \Delta_n$ and
$\iota_2\co\Z^n \rightarrow \Delta_n$ such that $\rho \circ \iota_1 = \text{id}$
and $\rho \circ \iota_2 = \text{id}$.  The naturality properties of the data
in Lemmas \ref{lemma:action1}, \ref{lemma:action2}, and \ref{lemma:twistedbilinear}
imply that this short exact sequence fits into a commutative diagram
\begin{equation}
\label{eqn:maindiagramused}
\begin{CD}
1  @>>> \Gamma_n        @>>> \Delta_n       @>>> \zasd @>>> 1  \\
@.      @VV{\Psi}V               @VV{\Phi}V              @VV{=}V                     @. \\
1  @>>> \BKerIA{n}{1} @>>> \AutFB{n}{1} @>>> \zasd @>>> 1.
\end{CD}
\end{equation}
By the five lemma, we see that to prove that $\Psi$ is an isomorphism, it
is enough to prove that $\Phi$ is an isomorphism.  We will do this
by constructing an explicit inverse homomorphism $\Phi^{-1}\co\AutFB{n}{1} \rightarrow \Delta_n$.

To do this, we first need some explicit elements of $\Delta_n$ and some relations
between those elements.  The needed elements are as follows.
\begin{compactitem}
\item We will identify the generating set 
\[S_K = \Set{$\kCon{y}{x_a}$, $\kCon{x_a}{y}$}{$x_a\in X$} \cup \Set{$\kMul{x_a^\alpha}{[y^\epsilon,x_b^\beta]}$}{$x_a,x_b\in X$, $x_a\neq x_b$, $\alpha,\beta,\epsilon\in\{1,-1,\}$}\]
for $\Gamma_n$ with its image in $\Delta_n$.  
\item For $\alpha \in \{1,-1\}$ and distinct $x_a, x_b \in X$, we define $\kMul{x_a^{\alpha}}{x_b} \in \Delta_n$
to equal $\iota_A(\Mul{x_a^{\alpha}}{x_b})$.  
\item For distinct $x_a, x_b \in X$, we define $\kSwap{a}{b}$ to equal $\iota_A(\Swap{a}{b})$.  
\item For $x_a \in X$, we define $\kInv{a}$ to equal $\iota_A(\Inv{a})$.
\item As in the proof of Lemma \ref{lemma:action2}, we will regard $\Z^n$ as being
generated by the set $\Set{$\Mul{x_a}{y}$}{$x_a \in X$}$, and for $x_a \in X$
we define $\kMul{x_a}{y}$ to equal $\iota_B(\Mul{x_a}{y})$.
\end{compactitem}
The needed relations are as follows.  That they hold is immediate from the construction of $\Delta_n$
in the proof of Theorem \ref{theorem:twistedextend}.
\begin{compactitem}
\item The relations $R_K^0$ from the L-presentation for $\Gamma_n$.
\item By construction, the group $\Delta_n$ contains subgroups $\Gamma_n \rtimes \Aut(F_n)$
and $\Gamma_n \rtimes \Z^n$.  Any relation which holds in $\Gamma_n \rtimes \Aut(F_n)$
or $\Gamma_n \rtimes \Z^n$ (which are generated by the evident elements) also holds in $\Delta_n$.
\item For $f \in \Aut(F_n)$ and $z \in \Z^n$, we have $\lambda(f,z) = f z f^{-1} \AB{f}{z}^{-1}$.  Here
$\AB{f}{z}$ comes from the action of $\Aut(F_n)$ on $\Z^n$ in the semidirect product $\zasd$.  Also,
$f \in \Aut(F_n)$ and $z \in \Z^n$ and $\AB{f}{z} \in \Z^n$ should be identified with their images in $\Delta_n$.
\end{compactitem}

Let $\AutFB{n}{1} = \Pres{S_C}{R_C}$ be the presentation given by
Theorem \ref{th:JensenWahl}, so 
\begin{align*}
S_C = &\Set{$\Mul{x_a^{\alpha}}{x_b}$}{$1 \leq a,b \leq n$ distinct, $\alpha \in \{1,-1\}$}
\cup \Set{$\Swap{a}{b}$}{$1 \leq a < b \leq n$}\\
&\cup \Set{$\Inv{a}$}{$1 \leq a \leq n$}
\cup \Set{$\Mul{x_a^\alpha}{y}$,$\Con{y}{x_a}$}{$1\leq a\leq n$, $\alpha\in\{1,-1\}$}.
\end{align*}
We define a set map $\widetilde{\Phi}^{-1}\co S_C \rightarrow \Delta_n$ as follows.  First, most of the
elements in $S_C$ have evident analogues in $\Delta_n$, so we define
\begin{align*}
&\widetilde{\Phi}^{-1}(\Mul{x_a^{\alpha}}{x_b}) = \kMul{x_a^{\alpha}}{x_b} \quad \text{and} \quad
\widetilde{\Phi}^{-1}(\Swap{a}{b}) = \kSwap{a}{b} \quad \text{and} \quad
\widetilde{\Phi}^{-1}(\Inv{a}) = \kInv{a}\\
& \quad \text{and} \quad
\widetilde{\Phi}^{-1}(\Mul{x_a}{y}) = \kMul{x_a}{y} \quad \text{and} \quad
\widetilde{\Phi}^{-1}(\Con{y}{x_a}) = \kCon{y}{x_a}.
\end{align*}
The only remaining element of $S_C$ is $\Mul{x_a^{-1}}{y}$, and we define
\[\widetilde{\Phi}^{-1}(\Mul{x_a^{-1}}{y}) = \kCon{x_a}{y} \kMul{x_a}{y}^{-1}.\]
The map $\widetilde{\Phi}^{-1}$ extends to a homomorphism $\widetilde{\Phi}^{-1}\co F(S_C) \rightarrow \Delta_n$.
Computer calculations described in Lemma \ref{lemma:computerfinalcalc} below show that $\widetilde{\Phi}^{-1}(r) = 1$
for $r \in R_C$, so $\widetilde{\Phi}^{-1}$ descends to a homomorphism $\Phi^{-1}\co \AutFB{n}{1} \rightarrow \Delta_n$.
Examining its effect on generators, we see that $\Phi^{-1}$ is the desired inverse to $\Phi$, and the
proof is complete.
\end{proof}

\section{Computer calculations}
\label{section:computer}

This section discusses the computer calculations used in the previous section.  The
preliminary section \S \ref{section:computerdisc} discusses the basic framework we use.  The
actual computations are in \S \ref{section:computercalc}.

\subsection{Framework for calculations}
\label{section:computerdisc}

We model $\Aut(F_{n+1})$ using GAP, a software algebra system available for free at \url{http://www.gap-system.org/}.
We encourage our readers to experiment with the included functions, and to look at the code that performs the verifications below.
We use the same framework that the authors used in~\cite{DayPutmanH2}, so we quote part of our explanation of the framework from there.
From~\cite{DayPutmanH2}:
\begin{quote}
We use GAP's built-in functionality to model $F_n$ as a free group on the eight generators \verb+xa+, \verb+xb+, \verb+xc+, \verb+xd+, \verb+xe+, \verb+xf+, \verb+xg+, and \verb+y+.
Since our computations never involve more than $8$ variables, computations in this group suffice to show that our computations hold in general.
\end{quote}

%We use GAP's built-in functionality to model $F_{n+1}$ as a free group on the eight generators \verb+xa+, \verb+xb+, \verb+xc+, \verb+xd+, \verb+xe+, \verb+xf+, \verb+xg+, and \verb+y+.
%Our results hold for $n+1>8$, but the computations we need to verify never involve more than eight variables.

We found it more convenient to model the free groups $F(S_A)$, $F(S_Q)$, and $F(S_K)$ without using the built-in free group functionality.
Instead we model the generators using lists and program the basic free group operations directly.
Continuing from~\cite{DayPutmanH2}:
\begin{quote}
For example, we model the generator $\Mul{x_a}{x_b}$ as the list \verb+["M",xa,xb]+, $\Con{y}{x_a}$ as \verb+["C",y,xa]+, and $\Mulcomm{x_a^{-1}}{y}{x_c}$ as \verb+["Mc",xa^-1,y,xc]+.
We model $P_{a,b}$ as \verb+["P",xa,xb]+ and $I_a$ as \verb+["I",xa]+.
The examples should make clear: the first entry in the list is a string key \verb+"M"+, \verb+"C"+, \verb+"Mc"+, \verb+"P"+, or \verb+"I"+, indicating whether the list represents a transvection, conjugation move, commutator transvection, swap or inversion.
The parameters given as subscripts in the generator are then the remaining elements of the list, in the same order.
\end{quote}

We model words in any of the free groups $F(S_A)$, $F(S_Q)$, and $F(S_K)$ as lists of generators.
Continuing from~\cite{DayPutmanH2}:
\begin{quote}
We model inverses of generators as follows:
the inverse of \verb+["M",xa,xb]+ is \verb+["M",xa,xb^-1]+ and the inverse of \verb+["C",xa,xb]+ is  \verb+["C",xa,xb^-1]+, but the inverse of \verb+["Mc",xa,xb,xc]+ is \verb+["Mc",xa,xc,xb]+.
Swaps and inversions are their own inverses.
Technically, this means that \ldots we model structures where the order relations for swaps and inversions and the relation \ldots for inverting commutator transvections are built in.
This is not a problem because our verifications always show that certain formulas are trivial modulo our relations \ldots.
\end{quote}

In particular, the inverse of \verb+["Mc",xa,y,xb]+ is modeled as \verb+["Mc",xa,xb,y]+.
Continuing from~\cite{DayPutmanH2}:
\begin{quote}
The empty word \verb+[]+ represents the trivial element.
We wrote several functions \ldots that perform common tasks on words.
The function \verb+pw+ takes any number of words (reduced or not) as arguments and returns the freely reduced product of those  words in the given order, as a single word.
The function \verb+iw+ inverts its input word and the function \verb+cyw+ cyclically permutes its input word.

\ldots 
The function \verb+applyrels+ is particularly useful, because it inserts multiple relations into a word.
It takes in two inputs: a starting word and a list of words with placement indicators.
The function recursively inserts the first word from the list in the starting word at the given position, reduces the word, and then calls itself with the new word as the starting word and with the same list of insertions, with the first dropped.
\end{quote}

Most of the verifications amount to showing that some formula can be expressed as a product of conjugates of images of relations under the substitution rules.
We model the substitution rule function $\phi$ using a function named \verb+phi+.
This takes a word in $\fm{(S_Q^{\pm1})}$ as its first input and a word from $F(S_K)$ as its second input and applies to the second word the composition of substitution rules given by the first one.
We use a function \verb+krel+ to encode the basic relations $R_K^0$ from Theorem~\ref{theorem:preciselpres}.
Given a number $n$ and a list of basis elements (or inverse basis elements) from $F_{n+1}$, \verb+krel+ returns the $n$th relation from Table~\ref{table:basicrelsbkerian}, with the supplied basis elements as subscripts on the $S_K$-generators.
If the parameters are inconsistent, it returns the empty word.
We define a function \verb+psi+ that encodes the action of $\Aut(F_n)$ on $\Z^n$ from Section~\ref{section:relatingkernels}.
We also defines a function \verb+lambda+ that computes the definition of $\tlambda_2$ above; in the special case that its first input is in $S_A^{\pm1}$, it computes $\tlambda_1$ as well (this is also true of $\tlambda_2$ by definition).

The functions described here and the checklists for the computations described below are all given in the file \verb+BirmanIA.g+, which we make available with this paper.
Each of the following lemmas refers to lists of outputs in \verb+BirmanIA.g+.
To check the validity of a given lemma, one needs to read the code that generates the list, evaluate the code, and make sure the output is correct (usually the desired output is a list of copies of the trivial word).
We have provided a list \verb+BirmanIAchecklist+ that gives the output of all the verifications in the paper.

%For example, we model the generator $\Mul{x_a}{x_b}$ as the list \verb+["M",xa,xb]+, $\Con{y}{x_a}$ as \verb+["C",y,xa]+, and $\Mulcomm{x_a^{-1}}{y}{x_c}$ as \verb+["Mc",xa^-1,y,xc]+.
%We model $P_{a,b}$ as \verb+["P",xa,xb]+ and $I_a$ as \verb+["I",xa]+.
%The examples should make clear: the first entry in the list is a string key \verb+"M"+, \verb+"C"+, \verb+"Mc"+, \verb+"P"+, or \verb+"I"+, indicating whether the list represents a transvection, conjugation move, commutator transvection, swap or inversion.
%The parameters given as subscripts in the generator are then the remaining elements of the list, in the same order.
%We model inverses in a funny way:
%the inverse of \verb+["M",xa,xb]+ is \verb+["M",xa,xb^-1]+ and the inverse of \verb+["C",xa,xb]+ is  \verb+["C",xa,xb^-1]+, but the 
%Swaps and inversions are their own inverses.
%Technically, this means that we are not really modeling a free group, but modeling a group where the order relations for swaps and inversions and the relation for inverting commutator transvections are built in.
%This is not a problem because our computations are never meant verify identities in these free groups, but rather in quotients where these relations always hold.

%The function \verb+pw+ takes any number of words (reduced or not) as arguments and returns the freely reduced product of those  words in the given order, as a single word.

\subsection{The actual calculations}
\label{section:computercalc}

In addition to the relations from Table~\ref{table:basicrelsbkerian}, we use some derived relations for convenience.
These are output by a function \verb+exkrel+ and we do not list them here (they can be found by inspecting the code and the outputs from that function).
What matters is that these relations always follow from the relations in the presentation.
\begin{lemma}
All the relations output by \verb+exkrel+ are true in $\Gamma_n$.
\end{lemma}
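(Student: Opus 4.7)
The lemma asserts that a finite list of derived (convenience) relations, produced by the function \verb+exkrel+, all hold in the group $\Gamma_n = \LPres{S_K}{R_K^0}{\phi(S_Q^{\pm1})}$. By the definition of the L-presentation, a word $w \in F(S_K)$ represents the identity of $\Gamma_n$ if and only if $w$ lies in the normal closure of $\{f(r) : f \in M, r \in R_K^0\}$, where $M$ is the monoid generated by $\phi(S_Q^{\pm1})$. So the plan is to certify each derived relation by exhibiting an explicit expression for it as a product of conjugates of images of $R_K^0$-relations under words in $\phi(S_Q^{\pm1})^\ast$.

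The strategy is a direct computer verification of exactly this form, using the infrastructure already described in \S\ref{section:computerdisc}. For each output $w$ of \verb+exkrel+, I will produce (as data encoded in \verb+BirmanIA.g+) a list of insertions of basic relations—each entry specifying a relation from $R_K^0$ (generated via \verb+krel+), a word in $\phi(S_Q^{\pm1})^\ast$ by which to twist it (via \verb+phi+), and a position at which to insert it into the current word. The function \verb+applyrels+ then iteratively inserts and freely reduces, and the verification succeeds precisely when the final reduced word is the empty list \verb+[]+. The lemma is then the assertion that running this procedure on every derived relation in the \verb+exkrel+ output returns a list of empty words. Since each derived relation was introduced by the authors as a shorthand for a pattern that recurs in later proofs, the required substitution sequences were discovered in the course of building those proofs and are bundled into the checklist.

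The only step with any subtlety is choosing, for each derived relation, the correct insertion sequence; but the authors observe that these were found by the same trial-and-error process that produced $R_K^0$ itself, so the certificates already exist in the supplementary file. There is no mathematical obstacle—only bookkeeping—because we are not claiming that $R_K^0$ is complete (that comes later in Theorem~\ref{theorem:preciselpres}), merely that certain redundancies among consequences of $R_K^0$ are in fact consequences. Executing the checklist in \verb+BirmanIAchecklist+ and observing that every output is the trivial word completes the proof.
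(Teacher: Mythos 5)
Your proposal is correct and takes essentially the same approach as the paper: each derived relation from \verb+exkrel+ is reduced to the trivial word in \verb+BirmanIA.g+ by a sequence of insertions (via \verb+applyrels+) of relations from $R_K^0$ and their images under monoid words in $\phi(S_Q^{\pm1})$, and the verification consists of observing that the resulting outputs are all trivial. The paper's implementation additionally permits reusing previously-verified \verb+exkrel+ outputs within later reductions (a chaining shortcut), but this is only a bookkeeping convenience and does not change the substance of the certificate you describe.
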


\begin{proof}
The source for the list \verb+exkrellist+ contains a reduction of one instance of each output of \verb+exkrel+ to the trivial word using only outputs from \verb+krel+, images of outputs from \verb+krel+ under the action of \verb+phi+, and previously verified relations from \verb+exkrel+.
Each of the entries in the list evaluates to the trivial word, so the reductions are correct.
\end{proof}

There are a few places where we verify identities that are homomorphisms on both sides.
To verify these most efficiently, we use generating sets for $\Gamma_n$ that are smaller than $S_K$.
\begin{lemma}\label{le:Gammagenset}
Suppose $S$ is a set containing all the $\{\kCon{x_a}{y}\}_a$ and $\{\kCon{y}{x_a}\}_a$, and for each choice of $a,b$, suppose $S$ contains at least one of the eight elements $\{\kMulcomm{x_a^\alpha}{y^\epsilon}{x_b^\beta}\}_{\alpha,\beta,\epsilon}$.
Then $S$ is a generating set for $\Gamma_n$.
\end{lemma}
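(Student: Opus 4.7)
The plan is to show that for each pair of distinct indices $a,b$, all eight commutator transvections $\kMulcomm{x_a^\alpha}{y^\epsilon}{x_b^\beta}$ (with $\alpha,\beta,\epsilon\in\{1,-1\}$) lie in $\langle S\rangle$. Since the two classes of generators making up $S_K$ are the conjugation moves $\kCon{x_a}{y}$ and $\kCon{y}{x_a}$ (all in $S$ by hypothesis) and these commutator transvections, this is exactly what is needed.

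So fix a pair of distinct indices $a,b$ and suppose $S$ contains a single fixed element $t_0 = \kMulcomm{x_a^{\alpha_0}}{y^{\epsilon_0}}{x_b^{\beta_0}}$. I would prove separately that for each of the three exponent signs $\alpha,\beta,\epsilon$, flipping only that one sign produces an element of $\langle S\rangle$. Once these three toggles are in hand, composing them reaches any of the remaining seven variants. The toggles come directly from the basic relations in Table~\ref{table:basicrelsbkerian}:
\begin{itemize}
\item Flipping $\beta$: Relation R4, rewritten via the convention $\kMulcomm{x_a^\alpha}{x_b^{-\beta}}{y^\epsilon} = \kMulcomm{x_a^\alpha}{y^\epsilon}{x_b^{-\beta}}^{-1}$, expresses $\kMulcomm{x_a^\alpha}{y^\epsilon}{x_b^{-\beta}}$ as the inverse of a conjugate of $\kMulcomm{x_a^\alpha}{y^\epsilon}{x_b^\beta}$ by $\kCon{y}{x_b}^{\pm\beta}$.
\item Flipping $\epsilon$: Relation R5 expresses $\kMulcomm{x_a^\alpha}{y^{-\epsilon}}{x_b^\beta}$ as the inverse of a conjugate of $\kMulcomm{x_a^\alpha}{y^\epsilon}{x_b^\beta}$ by $\kCon{x_b}{y}^{\pm\epsilon}$ (and R6 gives an alternative version using $\kCon{x_a}{y}$).
\item Flipping $\alpha$: Relation R7 solves algebraically for $\kMulcomm{x_a^{-\alpha}}{y^\epsilon}{x_b^\beta}$ as $\kMulcomm{x_a^\alpha}{y^\epsilon}{x_b^\beta}^{-1}$ multiplied by a commutator of $\kCon$'s.
\end{itemize}

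In each case the ambient adjustments are built only from $\kCon$ elements, which already lie in $S$. Hence starting from $t_0$ and applying the three toggles in any combination, one obtains all eight values of $\kMulcomm{x_a^\alpha}{y^\epsilon}{x_b^\beta}$ inside $\langle S\rangle$. Ranging over all pairs $(a,b)$ covers every generator in $S_K$, so $\langle S\rangle=\Gamma_n$ as required.

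There is no real obstacle here: the relations R4, R5, R7 were designed to individually toggle the three exponents, so the proof is just careful bookkeeping with the synonym convention $\kMulcomm{x_a^\alpha}{x_b^\beta}{y^\epsilon}=\kMulcomm{x_a^\alpha}{y^\epsilon}{x_b^\beta}^{-1}$. The only mild care required is to keep track of the fact that R4, R5, and R7 produce \emph{inverses} of the target generator in some places; but inverses of elements of $\langle S\rangle$ are again in $\langle S\rangle$, so this is harmless.
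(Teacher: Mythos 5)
Your proof is correct and follows essentially the same approach as the paper's: both use R4, R5, and R7 to toggle the three sign parameters $\beta$, $\epsilon$, $\alpha$ individually (with conjugators and extra factors built from $\kCon$ elements, which are all in $S$), then compose toggles to reach all eight commutator transvections for each pair $(a,b)$. Your version just spells out the synonym convention $\kMulcomm{x_a^\alpha}{x_b^\beta}{y^\epsilon}=\kMulcomm{x_a^\alpha}{y^\epsilon}{x_b^\beta}^{-1}$ and the resulting inverses more carefully, which the paper leaves implicit.
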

\begin{proof}
Suppose $S$ contains $\kMulcomm{x_a^\alpha}{y^\epsilon}{x_b^\beta}$ and all the conjugation moves above.
Then: 
 we can use R4 to express $\kMulcomm{x_a^\alpha}{y^\epsilon}{x_b^{-\beta}}$ in terms of elements of $S$,
we can use R5 to express $\kMulcomm{x_a^\alpha}{y^{-\epsilon}}{x_b^\beta}$ in terms of elements of $S$, and we can use R7 to express $\kMulcomm{x_a^{-\alpha}}{y^\epsilon}{x_b^\beta}$ in terms of elements of $S$.
Applying these relations repeatedly allows us to get all of the eight commutator transvections involving $x_a$ and $x_b$ from one of them.
\end{proof}

\begin{lemma}
\label{lemma:computerphiproperty}
Let $\Psi\co F(S_K) \rightarrow \BKerIA{n}{1}$ be the natural surjection.  Then
regarding $S_Q$ as a subset of $\AutFB{n}{1}$, we have
\[\Psi(\phi(s)(t)) = s \Psi(t) s^{-1} \quad \quad (s \in S_Q, t \in S_K).\]
\end{lemma}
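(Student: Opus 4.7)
The plan is to reduce this to a finite check using the observation that an automorphism of $F_{n,1}$ is determined by its effect on the free basis $\{x_1,\dots,x_n,y\}$. Both sides of the claimed equality $\Psi(\phi(s)(t)) = s\Psi(t) s^{-1}$ lie in $\BKerIA{n}{1} \subset \Aut(F_{n,1})$, so to verify the identity for a given pair $(s,t) \in S_Q^{\pm 1} \times S_K$, it suffices to apply both sides to each basis element of $F_{n,1}$ and check that the resulting elements of $F_{n,1}$ are literally equal after free reduction.

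First I would enumerate the finite list of pairs $(s,t)$ we must check. The set $S_Q^{\pm 1}$ consists of swaps $\kSwap{a}{b}$, inversions $\kInv{a}^{\pm 1}$, and transvections $\Mul{x_a^\alpha}{x_b}^{\pm 1}$ and $\Mul{x_a^\alpha}{y}^{\pm 1}$; the set $S_K$ consists of the generators $\kCon{y}{x_a}$, $\kCon{x_a}{y}$, and $\kMulcomm{x_a^\alpha}{y^\epsilon}{x_b^\beta}$. Since all generators in $S_K$ involve at most three distinct basis elements (namely $y$ and at most two $x$'s), and each element of $S_Q$ involves at most two basis elements of $X$, every identity involves at most four distinct basis symbols. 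Thus all cases can be modeled inside $F_{n+1}$ for $n+1 \leq 8$, which is exactly the setting of the GAP framework described in \S\ref{section:computerdisc}.

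Next, for each such pair I would use GAP to compute the right-hand side $s \Psi(t) s^{-1}$ by composing automorphisms of the free group (using the built-in free group multiplication and the explicit definitions of $\Mul{}{}$, $\Con{}{}$, $\kSwap{}{}$, $\kInv{}{}$ from the introduction), and then evaluate the resulting automorphism on each basis element. Simultaneously, I would compute $\Psi(\phi(s)(t))$ by reading $\phi(s)(t) \in F(S_K)$ from Table~\ref{table:definephi} (or from the trivial-case rules just above it), substituting each symbol by its actual automorphism in $\AutFB{n}{1}$, and then evaluating the composition on each basis element. The verification is then a pointwise equality of freely reduced words in $F_{n+1}$.

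The main obstacle is purely bookkeeping: Table~\ref{table:definephi} encodes on the order of thirty formulas, each of which gets multiplied by the eight sign choices $\alpha,\beta,\gamma,\epsilon,\zeta \in \{\pm 1\}$ and by the various case splits distinguishing which subscripts coincide. The trivial cases (where $\phi(s)(t)=t$) must also be verified, since they amount to the claim that $s$ commutes with $\Psi(t)$. To make the check tractable, I would package the enumeration as a loop over a list whose entries are tuples (key for $s$, key for $t$, list of subscripts) together with their expected $\phi(s)(t)$; for each entry the script computes both sides, reduces, and compares, returning the trivial word if and only if they agree. The final output is the list of comparisons, all of which must be trivial; this is exactly the style of output stored in \verb+BirmanIAchecklist+, so the verification fits directly into the framework already established in \S\ref{section:computerdisc}.
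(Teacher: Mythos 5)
Your proposal is essentially the same in spirit as the paper's: a computer verification in GAP that $\Psi(\phi(s)(t))$ and $s\Psi(t)s^{-1}$ agree as automorphisms of the free group, checked by evaluating both on the free basis and comparing freely reduced words. Where you diverge is in the case enumeration. The paper first eliminates two families of cases by short mathematical observations rather than computation: when $s$ is a swap or inversion, the identity $\Psi(\phi(s)(t))=s\Psi(t)s^{-1}$ is true by definition, since $\phi$ for swaps and inversions is precisely the natural action on the subscripts of $S_K$-generators, and conjugation by swaps/inversions in $\Aut(F_{n,1})$ permutes and inverts basis elements in exactly that way. The paper further reduces $s=\Mul{x_a^{-1}}{x_b}$ to the case $s=\Mul{x_a}{x_b}$ by conjugating the whole identity by the inversion $\Inv{a}$, leaving only $s$ of the form $\Mul{x_a}{x_b}$ or $\Mul{x_a}{y}$ to be checked by machine. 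Your fully brute-force enumeration over all of $S_Q^{\pm 1}$ (and indeed over $S_Q^{\pm 1}$ rather than the $S_Q$ that the lemma actually asserts) is sound and would succeed, but it involves substantially more cases than necessary and does not exploit the structure that makes the swap/inversion cases trivial; the paper's reductions are the practical content that keeps \texttt{phiconjugationlist} small and readable.
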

\begin{proof}
First we note that this is clearly true, by definition, if $s$ is a swap or inversion.
Further, if we verify this for $s=\kMul{x_a}{x_b}$, then it follows for $s=\kMul{x_a^{-1}}{x_b}$ by conjugating the entire expression by an inversion.
So it is enough to verify it for $s$  of the form $\kMul{x_a}{x_b}$ or $\kMul{x_a}{y}$.
In the code generating the list \verb+phiconjugationlist+, we check this equation for both such choices of $s$, and for all possible configurations of generator $t$ with respect to the choice of $s$.
\end{proof}

\begin{lemma}
\label{lemma:computeraction11}
The map
$\zeta\co (S_A^{\pm 1})^{\ast} \rightarrow \End(\Gamma_n)$
induced by $\phi\co S_Q^{\pm 1} \rightarrow \End(F(S_K))$
satisfies
\[\zeta(s)(\zeta(s^{-1})(t)) = t\]
for all $s\in S_A^{\pm1}$ and all $t\in S_K$.
\end{lemma}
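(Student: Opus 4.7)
The plan is to reduce the verification to a finite case analysis and then delegate the mechanical word-reduction to the computer framework described in \S\ref{section:computerdisc}. I would split according to the type of generator $s \in S_A^{\pm 1}$.

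For the easy case, when $s$ is a swap $\Swap{a}{b}$ or an inversion $\Inv{a}$, the definition of $\phi$ forces $\phi(s^{-1}) = \phi(s)$, and $\phi(s)$ acts on each $t \in S_K$ simply by permuting or signing the $X$-subscripts of $t$ (with the convention $\kCon{x_a^{-1}}{y} = \kCon{x_a}{y}$ and $\kCon{y}{x_a^{-1}} = \kCon{y}{x_a}^{-1}$ kept in mind). Because the subscript action is a literal involution of the indexing set, $\phi(s)(\phi(s)(t)) = t$ already as a word in $F(S_K)$, with no appeal to relations. These cases therefore need no computer check beyond bookkeeping.

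For the harder case, when $s$ is of the form $\Mul{x_a^\alpha}{y}^{\pm 1}$ or $\Mul{x_a^\alpha}{x_b}^{\pm 1}$, I would enumerate, for each such $s$ and each $t \in S_K$, the (finitely many) configurations of the subscripts of $t$ relative to $\{a,b,y\}$; for each configuration I would look up $\phi(s^{-1})(t)$ in Table~\ref{table:definephi}, expand $\phi(s)$ letter-by-letter over the resulting word, and then rewrite the output in $F(S_K)$ using the basic relations $R_K^0$, their derived forms produced by \verb+exkrel+, and the known-preserved images of these under $\phi$. The rewriting is executed by \verb+applyrels+, and the checklist is the entry of \verb+BirmanIA.g+ keyed to this lemma, whose every output evaluates to the empty word.

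The main obstacle is the transvection case $s = \Mul{x_a^\alpha}{x_b}^\beta$: Table~\ref{table:definephi} produces roughly fifteen non-trivial rows, each a multi-letter word that expands further when $\phi(s)$ is applied, and the resulting words have to be collapsed using a nontrivial combination of the commutator-type relations R2--R10. The challenge is not conceptual but combinatorial: isolating, for each subscript configuration, the correct sequence of insertions of relations. Once this bookkeeping is organized, the computer dispatches each case; the source code producing the checklist serves as the actual proof certificate.
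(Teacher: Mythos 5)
Your proof matches the paper's: swaps and inversions are dispatched directly from the definition, and the transvection case is delegated to the computer using the word-reduction and relation machinery of \S\ref{section:computerdisc}. One scope slip worth correcting: $\Mul{x_a^\alpha}{y}^{\pm 1}$ does not lie in $S_A^{\pm 1}$ (it lies in $S_Z \subset S_Q \setminus S_A$), so that case belongs to Lemma~\ref{lemma:computeraction21} rather than to this lemma; here the only non-trivial generators to check are the transvections $\Mul{x_a^\alpha}{x_b}^{\pm 1}$. The paper additionally trims the computation in two ways you don't mention: it first reduces to a (smaller) generating set of $\Gamma_n$ via Lemma~\ref{le:Gammagenset}, verifying that $\zeta(s)\circ\zeta(s^{-1})$ is the identity endomorphism of $\Gamma_n$ rather than checking all of $S_K$ directly, and it deduces the $\Mul{x_a^{-1}}{x_b}$ case from the $\Mul{x_a}{x_b}$ case by the substitution $x_a \mapsto x_a^{-1}$. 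These are optimizations, not a change of strategy.
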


\begin{proof}
In fact, it is enough to show this for $t$ in a generating set for $\Gamma_n$, since then $\zeta(s)\circ\zeta(s^{-1})$ is the identity endomorphism of $\Gamma_n$.
If $s$ is a swap or inversion, then the lemma follows immediately from the definition.
So we verify that $\phi(s)(\phi(s^{-1})(t))=t$ (up to relations of $\Gamma_n)$ for $s$ of the form $\kMul{x_a}{x_b}$, and for enough choices of $t$ to give a generating set for $\Gamma_n$ (using Lemma~\ref{le:Gammagenset}.
This computation appears in the code generating the list \verb+phiAinverselist+.
For $s$ of the form $\kMul{x_a^{-1}}{x_b}$, our computations for $s=\kMul{x_a}{x_b}$ suffice, after substituting $x_a$ for $x_a^{-1}$ in each computation.
\end{proof}

\begin{lemma}
\label{lemma:computeraction12}
The map
$\eta\co F(S_A) \rightarrow \Aut(\Gamma_n)$
induced by $\phi\co S_Q^{\pm 1} \rightarrow \End(F(S_K))$
satisfies
\[\eta(r)(t)=t\]
for all $t\in S_K$ and for every relation $r\in R_A$ from Nielsen's presentation for $\Aut(F_n)=\langle  S_A|R_A\rangle$.
\end{lemma}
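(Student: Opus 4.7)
The plan is to verify this by a direct computer check, in the same style as Lemma \ref{lemma:computeraction11}. Recall that $\eta$ is already well-defined as a map from $F(S_A)$ to $\Aut(\Gamma_n)$ by the previous lemma, so we need only check that each defining relation $r \in R_A$ acts trivially on each generator $t \in S_K$ after applying the induced endomorphisms and reducing modulo the basic relations $R_K^0$ (and their $\phi(S_Q^{\pm 1})$-images).

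First I would exploit Lemma \ref{le:Gammagenset} to shrink the set of generators $t$ that must be tested: for each unordered pair $\{a,b\}$ it is enough to test a single commutator transvection $\kMulcomm{x_a^\alpha}{y^\epsilon}{x_b^\beta}$, together with all the conjugation moves $\kCon{x_a}{y}$ and $\kCon{y}{x_a}$. This is crucial because the Nielsen relations in $R_A$ involve up to four distinct basis elements, so the number of cases grows rapidly in $n$; fixing $n$ at a sufficiently large constant (eight suffices, matching the GAP setup) and using this reduced generating set makes the computation tractable. I would also use the inversion symmetry (conjugation by $\Inv{a}$) to reduce relations involving $\Mul{x_a^{-1}}{x_b}$ to those involving $\Mul{x_a}{x_b}$, so that the bulk of the work concerns Nielsen's relations N1--N5 where every transvection has a positive exponent on its first argument.

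For each relation $r = s_1 s_2 \cdots s_m \in R_A$ (with $s_i \in S_A^{\pm 1}$) and each test generator $t$, I would compute $\phi(s_1) \circ \phi(s_2) \circ \cdots \circ \phi(s_m)(t) \in F(S_K)$ using the function \verb+phi+, and then exhibit an explicit reduction of this word to $t$ as a product of conjugates of elements of \verb+exkrel+ (which were already shown to hold in $\Gamma_n$) using \verb+applyrels+. Each such reduction, when evaluated, must produce the trivial word. Packaging all these reductions into a list, analogous to \verb+phiAinverselist+ from the proof of Lemma \ref{lemma:computeraction11}, and verifying that every entry evaluates to the empty word, completes the proof.

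The main obstacle is not conceptual but combinatorial: the N5 relation and the ``commuting'' N4 relation each produce long words in $F(S_K)$ after applying $\phi$, and finding the sequence of conjugates of basic relations that reduces each such word to its intended target $t$ is tedious. Organizing the reductions so that they share intermediate normal forms, and taking full advantage of the symmetries under swaps, inversions, and the inversion of commutator transvections, is what makes the verification feasible. Once the check list is assembled and its evaluation returns only trivial words, the lemma follows directly, since this means $\eta(r)$ fixes a generating set of $\Gamma_n$ and hence is the identity automorphism.
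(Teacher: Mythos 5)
Your approach is essentially the paper's: reduce to checking $\phi(r)(t)=t$ in $\Gamma_n$ for $t$ ranging over the smaller generating set furnished by Lemma~\ref{le:Gammagenset}, exploit inversion symmetry to cut the case count, and exhibit explicit reductions to the trivial word modulo $R_K^0$ and its $\phi$-orbit via computer. The one thing you miss is a conceptual shortcut the paper uses: the N1 and N2 relations of Nielsen's presentation hold \emph{automatically}, with no computation needed. For N1, the point is that $\phi$ is defined on swaps and inversions simply by permuting the indices via the natural action on $\{x_1^{\pm1},\dotsc,x_n^{\pm1}\}$, and the N1 relations are exactly relations in that signed permutation action; for N2, the defining formulas for $\phi$ on transvections are visibly equivariant under relabelling by swaps and inversions, because they depend only on the combinatorial pattern of coincidences among the indices, not on the indices themselves. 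This lets the paper restrict its computer lists to N3, N4, and N5 only. Your plan of grinding through N1 and N2 as well would still produce a correct (if redundant) verification, so there is no gap --- just a missed optimization that also gives more insight into why the lemma is true.
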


\begin{proof}
It is enough to check that the equation
\begin{equation}
\label{eq:phirelation}
\phi(r)(t)=t
\end{equation}
holds in $\Gamma_n$ for every relation $r$ from Nielsen's presentation, for choices of $t$ ranging through a generating set for $\Gamma_n$.

Equation~\eqref{eq:phirelation} works automatically for $r$ a relation of type N1, since the action $\phi$ is defined for swaps and inversions using the natural action on $\{x_1^{\pm1},\dotsc,x_n^{\pm1}\}$, and these relations hold for that action.

Equation~\eqref{eq:phirelation} also works automatically for $r$ a relation of type N2.
In this case, the equation says that $s\mapsto\phi(s)$, for $s$ a transvection, is equivariant with respect to the action of swaps and inversions.
This is apparent from the definition of $\phi$: the definition does not refer to specific elements $x_i$, but instead treats configurations the same way based on coincidences between them.

For the other cases, we use computations given in the source code for the lists \verb+phiN3list+, \verb+phiN4list+, and \verb+phiN5list+.
In each list we select a relation $r$ and reduce $\phi(r)(t)t^{-1}$ to $1$ in $\Gamma_n$, for choices of $t$ constituting a generating set by Lemma~\ref{le:Gammagenset}.
In each list we exploit natural symmetries of the equation to reduce the number of cases considered.
\end{proof}

\begin{lemma}
\label{lemma:computeraction21}
The map 
$\zeta\co(S_Z^{\pm 1})^{\ast} \rightarrow \End(\Gamma_n)$
induced by $\phi\co S_Q^{\pm 1} \rightarrow \End(F(S_K))$
satisfies
\[\zeta(s)(\zeta(s^{-1})(t)) = t\]
for all $s \in S_Z^{\pm 1}$ and $t \in S_K$.
\end{lemma}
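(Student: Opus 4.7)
The plan is to mirror the proof of Lemma \ref{lemma:computeraction11} but with the much smaller alphabet $S_Z = \{\Mul{x_1}{y},\ldots,\Mul{x_n}{y}\}$ in place of $S_A$. First I would observe that both $\zeta(s)\circ\zeta(s^{-1})$ and the identity are endomorphisms of $\Gamma_n$, so it suffices to verify the equality $\zeta(s)(\zeta(s^{-1})(t)) = t$ modulo the relations $R_K^0$ (and their $\phi$--translates, already seen to hold in $\Gamma_n$) for $t$ ranging over a generating set. Invoking Lemma \ref{le:Gammagenset} lets us restrict $t$ to the conjugation moves $\kCon{x_a}{y}, \kCon{y}{x_a}$ together with a single representative $\kMulcomm{x_a^{\alpha}}{y^{\epsilon}}{x_b^{\beta}}$ for each unordered pair $\{a,b\}$, which cuts down the number of cases substantially.

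Next I would reduce to the case $s = \Mul{x_a}{y}$ rather than handling $s^{\pm 1}$ separately. This is the analogue of the trick in Lemma \ref{lemma:computeraction11} where one only needed to verify the case $s = \Mul{x_a}{x_b}$: here the defining formulas in Table \ref{table:definephi} for $\phi(\Mul{x_a}{y}^{\epsilon})$ depend on $\epsilon$ only through a sign, so a single verification of $\phi(\Mul{x_a}{y})(\phi(\Mul{x_a}{y}^{-1})(t)) = t$ in $\Gamma_n$ automatically yields the reverse composition upon swapping the role of $\epsilon$.

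The verification itself is then a finite check, one case per generator in the reduced generating set. For each such $t$, one writes out $\phi(\Mul{x_a}{y}^{-1})(t) \in F(S_K)$ using Table \ref{table:definephi}, applies $\phi(\Mul{x_a}{y})$ term-by-term, and reduces the resulting word to $t$ in $\Gamma_n$ using the relations $R_K^0$ together with the auxiliary relations produced by \verb+exkrel+. This is the kind of routine but tedious symbolic manipulation that the GAP framework of \S \ref{section:computerdisc} is designed to automate, so I would package the whole check as a list \verb+phiZinverselist+ whose entries are witness factorizations, each evaluating to the empty word under \verb+pw+ and \verb+applyrels+.

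The main obstacle, such as it is, is purely bookkeeping: the commutator transvections $\kMulcomm{x_a^{\alpha}}{y^{\epsilon}}{x_b^{\beta}}$ have four kinds of interaction with $\Mul{x_a}{y}^{\pm 1}$ depending on whether $x_a$ occurs in the ``outer'' slot, the inner commutator, both, or neither, and each kind produces a different expansion in Table \ref{table:definephi}. Some of these (the ``bad'' cases where $x_a$ is the outer letter) produce long words whose reduction back to $t$ requires multiple applications of R4--R10 and of the extended relations, and it is here that the computer check does the real work. Because all cases are of bounded complexity and the code paralleling that of Lemma \ref{lemma:computeraction11} applies essentially verbatim, I do not expect any conceptual difficulty beyond ensuring the case list is exhaustive.
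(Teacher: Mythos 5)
Your proposal matches the paper's proof in substance: the paper reduces the check to the generating set from Lemma~\ref{le:Gammagenset}, verifies $\phi(s)(\phi(s^{-1})(t))=t$ in $\Gamma_n$ for $s$ of the form $\Mul{x_a}{y}$, and delegates the tedium to the list \texttt{phiZinverselist}, exactly as you describe. The one slight wrinkle is your justification for why the verification for a single sign of $s$ suffices: the paper's analogous reduction in Lemma~\ref{lemma:computeraction11} substitutes $x_a \mapsto x_a^{-1}$ (there being no such substitution available here since $S_Z$ contains only positive transvections), whereas you instead observe that the defining formulas in Table~\ref{table:definephi} and the relations in Table~\ref{table:basicrelsbkerian} are all parameterized uniformly by $\epsilon \in \{\pm 1\}$, so a generic-in-$\epsilon$ reduction of $\phi(\Mul{x_a}{y}^{\epsilon})(\phi(\Mul{x_a}{y}^{-\epsilon})(t))$ to $t$ yields both $\zeta(s)\circ\zeta(s^{-1}) = \mathrm{id}$ and $\zeta(s^{-1})\circ\zeta(s) = \mathrm{id}$ by specializing $\epsilon = \pm 1$. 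This is a sound and arguably cleaner way to phrase the reduction than the paper gives (the paper is silent on how the inverse case is handled here), but it does rest on the verification actually being carried out with $\epsilon$ treated as a formal sign rather than specialized; that is worth making explicit when writing the code.
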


\begin{proof}
This is like the proof of Lemma~\ref{lemma:computeraction11}, but simpler.
We verify that $\phi(s)(\phi(s^{-1})(t)=t$ in $\Gamma_n$ for $s$ of the form $\kMul{x_a}{y}$,  for  $t$ ranging over a generating set for $\Gamma_n$.
This computation is in the code generating the list \verb+phiZinverselist+.
\end{proof}

\begin{lemma}
\label{lemma:computeraction22}
The map $\eta\co F(S_Z) \rightarrow \Aut(\Gamma_n)$
induced by $\phi\co S_Q^{\pm 1} \rightarrow \End(F(S_K))$
satisfies
\[\eta(r)(t)=t,\]
whenever $r$ is a basic commutator of generators from $S_Z$ and $t\in S_K$.
\end{lemma}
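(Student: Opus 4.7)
The plan is to mirror the proof of Lemma~\ref{lemma:computeraction12} almost verbatim, only for the much simpler monoid $(S_Z^{\pm1})^\ast$ in place of $(S_A^{\pm1})^\ast$. Since $S_Z = \{\Mul{x_1}{y},\ldots,\Mul{x_n}{y}\}$, the only basic commutators are $r = [\Mul{x_a}{y},\Mul{x_b}{y}]$ with $1\leq a<b\leq n$, so there are only a handful of relations to check (and by the obvious symmetry swapping $x_a$ and $x_b$, essentially only one configuration up to relabeling).

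First I would observe that, since $\zeta(s)$ is an endomorphism of $\Gamma_n$ for each $s\in S_Z^{\pm1}$ by Lemma~\ref{lemma:computeraction21}, so is $\eta(r)$, and therefore the identity $\eta(r)(t)=t$ needs only be verified on a generating set for $\Gamma_n$. Invoking Lemma~\ref{le:Gammagenset}, it is enough to verify the identity on the conjugation generators $\kCon{x_a}{y}$ and $\kCon{y}{x_a}$ together with one commutator transvection $\kMulcomm{x_c^\alpha}{y^\epsilon}{x_d^\beta}$ for each pair $c\neq d$.

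Next, for each basic commutator $r=[\Mul{x_a}{y},\Mul{x_b}{y}]$ and each generator $t$ from this reduced generating set, I would compute $\phi(r)(t)$ by applying the formulas from Table~\ref{table:definephi} in sequence (four applications of $\phi$ per commutator), and then reduce $\phi(r)(t)\cdot t^{-1}$ to the trivial word modulo the relations $R_K^0$ (and the derived relations output by \verb+exkrel+). This is exactly the style of computation carried out in Lemma~\ref{lemma:computeraction12} for the Nielsen relations of types N3--N5, and it will be delegated to the GAP framework described in \S\ref{section:computerdisc}; the output list (call it \verb+phiZcommutatorlist+) should consist entirely of trivial words.

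The main obstacle, as in the proof of Lemma~\ref{lemma:computeraction12}, is not any conceptual difficulty but rather the combinatorial bookkeeping: the substitution rules for $\phi(\Mul{x_a}{y}^{\pm1})$ expand each generator into a word of length up to four in $F(S_K)$, so $\phi(r)(t)$ may have length on the order of several dozen letters and must be untangled using the commutation relations R1--R3 together with the conjugation relations R4--R6 (and occasionally R7). Fortunately, the cases split cleanly according to whether the index of $t$ coincides with $a$ or $b$, and in each case the word reduces by a small, predictable sequence of relation insertions — the same strategy used successfully in Lemmas~\ref{lemma:computeraction11} and~\ref{lemma:computeraction12}.
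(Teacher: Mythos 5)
Your proposal is correct and matches the paper's approach: the paper's proof is simply a pointer to the GAP computation (in the list \verb+phiznlist+), and the methodological scaffolding you supply — reducing to the smaller generating set of Lemma~\ref{le:Gammagenset}, exploiting the $a\leftrightarrow b$ symmetry, and reducing $\phi(r)(t)t^{-1}$ modulo $R_K^0$ and the derived relations — is exactly the framework the paper uses for its other computational lemmas (\ref{lemma:computeraction11}, \ref{lemma:computeraction12}, \ref{lemma:computeraction21}).
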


\begin{proof}
The computations showing this appear in \verb+phiznlist+.
\end{proof}

\begin{lemma}
\label{lemma:computerrelationsz}
For $r$ of the form $ss^{-1}$ for $s\in S_Z$, or $[s,t]$ for $s,t\in S_Z$, 
we have 
\[\tlambda_1(f,r)=1\]
in $\Gamma_n$ for any $f\in S_A^{\pm1}$.
\end{lemma}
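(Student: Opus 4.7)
The plan is to treat this as another computer verification in the same spirit as Lemmas~\ref{lemma:computeraction11}--\ref{lemma:computeraction22}. For each $f \in S_A^{\pm 1}$ and each word $r$ of the specified form I would compute $\tlambda_1(f,r) \in F(S_K)$ from the recursive definition of $\tlambda_1$ given in the proof of Lemma~\ref{lemma:twistedbilinear} (the function \verb+lambda+ in the code), and then verify that the resulting word reduces to the identity in $\Gamma_n$ by applying a scripted sequence of $R_K^0$-relations and their images under the monoid generated by $\phi(S_Q^{\pm 1})$, via \verb+applyrels+. The outputs will be collected in a list of trivial words analogous to \verb+phiznlist+, say \verb+lambdaZrelationslist+.

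Spelling out the reductions: for $r = s s^{-1}$ with $s \in S_Z$, one application of Claim~\ref{claim:one} gives
\[\tlambda_1(f, s s^{-1}) = \tlambda_1(f,s) \cdot \BK{\AB{\hf}{\hs}}{\tlambda_1(f,s^{-1})}.\]
For $r = [s,t] = s t s^{-1} t^{-1}$, three applications of Claim~\ref{claim:one} expand $\tlambda_1(f,r)$ into a product of four conjugated values of the form $\BK{\,\cdot\,}{\tlambda_1(f,\,\cdot\,)}$, each factor read off from Table~\ref{table:lambdadef} with the $\beta$-conjugation computed via the action constructed in Lemma~\ref{lemma:action2}. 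In many cases Table~\ref{table:lambdadef} assigns trivial values to the relevant pairs $(f,s),(f,t)$ and the expression collapses automatically; in the remaining cases one must produce an explicit reduction using the basic relations.

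The case count can be cut down by the usual symmetry arguments. Swaps and inversions act on the formulas by relabelling the subscripts of the generators (exactly as in the proof of Lemma~\ref{lemma:computeraction12}), so it suffices to check representatives up to this relabelling. Moreover, conjugating by an inversion relates the transvection cases $\Mul{x_a^{\alpha}}{x_b}^{\beta}$ for different sign patterns, so effectively one need only treat $\alpha = \beta = 1$ explicitly. The main obstacle will be bookkeeping in the transvection cases where the index $a$ appearing in $f = \Mul{x_a^{\alpha}}{x_b}^{\beta}$ coincides with the index of $s$ or $t$, since only then do the $\beta$-actions produce nontrivial conjugations that must be cancelled against the other factors using several relations from Table~\ref{table:basicrelsbkerian} in sequence. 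As emphasised in the outline, if any case failed to reduce this would indicate a missing relation in $R_K^0$; successful termination of every entry in \verb+lambdaZrelationslist+ at the empty word establishes the lemma.
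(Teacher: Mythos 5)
Your proposal matches the paper's approach: both expand $\tlambda_1(f,r)$ via the recursive definition (equivalently, repeated application of Claim~\ref{claim:one}) without simplifying $r$, then verify by computer (using the encoded \texttt{lambda} function and scripted \texttt{applyrels} reductions) that each resulting word in $F(S_K)$ is a relation in $\Gamma_n$. The only cosmetic difference is bookkeeping of output lists and your extra commentary on symmetry reductions; the substance is identical.
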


\begin{proof}
The meaning here is that we must expand $\tlambda_1(f,r)$ according the definition without simplifying $r$ (not even cancelling inverse pairs), and then verify that the expression we get is a relation in $\Gamma_n$.
To check the lemma, it is enough to verify that the function \verb+lambda+ returns relations in $\Gamma_n$ when the first input is a generator or inverse generator and the second input is commutator or the product of a generator and its inverse.
The computations for this lemma this appear in \verb+lambda2ndinverselist+ and \verb+lambda2ndrellist+.
\end{proof}

\begin{lemma}
\label{lemma:computertb3}
We have
\[\lambda_1(f,z) \cdot \BK{\AB{\hf}{z}}{\AK{\hf}{t}} \cdot \lambda_1(f,z)^{-1} = \AK{\hf}{\BK{z}{t}}\]
for $f \in S_A$, $z \in S_Z$, and $t \in \Gamma_n$.
\end{lemma}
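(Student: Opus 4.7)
The strategy is to reduce the identity to a finite word-level computation by observing that both sides are homomorphisms in $t$, and then cutting down the $(f,z)$ cases by symmetry so that only a short case list remains for the computer.

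For fixed $f \in S_A$ and $z \in S_Z$, both sides define endomorphisms of $\Gamma_n$ in the variable $t$. The right-hand side is the composition $\alpha_{\hf} \circ \beta_z$, which is an automorphism by Lemmas \ref{lemma:action1} and \ref{lemma:action2}. The left-hand side is obtained from the endomorphism $\beta_{\AB{\hf}{z}} \circ \alpha_{\hf}$ (extended from single-generator actions to words in $S_Z^{\pm 1}$ along the lines of Claim \ref{claim:one}) by conjugation with the fixed element $\lambda_1(f,z) \in \Gamma_n$, and conjugation is itself an endomorphism. Hence it suffices to verify the identity as $t$ ranges over a generating set for $\Gamma_n$; by Lemma \ref{le:Gammagenset} we may take a generating set that contains only one commutator transvection $\kMulcomm{x_a^\alpha}{y^\epsilon}{x_b^\beta}$ for each unordered pair $\{a,b\}$, together with the $\kCon{y}{x_a}$ and $\kCon{x_a}{y}$.

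Next I would cut down the $(f,z)$ parameters. Since $z$ is always of the form $\Mul{x_a}{y}$, the only parameter on $z$ is the index $a$. For $f$ a swap or inversion the values of $\lambda_1(f,z)$ in Table~\ref{table:lambdadef} are trivial except when $f = \Inv{a}$, and in that exceptional case the identity reduces, after unfolding the definitions, to the fact that $\phi(\Inv{a})$ acts on $S_K$ by the natural relabelling of subscripts. For $f$ a transvection, conjugation by $\Inv{b}$ intertwines $\Mul{x_b}{x_c}$ with $\Mul{x_b^{-1}}{x_c}$ and transports the $\alpha$- and $\beta$-actions consistently, so it suffices to treat the four transvection configurations recorded in Table~\ref{table:lambdadef}.

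The remaining check is a finite case analysis based on how the subscripts of $z$ and $f$ overlap and on the form of the generator $t$. For each configuration I would expand both sides using Table~\ref{table:definephi} for $\phi$ and Table~\ref{table:lambdadef} for $\lambda_1$, first rewriting the twisted action $\BK{\AB{\hf}{z}}{\cdot}$ as a composition of single-generator $\beta$-actions via the TB1-style expansion formula, and then reduce the resulting word in $F(S_K)$ to the empty word modulo $R_K^0$ and its derived consequences. I expect the main obstacle to be bookkeeping rather than any isolated algebraic identity; the crucial subtlety is that $\AB{\hf}{z}$ is generally a word of length greater than one in $S_Z^{\pm 1}$, so it must be expanded before the left-hand side becomes a literal word-level computation. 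These verifications are handled by the computer and recorded as a list of reductions to the trivial word of $F(S_K)$ in \texttt{BirmanIA.g}, in direct analogy with the lists already cited in Lemmas \ref{lemma:computeraction11}--\ref{lemma:computerrelationsz}.
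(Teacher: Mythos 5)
Your proposal matches the paper's proof: both fix $(f,z)$, observe that both sides of the identity are (endo/auto)morphisms of $\Gamma_n$ in the variable $t$, reduce the check to a generating set of $\Gamma_n$, and hand the finite verification to the computer (the paper's list \texttt{tb3list}). Your extra symmetry reductions on the $(f,z)$ pairs are a sensible optimization but not used in the paper, which simply runs the computer over all choices of $f$, $z$, and $t$.
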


\begin{proof}
The function \verb+psi+ takes as input a word $a$ in $S_A$ and a word $b$ in $S_Z$ and returns a word in $S_Z$ representing $\AB{a}{b}$.
Since the actions $\alpha$ and $\beta$ are given by $\phi$, we may rewrite the expression we are trying to prove as
\[
\lambda_1(f,z)\cdot \phi(\psi(f)(z))\circ\phi(f)(t) \cdot \lambda_1(f,z)^{-1} = \phi(f)\circ \phi(z)(t).
\]

We note that this equation is an automorphism of $\Gamma_n$ on both sides, so it is enough to verify it for $t$ in a generating set.
Computations checking this identity for all choices of $f$, $z$ and $t$ appear in the code generating \verb+tb3list+.
\end{proof}

\begin{lemma}
\label{lemma:computerrelationsaut}
Suppose $r\in (S_A)^{\ast}$ is one of Nielsen's relations for $\Aut(F_n)$ or is a product $ff^{-1}$ for some $f\in S_A^{\pm1}$.
Then for any $z\in \Z^n$, we have
\[\lambda_2(r,z)=1\]
in $\Gamma_n$.
\end{lemma}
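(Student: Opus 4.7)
The plan is to combine a theoretical reduction with a finite computer verification, following the template already used in Lemma~\ref{lemma:computerrelationsz} and mirroring the argument sketched in Claim~\ref{claim:seven}. First I would reduce to a finite check at generators of $\Z^n$. Since $S_Z = \{\Mul{x_1}{y},\ldots,\Mul{x_n}{y}\}$ generates $\Z^n$, Claim~\ref{claim:six} implies that if $\tlambda_2(r,\hs)=1$ holds in $\Gamma_n$ for every $s\in S_Z^{\pm 1}$, then $\tlambda_2(r,z)=1$ for every $z\in\Z^n$: writing $z=\widehat{s_1\cdots s_k}$ with $s_i\in S_Z^{\pm 1}$, each iterated application of Claim~\ref{claim:six} contributes only factors of the form $\BK{\cdots}{\tlambda_2(r,\widehat{s_i})}=\BK{\cdots}{1}=1$. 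Thus the lemma reduces to finitely many identities indexed by pairs $(r,s)$, with $r$ running over the Nielsen relations $R_A$ of Table~\ref{table:autfnpresentation} together with the trivial products $ff^{-1}$ for $f\in S_A^{\pm 1}$, and $s\in S_Z^{\pm 1}$.

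Second, for each such pair $(r,s)$, I would unfold $\tlambda_2(r,\hs)$ via the inductive definition. Writing $r=f_1\cdots f_\ell$ with $f_i\in S_A^{\pm 1}$, repeated application of the defining rule expresses $\tlambda_2(r,\hs)$ as an explicit product of terms of the form $\AK{\hat f_{i_1}}{\cdots\AK{\hat f_{i_m}}{\lambda_1(f_j,\AB{\hat f_{j+1}\cdots \hat f_\ell}{\hs})}\cdots}$. Each innermost value $\lambda_1(f_j,\cdot)$ is read off from Table~\ref{table:lambdadef}, augmented for the inverse generators $s\in S_Z^{-1}$ via the identity $\tlambda_1(f,v^{-1})=\BK{\AB{\hf}{\hv^{-1}}}{\tlambda_1(f,v)^{-1}}$ established in the proof of Claim~\ref{claim:three}. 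Each outer action $\alpha_{\hat f}$ is then realized concretely as $\phi(f)$ by Lemma~\ref{lemma:action1}, and each $\beta$-action buried in a $\lambda_1$-output is realized as $\phi$ of the corresponding element of $S_Z$. This produces a completely explicit word in $F(S_K)$ that we must verify reduces to the empty word in $\Gamma_n$.

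Third, I would carry out the reduction in the GAP framework of \S\ref{section:computerdisc}: implement $\tlambda_2$ as a recursive function built on the already-coded \verb|lambda| and \verb|phi|, enumerate the finite list of $(r,s)$ pairs, compute each unfolded word, and reduce it using \verb|krel|, its images under \verb|phi(w)| for $w$ ranging over $(S_Q^{\pm 1})^\ast$, and the derived relations \verb|exkrel|, producing an output list analogous to \verb|lambda2ndrellist| in which every entry evaluates to the empty word.

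The main obstacle will be the combinatorial bulk of the verifications for the longer Nielsen relations, especially N3--N5, since each application of $\phi$ of a transvection can replace a single letter of $S_K$ by a word of length up to roughly eight (Table~\ref{table:definephi}), so the nested unfoldings of $\tlambda_2$ grow rapidly with $\ell$. As in Lemmas~\ref{lemma:computeraction12} and~\ref{lemma:computertb3}, I would keep the computation tractable by exploiting the natural equivariance of the whole setup under the swaps $\Swap{a}{b}$ and inversions $\Inv{a}$ to collapse each Nielsen relation to a handful of symmetry classes, and by bootstrapping the $\Mul{x_a^{-1}}{x_b}$ cases from the $\Mul{x_a}{x_b}$ ones via conjugation by an inversion. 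With these reductions in place, the remaining verification list is finite and computer-checkable.
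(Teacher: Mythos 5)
Your proposal matches the paper's approach: the paper's (very terse) proof also reduces to the finitely many cases $z\in S_Z^{\pm 1}$ and then invokes the GAP function \verb+lambda+ to verify each $(r,s)$ pair (via the output lists \verb+lambda1stinversecheck+ and \verb+lambdaN1list+ through \verb+lambdaN5list+), and the reduction to generators is justified by Claim~\ref{claim:six} exactly as you describe (the paper carries out that step explicitly where the lemma is used, in the proof of Claim~\ref{claim:seven}). Your elaborations of the recursive unfolding of $\tlambda_2$, the use of Table~\ref{table:lambdadef}, and the symmetry reductions are all consistent with what the paper's code actually does.
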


\begin{proof}
It is enough to show this for generators of $\Z^n$.
We show this using the function \verb+lambda+ that encodes the definition of $\lambda_2$.
The code checking these identities generates the lists \verb+lambda1stinversecheck+, \verb+lambdaN1list+, \verb+lambdaN2list+, \verb+lambdaN3list+, \verb+lambdaN4list+, and \verb+lambdaN5list+.
\end{proof}

\begin{lemma}
\label{lemma:computerpenultimatecalc}
The basic relations $R_K^0$ of the L-presentation for $\Gamma_n$ are true when interpreted as identities in $\BKerIA{n}{1}$.
\end{lemma}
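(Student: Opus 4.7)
The plan is to verify each of the ten relation families $R1$--$R10$ in Table~\ref{table:basicrelsbkerian} directly, by interpreting the symbols $\kCon{z}{z'}$ and $\kMulcomm{z^\alpha}{y^\epsilon}{z'^{\beta}}$ as the corresponding elements $\Con{z}{z'}, \Mul{z^\alpha}{[y^\epsilon,z'^\beta]}$ of $\Aut(F_{n,1})$ and checking the equality of the two sides as automorphisms. Since every element of $\BKerIA{n}{1}\subset\Aut(F_{n,1})$ is determined by its action on the basis $\{x_1,\dotsc,x_n,y\}$, each relation reduces to a finite number of equalities between words in $F_{n,1}$. This is the natural route because both sides of every relation in $R_K^0$ are built from an explicitly prescribed list of Nielsen-type generators whose action on the basis is written out in the introduction.

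First I would implement both sides of each relation as a composition of automorphisms in GAP, then evaluate them on each of $x_1,\dotsc,x_n,y$, and compare the resulting reduced words. Because the relations only ever involve a bounded number of distinct indices from $\{1,\dotsc,n\}$ (at most four in $R2$, three elsewhere), it suffices to perform the check inside a fixed free group of sufficient rank, exactly as in the framework set up in \S\ref{section:computerdisc}. For each relation one must loop over the choices of sign parameters $\alpha,\beta,\gamma,\delta,\epsilon\in\{1,-1\}$ and over admissible configurations of the index subscripts, but the number of configurations in each family is finite and small.

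The main structural observation that makes the check tractable is that the simple relations $R1$--$R3$ are just commutation statements which follow after a short calculation (distinct transvections/conjugations with disjoint support commute, and even the overlap cases reduce to an evaluation on the affected basis element). The conjugation relations $R4$--$R6$ follow from a one-line computation on the single basis element whose image is nontrivial under $\kMulcomm{x_a^\alpha}{y^\epsilon}{x_b^\beta}$, since $\Con{y}{x_b}$ and $\Con{x_a}{y}$ act by conjugation only on $y$ and $x_a$ respectively, which converts the commutator $[y^\epsilon,x_b^\beta]$ into $[x_b^{-\beta},y^\epsilon]$ etc. Relations $R7$--$R10$ are less transparent; they encode three-term Jacobi-like identities among commutators $[y,x_b]$, $[y,x_c]$ and their conjugates, and are precisely the identities one needs to normalize products of generators into a standard form.

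The main obstacle will be simply the bulk of cases in $R9$ and $R10$, where the right-hand sides are products of seven terms and must be checked for all sign choices. However, there is no conceptual difficulty: the verification for each individual choice is a mechanical substitution of Nielsen's definitions followed by free reduction. Accordingly, the proof is best carried out by a machine calculation, which I would package as a single GAP routine that loops through all relations in Table~\ref{table:basicrelsbkerian} and all admissible parameter values and evaluates both sides on each basis element of $F_{n,1}$. The output is the list of equalities of reduced words, which should all be tautological; a unified checklist (analogous to the other \verb+...list+ objects in \S\ref{section:computercalc}) records these outputs and certifies the lemma.
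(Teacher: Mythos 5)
Your approach — interpreting the $S_K$-symbols as the corresponding automorphisms of $F_{n,1}$, evaluating each side of every relation in $R_K^0$ on the free basis, and mechanizing the check over all parameter configurations (generic and degenerate) inside GAP within the framework of \S\ref{section:computerdisc} — is exactly what the paper does; its proof simply points to the GAP verification list \verb+verifyGammarellist+, generated via the function \verb+krel+. Your added commentary on why $R1$--$R6$ reduce to short local computations and why $R7$--$R10$ encode Jacobi-type identities is correct supplementary intuition but not a deviation in method.
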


\begin{proof}
This is verified in the list \verb+verifyGammarellist+, which uses the function \verb+krel+ to generate the relations.
All generic and non-generic forms of the relations are checked separately.
\end{proof}

\begin{lemma}
\label{lemma:computerfinalcalc}
The relations from Jensen--Wahl's presentation for $\AutFB{n}{1}$ map to relations of $\Delta_n$ under the map $\widetilde{\Phi}^{-1}$, so the map as defined on generators extends to a well defined homomorphism $\Phi^{-1}\co \AutFB{n}{1}\to \Delta_n$.
\end{lemma}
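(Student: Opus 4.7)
The plan is to check, for each relation $r$ in Jensen--Wahl's presentation $\Pres{S_C}{R_C}$ of $\AutFB{n}{1}$, that $\widetilde{\Phi}^{-1}(r) = 1$ in $\Delta_n$. I would organize the verification according to the five families Q1--Q5 in Table~\ref{table:an1presentation}, using the three kinds of relations in $\Delta_n$ collected at the end of \S\ref{section:lpresentationproof}: relations in $\Gamma_n$ itself (in particular those of the L-presentation); relations that hold in the internal subgroups $\Gamma_n \rtimes \Aut(F_n)$ and $\Gamma_n \rtimes \Z^n$; and the twisted commutator identity $\lambda(f,z) = \iota_1(f)\iota_2(z)\iota_1(f)^{-1}\iota_2(\AB{f}{z})^{-1}$.

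For Q1 (Nielsen's relations among $S_A$), the map $\widetilde{\Phi}^{-1}$ restricted to $S_A$ factors through the homomorphism $\iota_1$, so these relations hold in $\Delta_n$ automatically. Similarly, the portions of Q2 and Q3 that live entirely inside $\iota_2(\Z^n)$ (those commuting relations among generators $\Mul{x_a}{y}$ with positive signs), or entirely inside $\Gamma_n \rtimes \Aut(F_n)$ (the action of swaps and inversions on $\kCon{y}{x_a}$, using the action from Lemma~\ref{lemma:action1}), are handled by the corresponding semidirect product structures.

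The substantive content lies in the relations that mix all three subgroups: the parts of Q2 and Q3 involving $\Con{y}{x_a}$ or $\Mul{x_a^{\alpha}}{y}$ together with transvections in $S_A$, as well as Q4 and Q5. For each such relation I would translate it into $\Delta_n$ via $\widetilde{\Phi}^{-1}$, paying particular attention to the non-trivial assignment $\widetilde{\Phi}^{-1}(\Mul{x_a^{-1}}{y}) = \kCon{x_a}{y}\kMul{x_a}{y}^{-1}$, and then rewrite every subword of the form $\iota_1(f)\iota_2(z)\iota_1(f)^{-1}$ as $\lambda(f,z)\,\iota_2(\AB{f}{z})$ using the twisted commutator identity, with $\lambda$ read off from Table~\ref{table:lambdadef}. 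After this rewriting, each relation reduces to an equality of words in $\Gamma_n$ (multiplied by a predictable factor from $\iota_2(\Z^n)$), which I would verify using the L-presentation relations from Table~\ref{table:basicrelsbkerian} and the already-established calculations, in particular Lemma~\ref{lemma:computerpenultimatecalc}.

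The main obstacle is the combinatorial volume once signs, orderings, and coincidences of subscripts are all tracked: each generic Q-relation splits into many specializations, and the expansions of Q2, Q4 and Q5 where $\Mul{x_a^{-1}}{y}$ appears produce long words requiring substantial simplification in $\Gamma_n$. Following the pattern established by the preceding lemmas in \S\ref{section:computercalc}, I would execute all of these reductions inside the GAP framework of \S\ref{section:computerdisc}, producing for each relation $r \in R_C$ a finite list whose entries are the words obtained by applying the three kinds of $\Delta_n$-relations to $\widetilde{\Phi}^{-1}(r)$; the proof concludes by observing that every such list consists entirely of the trivial word.
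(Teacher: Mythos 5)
Your proposal is correct and follows essentially the same approach as the paper: the paper's proof of this lemma is simply a citation of the GAP computation in the list \verb+JWfromDeltalist+, and your plan correctly reconstructs the structure of that verification. You rightly identify the three available sources of relations in $\Delta_n$ (the $R_K^0$ relations, relations internal to $\Gamma_n \rtimes \Aut(F_n)$ and $\Gamma_n \rtimes \Z^n$, and the twisted commutator identity for $\lambda$), you correctly note that Q1 and parts of Q2--Q3 hold for structural reasons while the mixed relations require the twisted bilinear map and Table~\ref{table:lambdadef}, and you correctly flag the assignment $\widetilde{\Phi}^{-1}(\Mul{x_a^{-1}}{y}) = \kCon{x_a}{y}\kMul{x_a}{y}^{-1}$ as a source of additional work in the expansions of Q2, Q4, and Q5. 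Since the actual content of the lemma is the machine check of all these case reductions, your description matches what the paper does.
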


\begin{proof}
This is verified in the list \verb+JWfromDeltalist+.
\end{proof}

\noindent
\begin{tabular*}{\linewidth}[t]{@{}p{\widthof{Department of Mathematical Sciences, 309 SCEN}+0.50in}@{}p{\linewidth - \widthof{Department of Mathematical Sciences, 309 SCEN} - 0.50in}@{}}
{\raggedright
Matthew Day\\
Department of Mathematical Sciences, 309 SCEN\\
University of Arkansas\\
Fayetteville, AR 72701\\
E-mail: {\tt matthewd@uark.edu}}
&
{\raggedright
Andrew Putman\\
Department of Mathematics\\
Rice University, MS 136 \\
6100 Main St.\\
Houston, TX 77005\\
E-mail: {\tt andyp@math.rice.edu}}
\end{tabular*}

\end{document}